\numberwithin{equation}{section}
\newtheorem{theorem}{Theorem}[section]
\newtheorem{conj}[theorem]{Conjecture}
\newtheorem{remark}[theorem]{Remark}
\newtheorem{proposition}[theorem]{Proposition}
\newtheorem{prop}[theorem]{Proposition}
\newtheorem{lemma}[theorem]{Lemma}
\theoremstyle{definition}
\newtheorem{defi}{Definition}[section]
\newtheorem{rem}{Remark}[section]
\newcommand\half{\frac{1}{2}}
\DeclareMathOperator{\ad}{ad}
\newcommand\be{\beta}
\newcommand\g{\mathfrak g}
\newcommand\D{\Delta}
\renewcommand\l{\lambda}
\renewcommand\d{\delta}
\renewcommand\a{\alpha}
\renewcommand\aa{\mathfrak a}
\newcommand{\Z}{\mathbb Z}
\newcommand\s{\sigma}
\renewcommand\aa{\mathfrak a}
\newcommand\C{\mathbb C}
\newcommand\R{\mathbb R}
\newcommand{\fg}{\mathfrak{g}}
\newcommand{\ZZ}{\mathbb{Z}}
\newcommand{\vac}{{\bf 1}}
\newcommand{\bea}{\begin{eqnarray}}
\newcommand{\eea}{\end{eqnarray}}
\begin{document}

\title[ Conformal embeddings for $W$--algebras]{ New approaches  for studying  conformal embeddings and collapsing levels for $W$--algebras}
%: hook and rectangular type $\mathcal W$--algebra cases }

\author[Adamovi\'c, M\"oseneder, Papi]{Dra{\v z}en~Adamovi\'c}
\author[]{Pierluigi M\"oseneder Frajria}
\author[]{Paolo  Papi}
 \begin{abstract}
 
 %In this paper we extend our methods of detecting conformal embeddings in affine $\mathcal W$--algebras. 
 In this paper  we prove a general result saying that  under  certain hypothesis an embedding of an affine vertex algebra into an  affine $W$--algebra is conformal if and only if their central charges coincide. This result extends our previous result obtained in the case of minimal affine $W$-algebras  \cite{AKMPP-JA}. We also find a sufficient condition showing that certain conformal levels are collapsing.    
 This new condition enables us  to  find some levels $k$ where  $W_k(sl(N), x, f )$  collapses to its affine part when  $f$ is of   hook or rectangular type. Our methods can be applied to non-admissible levels.
  In particular, we prove  Creutzig's conjecture \cite{TC} on  the conformal embedding  in  the hook type $W$-algebra $W_k(sl(n+m), x, f_{m,n})$ of its affine vertex subalgebra.
  
  Quite surprisingly, the  problem of showing that certain conformal levels are not collapsing turns out to be very difficult.   In the cases when $k$ is admissible and conformal,  we prove that $W_k(sl(n+m), x, f_{m,n})$ is not collapsing.
  Then, by generalizing the results on semi-simplicity of conformal embeddings from \cite{AKMPP}, \cite{AKMPP-JJM}, we  find many cases in which
$W_k(sl(n+m), x, f_{m,n})$ is semi-simple as a module for its affine subalgebra at conformal level and we provide   explicit decompositions.
   
  \end{abstract}
\keywords{vertex algebras, $W$-algebras , conformal embeddings}
\subjclass[2010]{Primary    17B69; Secondary 17B20, 17B65}
\date{\today}

\maketitle
\section{Introduction}
An embedding $i: U\to V$ of a vertex operator algebra $(U,\omega')$ into a vertex operator algebra $(V,\omega)$ is called conformal if $i(\omega')=\omega$.
This  definition is a natural generalization of a notion  which was popular in physics literature in the mid 1980s, due to its relevance for string compactifications.
%, which involved comparing the Sugawara central charges of the affinizations of a pair $(\g^0,\g)$, consisting of a
%semisimple finite dimensional Lie algebra $\g$ and a reductive quadratic subalgebra, when acting on a level 1 integrable modules. 
In a series of  papers  \cite{AKMPP,AKMPP-Selecta, AKMPP-JJM, AMPP-Advances} we studied  conformal embeddings associated to affine vertex algebras $V^{k'}(\g'),V^k(\g)$ where $\g$ is a  basic Lie superalgebra (cf. \S \ref{1.1}) and $\g'$ ranges over a suitable class of Lie subsuperalgebras.
In \cite{AKMPP-JA} we initiated the study of conformal embeddings of affine vertex algebras in minimal affine $W$--algebras. \par  In this paper we study  embeddings into more general affine $W$--algebras which are not of minimal type, as an application of an abstract criterion working for conformal vertex algebras (cf. Definition \ref{svoa}) with assumptions on strong generators of  weight $2$.\par  Affine $W$--algebras can be regarded as a far reaching generalization of the superconformal algebras arising in Conformal Field Theory.   They are the  vertex algebras
$W^k(\g,x,f),$ constructed in \cite{KRW}, \cite{KW} by quantum Hamiltonian reduction starting from  a datum $(\g,x,f)$ and $k\in\C$ (see \S 1.1). Two relevant features of the vertex algebras $W^k(\g,x,f)$ are the following: 
\begin{itemize}
\item $W^k(\g,x,f)$  contains an affine vertex algebra  $V^{\beta_k}(\g^\natural)$, where   $\g^\natural$ is the centralizer in $\g$ of an $sl_2$-triple containing $f$  and  $\beta_k$ is the bilinear  form on $\g^\natural$ defined in \cite[Theorem 2.1 (c)]{KW}  (cf. \S \ref{ava} for notation). 
\item $W^k(\g,x,f)$ admits a unique simple graded quotient $W_k(\g,x,f),$ when $k\ne -h^\vee$.
\end{itemize} 
%Moreover, when $f$ is a {\it minimal} nilpotent element, the OPE expansion of the product of strong generators for $W^k(\g,x,f)$  is known. \par 
An important and extensively studied instance  of this construction is the {\it minimal} case, i.e. $f=f_\theta$ is a root vector of a minimal even root $\theta$.
Motivated by our study of conformal embeddings,  we found in  \cite{AKMPP-JA}  that a minimal simple affine $W$-algebra at certain levels can collapse to its affine vertex algebra. This leads to introduce the following definition.
Denote by $\mathcal V(\g^\natural)$ the image of $V^{\beta_k}(\g^\natural)$ under the projection $W^k(\g,x,f)\to W_k(\g,x,f)$.   \begin{defi}\cite{AKMPP-JA} We say  that a level $k$ is  {\sl collapsing} if $\mathcal V(\g^\natural)= W_k(\g,x,f)$.\end{defi}
A complete classification of collapsing levels in the minimal case  was presented in \cite{AKMPP-JA}.   
Applications of collapsing levels  to  semi-simplicity of the category $KL_k$ for affine vertex algebras were presented in \cite{AKMPP-IMRN}, \cite{AMP-super}, \cite{APV21}. Moreover, at collapsing levels, one has the structure of a  vertex  tensor category \cite{CY}.

In the recent paper \cite{AEM}  T. Arakawa, J. van Ekeren, and A. Moreau   started  the study of collapsing levels in general affine $W$-algebras.  They focus on  the cases when $k$ and the level $k^\natural$ of $\mathcal V(\g^\natural)$ are both admissible. Admissibility implies the modularity of characters, which enables them to define the   asymptotic datum of a  representation. Then, if $f$ belongs to the associated variety of 
$V^k(\g)$ (a certain affine Poisson variety),  they provide a criterion for $k$  to be collapsing in terms of asymptotic data.  Although their method is not applicable if $k^{\natural}$ is not admissible, some of their results provide conjectures on the existence of collapsing levels beyond the admissible range. Moreover, 
in a recent physics paper on $4$d SCFT,  B. Li, D.  Xie  and W. Yan \cite{LXY}  study various quantities (central charges, flavor symmetry, Higgs
branch, etc) which  suggest the existence  of new collapsing levels. 

  In  the present paper we consider  conformal embeddings and collapsing levels associated to affine $W$--algebras for general, not necessarily admissible levels. 
  We are able to prove some conjectures of from \cite{AEM} and \cite{LXY} on collapsing levels. We focus on the following specific  topics.
\begin{itemize}
\item A criterion for conformal embeddings and collapsing levels.
\item Hook type $W$--algebras and rectangular $W$--algebras for $sl(N)$.
\item Conformal vs collapsing levels.
\item Decomposition of conformal embeddings: hook $W$--algebra case.
%\item Future work.
\end{itemize}
We now expand on each topic.
\subsection{A criterion for conformal embeddings and collapsing levels}\label{1.1} Let $\g$ be a basic Lie superalgebra, i.e.  a
simple finite--dimensional Lie superalgebra with a reductive even part and a non-zero even
invariant supersymmetric bilinear form $(. \, | \, .)$. Recall \cite{KW} that the universal affine
$W$-algebra   $W^k(\g,x,f)$ of central charge $c(\g, k)$ is  associated to the datum  $(\fg ,x,f)$, where  $\fg$ is a
basic Lie superalgebra, $x$ is an
$\ad$--diagonalizable element of $\fg$ with eigenvalues in
$\tfrac{1}{2}\ZZ$, $f$ is an even  nilpotent element of $\fg$ such that
$[x,f]=-f$ and the eigenvalues of $\ad x$ on the centralizer
$\fg^f$ of $f$ in $\fg$ are non-positive. We will also assume that the datum $(\g,x,f)$ is {\it Dynkin}, i.e. there is a $sl(2)$-triple $\{e,h,f\}$ and $x=\half h$. Let $\g^\natural$ be the centralizer of this  $sl(2)$-triple. 
%It turns out that $W^k(\g,x,f)$ contains   an affine vertex subalgebra $V(\g^\natural)$ and that $W^k(\g,x,f)$  has a a unique simple graded quotient 
% $W_k(\g,x,f)$  (at non critical level).
In \cite{AKMPP-JA} we proved in the minimal case  that  $\mathcal V(\g^{\natural})$  is  conformal if and only if 
\bea c_{{\g}^{\natural}}  = c(\g, k) = \frac{ k\, \mbox{sdim} \g}{k+ h^{\vee}} - 6 k + h^{\vee} -4,  \label{cc-min}\eea
where $c_{\g^{\natural}}  $ is the Sugawara central charge  of the affine vertex subalgebra $\mathcal V({\g}^{\natural}) \hookrightarrow W_k(\g, x,  f_{\theta})$ and $h^\vee$ is the dual Coxeter number of $\g$.
It is clear that (\ref{cc-min}) is a necessary condition  for a conformal embedding. But proving that this condition is sufficient was  highly non-trivial and it was a central part of  \cite{AKMPP-JA}. 
Recall from \cite{KW} that a minimal $W$-algebra $W_k(\g, x, f_{\theta})$  is strongly generated  by a Virasoro field $L$ and fields  $J^{\{u\}}, G^{\{v\}}$ of conformal weight $1,3/2$, respectively.
We proved that

\begin{enumerate}
\item a sufficient condition for the  embedding $\mathcal V({\g}^{\natural}) \hookrightarrow W_k(\g, x,  f_{\theta})$ to be conformal is that the  {\it Sugawara} conformal weight of the  fields $G^{\{v\}}$ is $\frac{3}{2}$; 
\item the previous condition implies that the conformal level $k $ is either $- \frac{2h ^{\vee} }{3}$ or $- \frac{h^{\vee}-1}{2}$. 
\end{enumerate}
 Statements (1) and (2) easily imply that (\ref{cc-min}) is   a sufficient condition for conformal embedding.

In the present paper we provide a generalization of criterion  (\ref{cc-min})  applying also to  non-minimal affine $W$--algebras.  Roughly speaking, we show that if one has enough knowledge about the weight two subspace of strong generators then one can check that the  embedding is  conformal by comparing central charges.  More precisely, we prove the following general result for a conformal vertex algebra $
W=\bigoplus\limits_{n\in{ \frac{1}{2}\mathbb Z_+}}W(n),
$ of central charge $c$ satisfying     the technical hypothesis  \eqref{h1}, \eqref{h2}, \eqref{h3} below. Let $L$ be the conformal vector of $W$ and $L^\aa$ that of the vertex algebra $V(\aa)$ attached to the Lie superalgebra $\aa=W(1)$ with $\l$-bracket \eqref{aff}. Note that $V(\aa)$ is an affine vertex algebra in the sense of \S\ref{ava}. In Theorem \ref{Criterion} we prove
\begin{theorem}\label{Criterion-intr} 
Assume that $W$ is strongly generated by $\mathfrak a=W(1)$ and by  $\{L-L^\aa\}\cup S$ with $S$ homogeneous  and such that
 $(L-L^{\mathfrak a})(2)X=0$ for $X\in S\cap W(2)$.
Then $(L-L^{\mathfrak a})$ generates a  proper ideal $I$ in $W$ if and only if $c=c_\aa$.  In other words,  $W/I$ is non-zero and the image of $V(\aa)$ in $W/I$  is conformally embedded  if and only if $c=c_\aa$.
\end{theorem}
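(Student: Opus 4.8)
The plan is to analyze $\varpi:=L-L^{\aa}$ and the ideal $I=\langle\varpi\rangle$ it generates, and to read off both implications from the single $\lambda$--bracket relation that records the central charge of $\varpi$. First I would run a Goddard--Kent--Olive type computation. Since $\aa=W(1)$ consists of conformal weight one vectors that are $L$--primary (so $[L_\lambda a]=(\partial+\lambda)a$), and $L^{\aa}$ is the Sugawara vector of $V(\aa)$, for which $\aa$ is again primary of weight one, subtracting the two $\lambda$--brackets gives $[\varpi_\lambda a]=0$ for all $a\in\aa$. Hence $\varpi$ $\lambda$--commutes with the sub-vertex-algebra $V(\aa)$ generated by $\aa$, in particular with $L^{\aa}$, and bilinearity of the $\lambda$--bracket forces
\[
[\varpi_\lambda\varpi]=[L_\lambda L]-[L^{\aa}_\lambda L^{\aa}]=(\partial+2\lambda)\varpi+\tfrac{\lambda^{3}}{12}(c-c_{\aa})\vac ,
\]
so $\varpi$ is a Virasoro vector of central charge $c-c_{\aa}$. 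Moreover $[L_\lambda\varpi]=[\varpi_\lambda\varpi]$ (because $[L^{\aa}_\lambda\varpi]=0$), so $\varpi$ is $L$--quasi-primary, and it is $L$--primary of conformal weight $2$ exactly when $c=c_{\aa}$.

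For the ``only if'' direction I read off the constant term of this OPE, $\varpi_{(3)}\varpi=\tfrac{c-c_{\aa}}{2}\vac$, which lies in $I$. As $I$ is a graded ideal and $W(0)=\C\vac$, ``$I$ proper'' is equivalent to $\vac\notin I$; since $\varpi_{(3)}\varpi\in I$ this forces $c=c_{\aa}$. (Contrapositively, if $c\neq c_{\aa}$ then $\vac=\tfrac{2}{c-c_{\aa}}\,\varpi_{(3)}\varpi\in I$, so $I=W$ is not proper.)

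The substance is the ``if'' direction. Assume $c=c_{\aa}$; it suffices to prove $I\cap W(0)=0$. Writing $I$ as the span of the vectors $u_{(n)}\varpi$ and using skew-symmetry together with the $\tfrac12\Z_{\geq0}$--grading, every weight-zero element of $I$ is a linear combination of vectors $\varpi_{(m+1)}u=L^{\varpi}_{m}u\in W(0)=\C\vac$ with $u\in W(m)$, $m\in\Z_{\geq0}$ (half-integer weights produce no term, the mode index not being an integer). The cases $m=0,1$ vanish at once ($\varpi_{(1)}\vac=0$, and $\varpi_{(2)}\aa=0$ by $[\varpi_\lambda\aa]=0$), and for $m\geq2$, since the modes of $\varpi$ commute with the $V(\aa)$--action, $L^{\varpi}_{m}u$ can be nonzero only when $u$ has $L^{\aa}_{0}$--weight zero. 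For such $u$ I would expand it into normally ordered monomials in the strong generators $\aa\cup\{\varpi\}\cup S$ and compute $L^{\varpi}_{m}u$ through the Borcherds and commutator identities: monomials built only from $\aa$ are annihilated because $[\varpi_\lambda\aa]=0$ makes $\varpi_{(j)}$ commute with the $\aa$--modes and kill $\aa$; the generator $\varpi$ is annihilated because $\varpi_{(j)}\varpi=0$ for $j\geq2$ (this is where $c=c_{\aa}$ enters); the members of $S$ of positive $L^{\aa}_{0}$--weight cannot appear in a monomial of $L^{\aa}_{0}$--weight zero; and for the members of $S$ of $L^{\aa}_{0}$--weight zero --- first of all those in $S\cap W(2)$ --- one invokes the hypothesis $(L-L^{\aa})_{(2)}X=0$ together with the structural assumptions \eqref{h1}--\eqref{h3} on $S$ and the splitting $\varpi_{(j)}=L_{(j)}-(L^{\aa})_{(j)}$ to see that the pertinent modes of $\varpi$ annihilate them too. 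Assembling these gives $L^{\varpi}_{m}u=0$ for all relevant $u$, so $I\cap W(0)=0$: $I$ is proper, $W/I\neq0$, and since $\overline{L}=\overline{L^{\aa}}$ in $W/I$, the image of $V(\aa)$ is conformally embedded.

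I expect the main obstacle to be this last direction, and within it the delicate point is the behaviour of the modes of $\varpi$ on the weight-two strong generators of $L^{\aa}_{0}$--weight zero --- equivalently, showing $\langle X,\varpi\rangle=0$ for these $X$ with respect to the invariant form on $W$ (which exists since $L_{1}W(1)=0$), the point being that $\varpi_{(3)}X=\langle X,\varpi\rangle\vac$ by $L$--primality of $\varpi$. The contributions of the part built from $\aa$ and of $\varpi$ itself are immediate once $[\varpi_\lambda\aa]=0$ and $\varpi_{(3)}\varpi=\tfrac{c-c_{\aa}}{2}\vac$ are in hand, so the whole weight of the argument rests on the conditions the hypotheses impose on $S$ in low conformal weight --- precisely what $(L-L^{\aa})_{(2)}(S\cap W(2))=0$ together with \eqref{h1}--\eqref{h3} is there to supply.
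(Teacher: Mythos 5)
Your ``only if'' half is fine, and so is the reduction of properness of $I$ to the vanishing of $(L-L^{\aa})_{(m+1)}u$ for all homogeneous $u\in W(m)$; but the heart of your ``if'' direction has a genuine gap. You propose to check these vanishings by expanding $u$ into normally ordered monomials in the strong generators and verifying that the modes of $L-L^{\aa}$ kill each type of generator. The hypothesis of the theorem, however, only controls $(L-L^{\aa})(2)X$ for $X\in S\cap W(2)$; it gives no information whatsoever about $(L-L^{\aa})_{(j)}X$ when $X\in S$ has conformal weight greater than $2$, and such generators can perfectly well have $L^{\aa}(0)$--weight zero (the Sugawara weights $C_i$ need not be positive, $L^{\aa}(0)$ is not assumed to act semisimply, and its ``weight'' is not additive on normally ordered monomials without further argument). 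So the two steps ``members of $S$ of positive $L^{\aa}(0)$--weight cannot appear'' and ``for the members of $S$ of $L^{\aa}(0)$--weight zero one invokes the hypothesis together with \eqref{h1}--\eqref{h3}'' are exactly the missing content: no mechanism is offered, and none is available from the stated hypotheses, to control those modes. You also misplace the delicate point at the end: the weight-two generators are precisely what the hypothesis disposes of; the real obstruction in your scheme is everything of higher conformal weight.

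The paper's proof sidesteps this entirely by a global device you only use peripherally: since $L(1)W(1)=0$, $W$ carries a $\phi$--invariant Hermitian form whose kernel is the maximal proper ideal (Remark \ref{ker}) and for which distinct $L(0)$--eigenspaces are orthogonal. Hence it suffices to show $(L-L^{\aa},W(2))=0$. Strong generation together with $W(\tfrac12)=0$ gives $W(2)=\C(L-L^{\aa})+ (S\cap W(2)) +V(\aa)\cap W(2)$, and the three pairings vanish by the weight-two hypothesis, by $c=c_{\aa}$, and by $L-L^{\aa}\in Com(V(\aa),W)$, respectively; therefore $L-L^{\aa}$ lies in the radical, so the ideal it generates sits inside the maximal ideal and is proper. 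This is how weight-two information alone suffices for all weights---a reduction your direct mode computation cannot reproduce. A secondary point: your GKO step $[(L-L^{\aa})_\lambda a]=0$ for all $a\in\aa$ fails when the form $\langle\cdot,\cdot\rangle$ on $\aa$ is degenerate (for $a\in\aa'_0$ there is no Sugawara summand in $L^{\aa}$, so $a$ is not $L^{\aa}$--primary), a case the theorem's setting allows; the paper first passes to the quotient by the ideal generated by the radical $\aa'$ (Lemma \ref{ideala}) before arguing, and your proof would need the same preliminary reduction.
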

\noindent Note that the weight two subspace is an $\aa$-module, and a sufficient condition for our criterion to hold is that the trivial representation of $\aa$ appears with multiplicity one.

Next we prove a sufficient condition for  a conformal level $k$ to be collapsing.
Assume    that $c=c_\aa$ and let $I$ be, as above,  the proper ideal generated by $L-L^\aa$. Let $\pi_I: W\to W/I=\overline W$ be the quotient map.
{ Set $V=span(S)$. Choose the  strong generators  in such a way that  $V$ is both $\aa$-stable and $L(0)$-stable. Decompose
$$V=\bigoplus_{i\in\mathcal J} V_i$$
into irreducible  $\aa$-modules. Since $L(0)$ and $\aa$ commute, we can assume that $V_i$ is homogeneous for $L(0)$ of conformal weight $\D_i$. Let also $C_{i}$ be the eigenvalue of $L^\aa$ on the  highest weight vector of the $V(\aa)$-module with top component
$V_i$ (cf. \eqref{Cj}).
Then $\overline W$ is strongly generated by $\aa \oplus \sum_{i\in\mathcal K}\pi_I(V_i), \mathcal K \subset \mathcal J$.
 We assume $\mathcal K$ to be minimal in the sense that, if $\mathcal T$ is  a proper subset of $\mathcal K$, then $\aa\oplus (\sum_{i\in \mathcal T}\pi_I(V_i))$ does not strongly generate $\overline W$. In Theorem \ref{Criterion2} we prove
 
\begin{theorem}\label{Criterion2-intr} With the above assumptions, $C_j= \D_j$ for all $j\in\mathcal K$. In particular, if $C_i\ne \D_i$ for all $i\in \mathcal J$, then $\mathcal K=\emptyset$, hence
$\overline W$  collapses to $V(\mathfrak a)  / (I \cap V(\mathfrak a))$.
\end{theorem}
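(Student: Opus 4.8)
The plan is to use Theorem~\ref{Criterion-intr} to equip $\overline W=W/I$ with two a priori different conformal gradings --- its intrinsic one and the Sugawara grading of the affine subalgebra $\pi_I(V(\aa))$ --- and to force them to agree on each $\pi_I(V_i)$. First I would record the bookkeeping. Since $c=c_\aa$, Theorem~\ref{Criterion-intr} tells us that $I=W\cdot(L-L^\aa)$ is a proper ideal, that $\overline W\neq 0$, and that the image of $V(\aa)$ is conformally embedded, so $\pi_I(L^\aa)$ is a conformal vector of $\overline W$. Because $L-L^\aa\in I$ is homogeneous of conformal weight $2$, the ideal $I$ is $L(0)$-graded; hence $\overline W=\bigoplus_n\pi_I(W(n))$ and the conformal weight operator of $\overline W$ is $\pi_I(L)(0)$, which equals $\pi_I(L^\aa)(0)$ since $L-L^\aa\in I$. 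But $\pi_I(L^\aa)(0)$ is also the Sugawara operator $L^\aa(0)$ of the affine subalgebra $\pi_I(V(\aa))\subseteq\overline W$. So on $\overline W$ the intrinsic grading and the Sugawara grading coincide.

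Next I would fix $i\in\mathcal J$ with $\pi_I(V_i)\neq 0$. Since $I$ is stable under $V(\aa)$, the map $\pi_I\colon V_i\to\overline W$ is $\aa$-equivariant, and as $V_i$ is irreducible it is injective; so $\pi_I(v)\neq 0$ and $\pi_I(v)\in\overline W(\D_i)$ for every nonzero $v\in V_i$. On the other hand $L^\aa(0)$ acts on $V_i\subseteq W$ by the scalar $C_i$ of \eqref{Cj} --- precisely because $V_i$ is the top component of the $V(\aa)$-submodule $V(\aa)\!\cdot\!V_i$ of $W$, which is how the $C_i$ are attached to the $V_i$. Applying the vertex algebra homomorphism $\pi_I$ to the identity $L^\aa(0)v=C_iv$ in $W$ and invoking the previous paragraph gives
\[
\D_i\,\pi_I(v)=\pi_I(L)(0)\,\pi_I(v)=\pi_I(L^\aa)(0)\,\pi_I(v)=\pi_I\!\big(L^\aa(0)v\big)=C_i\,\pi_I(v),
\]
whence $C_i=\D_i$. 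Thus for every $i\in\mathcal J$ either $\pi_I(V_i)=0$ or $C_i=\D_i$.

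Finally I would feed in the minimality of $\mathcal K$. If some $j\in\mathcal K$ had $\pi_I(V_j)=0$, then $\aa\oplus\sum_{i\in\mathcal K\setminus\{j\}}\pi_I(V_i)=\aa\oplus\sum_{i\in\mathcal K}\pi_I(V_i)$ would still strongly generate $\overline W$, contradicting the minimality of $\mathcal K$; hence $\pi_I(V_j)\neq 0$, and therefore, by the dichotomy above, $C_j=\D_j$ for all $j\in\mathcal K$. This is the first assertion. If moreover $C_i\neq\D_i$ for every $i\in\mathcal J$, then $\mathcal K$ must be empty, so $\overline W$ is strongly generated by $\aa=\pi_I(W(1))$ alone; hence $\overline W$ is the vertex subalgebra generated by $\pi_I(W(1))$, namely $\pi_I(V(\aa))=V(\aa)/(I\cap V(\aa))$, i.e. $\overline W$ collapses to $V(\aa)/(I\cap V(\aa))$.

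The only step that is not purely formal is the assertion that $L^\aa(0)$ acts on $V_i$ by the scalar $C_i$: this presupposes that the strong generators have been chosen so that each irreducible $\aa$-summand $V_i$ of $V$ is a highest weight space for $V(\aa)$ --- equivalently, the top of $V(\aa)\!\cdot\!V_i$ --- so that the Sugawara $L_0$ there reduces to the Casimir eigenvalue recorded in \eqref{Cj}. For the standard $W$--algebra generators this is a routine normalization, and everything after it is formal manipulation of $\pi_I$ and of strong generation; so I expect this to be the only genuinely delicate point.
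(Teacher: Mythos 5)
There is a genuine gap, and it is exactly at the point you flag as ``routine normalization'': the claim that $L^\aa(0)$ acts on $V_i\subset W$ by the scalar $C_i$. The constant $C_i$ of \eqref{Cj} is a purely formal Casimir expression, and \eqref{Sisingular} asserts $L^\aa(0)v=C_iv$ only for $v\in S_i$ viewed inside the \emph{top component of an abstract weight $V(\aa)$-module}, i.e.\ under the assumption that the positive modes $a_n$, $n>0$, $a\in\aa$, annihilate $v$. Nothing in the hypotheses of the theorem says that the strong generators $V_i$ (or their images $\pi_I(V_i)$ in $\overline W$) are primary for $V(\aa)$: $a_n$ with $n>0$ lowers conformal weight and can send $V_i$ to a nonzero combination of lower-weight generators and normally ordered products, in which case $L^\aa(0)v=C_iv+(\text{correction terms})$ and your displayed chain of equalities breaks. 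This cannot be dismissed as a choice of generators --- in the applications (e.g.\ the hook case, Theorem \ref{coll}) only the $G^\pm_i$ are shown to be affine-primary, not the $W_i$ --- and the whole point of the abstract criterion is to work without such an assumption.

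The paper's proof is built precisely to sidestep this: it fixes $\D_i$, forms the vertex subalgebra $\overline W'$ generated by $\overline W(\D_i)^-=\sum_{n<\D_i}\overline W(n)$, and computes in the quotient $\overline W/\overline W'$. There the positive modes of $\aa$ automatically kill the coset $\pi_I(S_i)+\overline W'$ (they map into lower-weight spaces, hence into $\overline W'$), so $L^\aa(0)$ acts on that coset by $C_i$, while $L(0)=L^\aa(0)$ on $\overline W$ acts by $\D_i$; minimality of $\mathcal K$ is then used in a stronger way than in your argument --- not merely to get $\pi_I(V_j)\ne 0$, but to guarantee $\pi_I(S_i)\cap\overline W'=\{0\}$, so the coset is nonzero and the two eigenvalues can be equated. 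Your first paragraph (the two gradings of $\overline W$ coincide) and your final paragraph (minimality forces $\pi_I(V_j)\ne0$; $\mathcal K=\emptyset$ gives the collapse to $V(\aa)/(I\cap V(\aa))$) are fine, but without the quotient-by-$\overline W'$ device (or an extra primarity hypothesis you would have to add and then verify in applications) the central eigenvalue comparison is unproved.
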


\subsection{Hook type $W$--algebras and rectangular $W$--algebras}
 Hook type  $W$--algebras  recently appeared in \cite{LS}  as coset vertex algebras,  and also  in the context of dualities and trialities of various vertex algebras \cite{CL, CL2,CLNS}. In these papers  the authors mainly stu\-died the generic level case. Since  conformal levels are not generic, we derive  in  Theorem \ref{41}  the  structure of these $W$-algebras at arbitrary level, using only very general results by  Kac and Wakimoto \cite{KW}.   A similar approach is applied in Theorem \ref{genrect} for the rectangular $W$--algebras.

We  first apply Theorems \ref{Criterion-intr} and \ref{Criterion2-intr} to the hook type $W$-algebra $W_k(\g, x, f_{m,n})$ for $\g=sl(m+n)$ (i.e., the partition representing the nilpotent element $f_{m,n}$ is the hook $(m,1^n)$). The outcome is the following result (see Theorems \ref{MT}, \ref{coll}).

\begin{theorem}\label{MT-intr}
 
\item[(1)]The embedding $\mathcal V(\g^{\natural}) \hookrightarrow W_k(\g, x,f_{m,n})$ is conformal if and only if $$k=k_{m,n} ^{(i)},\quad 1\le i\le 4,$$ where
 $k_{m,n} ^{(1)}  =-\frac{m}{m+1} h^{\vee}$ ($n >1$),
 $k_{m,n}^{(2)} =   
-\frac{(m-1) h^{\vee} -1}{m}$ ($n \ge 1$),
 $ k_{m,n}^{(3)} =  - \frac{(m-2) h^{\vee}+1}{m-1} $  ($n \ge 1$, $m >1$),
 $k_{m,n}^{(4)} = -\frac{(m-1)h^{\vee}}{m} $.
 \item[(2)] Levels $k_{m,n}^{(3)}$ ($m \ne n-1$) and $k_{m,n}^{(4)}$ are collapsing.
 \end{theorem}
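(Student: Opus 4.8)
The plan is to combine the explicit presentation of $W_k(sl(m+n),x,f_{m,n})$ furnished by Theorem \ref{41} with the two abstract criteria, Theorems \ref{Criterion-intr} and \ref{Criterion2-intr}. First I would record from Theorem \ref{41} the minimal set of strong generators, their conformal weights, and their structure as $\g^\natural$-modules: the weight one space is $\g^\natural\cong gl(n)$ (with $gl(1)=\C$ when $n=1$); outside $\mathcal V(\g^\natural)$ there is a single weight two generator, the conformal vector $L$; and the remaining generators consist of $\g^\natural$-invariant fields $W^{(3)},\dots,W^{(m)}$ of conformal weights $3,\dots,m$ together with two families $G^{\pm}$ of conformal weight $\tfrac{m+1}{2}$ transforming in the standard, resp.\ dual standard, $sl(n)$-module, carrying a nonzero $gl(1)$-charge. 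I would also read off the form $\beta_k$: the level $k^\natural$ of the affine $\widehat{sl}(n)\subset W_k$ and the value of $\beta_k$ on the central current of $\g^\natural$ are affine-linear in $k$, computed from \cite{KW}, the latter vanishing precisely at $k=k^{(4)}_{m,n}$.

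For Part (1), the necessary direction is immediate: a conformal embedding forces $L=L^{\g^\natural}$, hence $c(\g,k)=c_{\g^\natural}$ (at $k=k^{(4)}_{m,n}$, where the central current becomes null, this is read with $L^{\g^\natural}$ the Sugawara vector of $\mathcal V(sl(n))$ alone). For the converse I would apply Theorem \ref{Criterion-intr} with $\aa=W(1)=\g^\natural$ and $S=\{W^{(j)}\}_{j=3}^{m}\cup\{G^{\pm}\}$. Hypotheses \eqref{h1}, \eqref{h2}, \eqref{h3} hold since $W_k$ is conformal with $\tfrac12\Z_+$-grading, $W(0)=\C$, and $W(1)=\g^\natural$ closes under the $\l$-bracket \eqref{aff} to $V^{\beta_k}(\g^\natural)$; the remaining requirement is checked by noting that $S\cap W(2)=\emptyset$ unless $m=3$, in which case $S\cap W(2)=\{G^{\pm}\}$ and $(L-L^{\g^\natural})(2)G^{\pm}$ is a $\g^\natural$-equivariant element of $W(1)=\g^\natural$, hence zero since the $\g^\natural$-module carried by $G^{\pm}$ — a standard or dual standard $sl(n)$-module of nonzero $gl(1)$-charge — does not occur in $gl(n)$. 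Equivalently, the trivial $\g^\natural$-module enters $W(2)$ only through $L$ and $\mathcal V(\g^\natural)(2)$, which is the sufficient condition noted after Theorem \ref{Criterion-intr}. Thus, away from $k^{(4)}_{m,n}$, the embedding is conformal iff $c(\g,k)=c_{\g^\natural}$; clearing the linear denominators ($k+h^\vee$, and $k^\natural+n$ when $n\geq2$) turns this into a polynomial equation in $k$ of degree $\leq3$ (degree $\leq2$ when $n=1$), and an elementary factorization exhibits its solutions as $k^{(1)}_{m,n}$, $k^{(2)}_{m,n}$, $k^{(3)}_{m,n}$, the constraints on $m,n$ recording the coincidence $k^{(2)}_{m,n}=k^{(3)}_{m,n}$ (which holds iff $m=n+1$) and the exclusion of the pole $k=-h^\vee$. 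The level $k^{(4)}_{m,n}$ is treated on its own: there $\beta_k$ makes the central current of $\mathcal V(\g^\natural)$ null, forcing it into the maximal proper ideal, so in $W_{k^{(4)}}$ the affine part reduces to $\mathcal V(sl(n))$ and one checks directly that $c(\g,k^{(4)}_{m,n})$ equals its Sugawara central charge.

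For Part (2), fix $k\in\{k^{(3)}_{m,n}\ (m\neq n-1),\ k^{(4)}_{m,n}\}$. By Part (1) these levels are conformal, so Theorem \ref{Criterion2-intr} applies to $\overline W=W_k/I$ with $V=span(S)$ decomposed into the irreducible $\g^\natural$-modules above (for $k^{(4)}_{m,n}$ one uses $\aa=sl(n)$, the central direction having already collapsed). It suffices to show $\D_i\neq C_i$ for every generator $V_i$, where $\D_i$ is the intrinsic conformal weight and $C_i$ the Sugawara conformal weight. For the $\g^\natural$-invariant fields this is clear: $C_i=0\neq j=\D_i$ for $W^{(j)}$. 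For $G^{\pm}$, $C_i$ is computed from the highest weight of the standard $sl(n)$-module and the Sugawara formula at level $k^\natural$ (plus the relevant central contribution), giving an explicit rational function of $k$, and one verifies by substitution that $C_i\neq\tfrac{m+1}{2}=\D_i$ at $k=k^{(3)}_{m,n}$, resp.\ at $k=k^{(4)}_{m,n}$. The hypothesis $m\neq n-1$ enters exactly here: when $m=n-1$ one finds $C_i=\D_i$ for the $G^{\pm}$ and the criterion says nothing. Since, under the stated hypotheses, $C_i\neq\D_i$ for all $i\in\mathcal J$, Theorem \ref{Criterion2-intr} gives $\mathcal K=\emptyset$, that is $W_k=\mathcal V(\g^\natural)$, so $k$ is collapsing.

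The main obstacles I anticipate are two. The first is extracting from Theorem \ref{41} the precise $\g^\natural$-module structure of the weight two space and of the $G^{\pm}$, together with the exact form $\beta_k$: this is what makes both abstract criteria applicable, and it needs care for small $n$ and at the level where the central current of $\mathcal V(\g^\natural)$ degenerates. The second is the central-charge bookkeeping: carrying out the factorization of $c(\g,k)=c_{\g^\natural}$ cleanly enough to recover exactly the four families (no spurious roots, coincidences accounted for), and, in Part (2), pinning down the inequality $C_i\neq\D_i$ and the exact exceptional locus $m=n-1$.
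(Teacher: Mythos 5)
Your overall strategy is exactly the paper's: part (1) is proved by feeding the generators of Theorem \ref{41} into Theorem \ref{Criterion} (with the $m=3$ case being the only one where $S\cap W(2)\neq\emptyset$) and then solving $c_{m,n}(k)=c_{\g^\natural}$, with the level $k^{(4)}_{m,n}$ corresponding to the degenerate case $k_0=0$ where the Heisenberg contribution drops out; part (2) is proved by Theorem \ref{Criterion2}, comparing the Sugawara weights $C_i$ of the trivial modules ($C_i=0\neq\Delta_i$) and of $G^\pm$ ($\Delta=\tfrac{m+1}{2}$) at the two levels. Your $m=3$ argument by $\g^\natural$-equivariance is a harmless variant of the paper's "primary for $L$ and $V(\g^\natural)$" argument (note only that with the paper's mode convention $(L-L^\aa)(2)$ sends $W(2)$ to $\C\vac$, not to $W(1)$; the charge argument works either way).

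There is, however, a concrete error in part (2): you assert, deferring the computation, that the coincidence $C_{G^\pm}=\Delta_{G^\pm}$ at $k=k^{(3)}_{m,n}$ occurs exactly when $m=n-1$. The actual substitution (this is the computation in the paper's proof of Theorem \ref{coll}) gives
$$C_{G^\pm}=(1-\d_{k_0,0})\frac{m+n}{2mnk_0}+\frac{n^2-1}{2n(k_1+n)}=\frac{(m-1)\left(m+n^2+n-1\right)}{2n^2}\quad\text{at }k=k^{(3)}_{m,n},$$
and this equals $\frac{m+1}{2}$ if and only if $n=m-1$ (equivalently $m=n+1$), not $m=n-1$; at $m=n-1$ one checks $(m-1)(m^2+4m+1)\neq(m+1)^3$, so the criterion applies there and those levels are in fact strongly collapsing. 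The correct exceptional locus $n=m-1$ is precisely where $k^{(3)}_{m,n}=k^{(2)}_{m,n}$ — consistent with your own observation in part (1) that the two families coincide iff $m=n+1$, and with Theorem \ref{coll} in the body of the paper, which states the condition as $n\neq m-1$; the "$m\neq n-1$" in the statement you were given appears to be a typo. As written, the step "when $m=n-1$ one finds $C_i=\D_i$" would fail if actually carried out, so you should redo the substitution and state the exclusion as $n\neq m-1$.
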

\noindent The case $k=k_{m,n} ^{(1)}$ solves a conjecture of Creutzig  \cite{TC}.
%(see Remark \ref{Creu}).\par
Next we consider the rectangular case and the $W$-algebra $W_k(\g, x, f)$ for $\g=sl(m q)$ (i.e., the partition representing the nilpotent element $f$ is rectangular of shape  $(q^m)$). In this case we obtain the following theorem (see Theorem \ref{collapsing-rectangular}).

\begin{theorem}\label{rectangular-intr}
 
\item[(1)]The embedding $\mathcal V(\g^{\natural}) \hookrightarrow W_k(\g, x,f_{m,n})$ is conformal if and only if $$k=k_{m,q} ^{[i]},\quad 1\le i\le 3,$$ where
 $k_{m,q} ^{[1]}  =-\frac{m q^2}{q+1}  $, 
 $k_{m,q}^{[2]} = -\frac{-m q^2 + m q-1}{q}$, 
 $ k_{m,q}^{[3]} =  -\frac{-m q^2 + m q+1}{q}$.
   \item[(2)] Levels $k_{m,n}^{[i]}$, $i=1,2,3$  are collapsing for all $q \ge 2$ and $m \ge 3$  and we have
     \begin{align}
W_{k^{[1]} _{m,q} }(\g,x,f_{m,n})&=V_{-\frac{m q}{q+1}}(sl(m)),\nonumber\\  
W_{k^{[2]} _{m,q} }(\g,x,f_{m,n})&=V_{-1}(sl(m)) , \nonumber \\ 
W_{k^{[3]} _{m,q} }(\g,x,f_{m,n})&=V_{1}(sl(m)).  \nonumber
\end{align}
\end{theorem}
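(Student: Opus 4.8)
The plan is to derive both parts from the two abstract criteria, Theorems~\ref{Criterion-intr} and~\ref{Criterion2-intr}, feeding in the structure of the rectangular $W$--algebra provided by Theorem~\ref{genrect}. Write $\mathfrak g=sl(mq)$ and let $f$ be a nilpotent of type $(q^m)$; then the centralizer of an $sl(2)$--triple through $f$ is $\mathfrak g^\natural=sl(m)$, embedded in $\mathfrak g$ with Dynkin index $q$. By Theorem~\ref{genrect}, $W^k(\mathfrak g,x,f)$ is strongly generated by the affine currents $\mathfrak a=W(1)\cong sl(m)$ together with, for each $d=2,\dots,q$, one conformal weight $d$ multiplet transforming in the adjoint of $sl(m)$ and one conformal weight $d$ field transforming trivially; Theorem~\ref{genrect} also supplies the central charge $c(\mathfrak g,k)$ and the restriction $\beta_k|_{\mathfrak g^\natural}$, so that the affine subalgebra $V^{\beta_k}(sl(m))$ sits at level $k^\natural=k^\natural(k)=q\bigl(k+m(q-1)\bigr)$, an affine--linear function of $k$.

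For part (1) I would first verify the hypotheses of Theorem~\ref{Criterion-intr}, taking for $S$ the set of the above weight $d$ generators with $d\ge2$, with the trivial weight $2$ generator replaced by $L-L^{\mathfrak a}$ (this is legitimate since the trivial subspace of $W(2)^{\mathfrak a}$ is spanned by $L^{\mathfrak a}$ and the trivial weight $2$ generator). The condition $(L-L^{\mathfrak a})(2)X=0$ for $X\in S\cap W(2)$ then concerns only the adjoint--valued weight $2$ multiplet $G^{(2)}$; choosing it primary for $L$ and primary for the affine action of $\mathfrak a$ (both standard normalizations of $W$--algebra generators), a short mode computation gives $L(2)G^{(2)}=0$ and $L^{\mathfrak a}(2)G^{(2)}=0$, hence $(L-L^{\mathfrak a})(2)G^{(2)}=0$. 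Theorem~\ref{Criterion-intr} then reduces the conformality of $\mathcal V(\mathfrak g^\natural)\hookrightarrow W_k(\mathfrak g,x,f)$ to the scalar identity $c(\mathfrak g,k)=c_{\mathfrak a}$, where $c_{\mathfrak a}=\dfrac{k^\natural(k)\,(m^2-1)}{k^\natural(k)+m}$. Substituting the Kac--Wakimoto expression for $c(\mathfrak g,k)$ and the expression for $k^\natural(k)$ and clearing the two denominators $k+mq$ and $k^\natural(k)+m$, the identity becomes a cubic equation in $k$; I expect it to have no root at $k=-h^\vee=-mq$ and to factor, up to a nonzero constant, as $(k-k_{m,q}^{[1]})(k-k_{m,q}^{[2]})(k-k_{m,q}^{[3]})$, which is the assertion of part (1).

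For part (2), at each conformal level $k=k_{m,q}^{[i]}$ I would invoke Theorem~\ref{Criterion2-intr} for $W=W^{k}(\mathfrak g,x,f)$ and $I=(L-L^{\mathfrak a})$. With $V=\text{span}(S)$, the irreducible $\mathfrak a$--modules $V_j$ are: the adjoint of $sl(m)$ in conformal weight $\Delta_j=d$ ($2\le d\le q$), with $C_j=\dfrac{(\theta,\theta+2\rho)}{2(k^\natural+m)}=\dfrac{m}{k^\natural(k_{m,q}^{[i]})+m}$; and the trivial module in conformal weight $\Delta_j=d$ ($3\le d\le q$), with $C_j=0$. By Theorem~\ref{Criterion2-intr} it suffices to check $C_j\ne\Delta_j$ for all of these, i.e.\ $\dfrac{m}{k^\natural(k_{m,q}^{[i]})+m}\notin\{2,\dots,q\}$. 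Evaluating $k^\natural$ one gets $k^\natural(k_{m,q}^{[1]})=-\tfrac{mq}{q+1}$, $k^\natural(k_{m,q}^{[2]})=-1$, $k^\natural(k_{m,q}^{[3]})=1$, so that the relevant ratio equals $q+1$, $\tfrac{m}{m-1}$, $\tfrac{m}{m+1}$ respectively; for $q\ge2$ and $m\ge3$ none lies in $\{2,\dots,q\}$ (this is exactly where $m\ge3$ enters, since $\tfrac{m}{m-1}$ is a positive integer only when $m=2$). Hence $\mathcal K=\emptyset$, so $\overline W=W^{k}(\mathfrak g,x,f)/I$ is strongly generated by $\mathfrak a$ alone and collapses to the image of $V^{\beta_k}(sl(m))$; since the simple quotient $W_{k_{m,q}^{[i]}}(\mathfrak g,x,f)$ is in turn a quotient of $\overline W$, it is a simple quotient of $V^{\beta_k}(sl(m))$, whence $\mathcal V(\mathfrak g^\natural)=W_{k_{m,q}^{[i]}}(\mathfrak g,x,f)=V_{k^\natural(k_{m,q}^{[i]})}(sl(m))$, with the three levels $-\tfrac{mq}{q+1},\,-1,\,1$ just computed.

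The logical skeleton above is short; the real work, and the place I expect the main difficulties, is in making the input from Theorem~\ref{genrect} completely explicit for the rectangular nilpotent---the exact value of $c(\mathfrak g,k)$, the constant in $k^\natural(k)$, and the full list of $\mathfrak a$--isotypic components together with their Sugawara weights $C_j$---and in the two verifications resting on it: the primarity computation giving $(L-L^{\mathfrak a})(2)G^{(2)}=0$ (the easy ``multiplicity one'' sufficient condition of Theorem~\ref{Criterion-intr} is unavailable here, because the trivial $\mathfrak a$--module appears with multiplicity two in $W(2)$), and the check that the cubic in part (1) factors precisely as claimed.
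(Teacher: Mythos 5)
Your proposal is correct and follows essentially the same route as the paper's proof of Theorem \ref{collapsing-rectangular}: the structure of the rectangular $W$-algebra from Theorem \ref{genrect}, Theorem \ref{Criterion} applied with $S\cap W(2)$ equal to the adjoint weight-two multiplet so that conformality reduces to $C(k)=c_{\g^\natural}$ (the same cubic, with $k_1=q(k+mq-m)$ evaluating to $-\tfrac{mq}{q+1},\,-1,\,1$ at the three roots), and Theorem \ref{Criterion2} comparing the adjoint Sugawara value $\tfrac{m}{k_1+m}$, which equals $q+1$, $\tfrac{m}{m-1}$, $\tfrac{m}{m+1}$ respectively, with the conformal weights $\{2,\dots,q\}$, which is exactly where $m\ge 3$ enters. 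The only deviation is your justification of $(L-L^{\aa})(2)G^{(2)}=0$ through an asserted primarity of the weight-two adjoint fields: this assertion is unproved and also unnecessary, since $(L-L^{\aa})(2)$ maps $W(2)$ to $\C\vac$ in a $\g^\natural$-equivariant way and $G^{(2)}$ spans a nontrivial irreducible $\g^\natural$-module, so the vanishing is automatic --- that is, the ``multiplicity one'' mechanism you judged unavailable (it refers to multiplicity one of the trivial module among the weight-two strong generators, not in all of $W(2)$) is precisely the argument the paper uses.
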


 We should mention that in the cases when $k_{m,q}^{[i]}$ is admissible, the results of the previous theorem are (or can be) obtained by using methods from \cite{AEM}. Collapsing of the  non-admissible $W$--algebra $W_{k^{[2]} _{m,q} }(sl(mq),x,f_{m,n})$ was conjectured in \cite{AEM} (see Remark \ref{AAAA} below).

 \subsection{Conformal vs collapsing levels}

 Assume that $k$ is a conformal level of an affine $W$-algebra $W^k(\g, x, f)$. Then $\bar L = L - L^{\g^{\natural}}$ is a singular vector in $W^k(\g, x, f)$ and we can investigate  the ideal
 $I= W^k(\g, x, f). \bar L$ and the quotient vertex algebra
 $\overline{W}_k(\g, x, f) = W^k(\g, x, f) / {I}$. Since it can happen that $\overline{W}_k(\g, x, f) \ne  W_k(\g, x, f)$ (cf. Remark \ref{ex-nstr}), we introduce the following notion.
 
 \begin{defi}Let $\mathcal V(\g^\natural)$ be the image of $V^{\beta_k}(\g^\natural)$ in $\overline W_k(\g, x, f)$. 
 A conformal level $k$ is called {\it strongly collapsing} if $\mathcal V(\g^\natural) =  \overline W_k(\g, x, f)$.
 \end{defi}
 The strongly collapsing levels are easy to detect using our Theorem \ref{Criterion2-intr}.   We do have examples of levels which are collapsing but not strongly collapsing (see Remark \ref{ex-nstr}). These situations are difficult to detect in general, due to the lack of knowledge of explicit OPE formulas in non-minimal cases.
Clearly, every strongly collapsing level is also  collapsing. Levels described in Theorem \ref{Criterion2-intr}  are  strongly collapsing.
So when we investigate whether a certain level is collapsing, we first check  if such a level is strongly collapsing. 
%The next question is: can we detect a generator of $W^k(\g, x, f)$ which does not belong to $I$ ?
% We introduce a new method for solving this problem. It uses the fusion rules of the Virasoro vertex algebra ${\rm Vir}^{c=0}$  at central charge $0$ generated by the field $\bar L$. The procedure  is as follows:

%\begin{itemize}
%\item we check that Theorem \ref{Criterion2-intr} cannot be applied, so there is a generator $G$ such that $\bar L(0) G = 0$;
%\item motivated by the applications, we assume that $G$ is a singular vector, so $\bar M(0,0) = {\rm Vir}^{c=0}. G$ is a quotient of the  Verma module $M(0,0)$;
%\item by using fusion rules in the category of  ${\rm Vir}^{c=0}$--modules  (cf. Propositions \ref{prop-fusion-trick} and \ref{prop-fusion-trick-2}) we get that $G$ cannot appear in fusion products of other generators of the $W$-algebra.
%\end{itemize}
 
In the case of  hook $W$--algebras we get:
 \begin{prop}
 Levels $k_{m,n}^{(1)}$ and $k_{m,n} ^{(2)}$ are not strongly collapsing for  $W_k(sl(n+m), x, f_{m,n})$. \end{prop}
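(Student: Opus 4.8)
The plan is to show directly that $\mathcal V(\g^\natural)$ is a \emph{proper} subalgebra of $\overline{W}_k:=W^k(sl(n+m),x,f_{m,n})/I$ at $k=k_{m,n}^{(1)},k_{m,n}^{(2)}$ by exhibiting an element of $\overline{W}_k$ that cannot lie in $\mathcal V(\g^\natural)$; the natural candidate is the image of a bifundamental generator. First I would invoke Theorem \ref{41} to fix the strong generators of $W^k(sl(n+m),x,f_{m,n})$: the currents of $\aa=\g^\natural\cong gl(n)$ of conformal weight $1$; the $\g^\natural$-invariant fields $L=W_2,W_3,\dots,W_m$ of weights $2,\dots,m$; and two bifundamental families $G^+,G^-$ of weight $\Delta_0:=\tfrac{m+1}{2}$, spanning irreducible $\g^\natural$-modules $V_\pm$ on which the centre of $\g^\natural$ acts by a nonzero scalar $\pm q_0$. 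Two remarks are then immediate: $V_\pm$ is the charge-$(\pm q_0)$ subspace of $W^k$ of smallest conformal weight, and $\mathcal V(\g^\natural)$, being the image of $V^{\beta_k}(\g^\natural)$ and hence generated by the charge-zero $\g^\natural$-currents, sits inside the charge-zero part of $\overline{W}_k$. So it suffices to prove that $\pi_I(G^+)\neq 0$ in $\overline{W}_k$, i.e.\ that $I\cap V_+=0$.

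The key numerical input, which I would extract from the analysis behind Theorem \ref{MT}, is that at $k=k_{m,n}^{(1)}$ and $k=k_{m,n}^{(2)}$ the module $V_\pm$ is ``maximally degenerate'': its Sugawara conformal weight equals $\Delta_0$, and moreover the weight-preserving zero modes $(W_i)_{(i-1)}$ act as $0$ on $V_\pm$ for $3\le i\le m$ — in sharp contrast with $k_{m,n}^{(3)},k_{m,n}^{(4)}$, where the Sugawara weight of $V_\pm$ does not match $\Delta_0$. Granting this, note that $\bar L=L-L^\aa$ and $L^\aa$ are commuting conformal vectors ($L^\aa$ is a conformal subvector since the $\g^\natural$-currents have weight $1$), and that $G^\pm$ is primary for $L$ (weight $\Delta_0$) and for $L^\aa$ (weight $C_{V_\pm}$, as $G^\pm$ lies in the top component of its $\aa$-module); hence $\bar L(n)G^\pm=0$ for all $n\ge1$, and since $\bar L$ commutes with $\aa$, also $\bar L(n)u=0$ for $n\ge1$ and every $u$ in the $\aa$-submodule generated by $V_\pm$. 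Now $I$ is spanned by the vectors $\partial^j\big(\bar L(m)v\big)$, $v\in W^k$, $m\in\ZZ$, $j\ge0$; its charge-$q_0$, weight-$\Delta_0$ component is therefore spanned by those with $v$ of charge $q_0$, $j=0$, and $\bar L(m)v$ of weight $\Delta_0$ — forcing $m\ge1$ and $v$ of weight $\Delta_0-1+m$. Expanding $\bar L(m)v$ through the strong generators, one uses: $\bar L(n)|_{V_+}=0$ ($n\ge1$) for the $\aa$-descendants of $V_+$; $\bar L(0)L=\partial\bar L$, $\bar L(1)L=2\bar L$, $\bar L(n)L=0$ ($n\ge2$) for the $W_2$-descendants, which thus only yield $\bar L$-descendants of weight $>\Delta_0$; and $\bar L(0)W_i=\partial W_i$, $\bar L(1)W_i=iW_i$, $\bar L(n)W_i=0$ ($n\ge2$), together with $(W_i)_{(i-1)}|_{V_+}=0$, for the $W_i$-descendants, $i\ge3$. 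In each case $\bar L(m)v$ ends up in the charge-$q_0$ subspace of weight $<\Delta_0$, which is $0$. Hence $I\cap V_+=0$, so $\pi_I(G^+)\neq0$; being of nonzero charge it lies outside $\mathcal V(\g^\natural)$, and therefore $k_{m,n}^{(1)}$ and $k_{m,n}^{(2)}$ are not strongly collapsing.

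I expect the hard part to be exactly the degeneracy statement in the second paragraph and the corresponding computation $I\cap V_\pm=0$ for $m\ge3$: the ideal $I$ is generated by a single weight-$2$ vector, yet a priori the higher generators $W_3,\dots,W_m$ could feed $V_\pm$ into $I$ (through terms such as $\bar L(i+\ell)\big((W_i)_{(-\ell)}G^+\big)$), and ruling this out rests on the fact that at $k^{(1)},k^{(2)}$ — but not at $k^{(3)},k^{(4)}$, which are strongly collapsing by Theorem \ref{MT} — the bifundamental carries trivial higher $W_i$-charges, not merely a matching conformal weight. For $m=2$ no $W_i$ with $i\ge3$ occurs and the argument reduces to the minimal $W$-algebra computation of \cite{AKMPP-JA}. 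In all cases the conclusion is consistent with Theorem \ref{Criterion2-intr}, which forces the index set $\mathcal K$ to be contained in $\{V_\pm\}$ (the only generators with $C_i=\Delta_i$), so that ``not strongly collapsing'' is equivalent to the non-vanishing of the charge-$q_0$ part of $\overline{W}_k$.
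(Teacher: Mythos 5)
Your overall target is the same as the paper's (show $G^\pm_i\notin I$, then use the $J(0)$--charge to conclude that $\mathcal V(\g^\natural)$ is proper in $\overline W_k$), and your reduction to the vanishing of the charge--$\pm1$, weight--$\tfrac{m+1}{2}$ component of $I$ is sound. The gap is exactly at the point you yourself flag as ``the hard part'': your expansion of $\bar L_{(s)}v$ rests on two structural claims that are neither proved nor extractable from Theorem \ref{MT}, namely (i) that the zero modes $(W_i)_{(i-1)}$, $3\le i\le m$, annihilate $G^\pm$ at $k=k^{(1)}_{m,n},k^{(2)}_{m,n}$ (``trivial higher $W_i$--charges''), and (ii) that $\bar L_{(1)}W_i=iW_i$ and $\bar L_{(n)}W_i=0$ for $n\ge2$, i.e.\ that each $W_i$ is primary for the coset vector $\bar L$ (equivalently $L$--primary, affine--primary and of Sugawara weight $0$). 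Theorem \ref{MT} is only an equality of central charges and carries no information about the action of the modes of $W_3,\dots,W_m$ on the bifundamental generators; Theorem \ref{41} only gives $\g^\natural$--invariance of the $W_i$, not affine primarity, and the paper explicitly emphasizes that explicit OPEs are unavailable in the non-minimal case. So the terms in which $\bar L_{(s)}$ hits a $W_i$--factor of a monomial containing $G^+$ are not controlled by anything you have established, and the argument does not close.

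The paper's proof of Theorem \ref{collnc} is organized the other way around, precisely to avoid such OPE input: it first shows that $\bar L=L-L^{\g^\natural}$ is a \emph{singular} vector of $W^k(\g,x,f_{m,n})$, using only soft facts --- $\tau_i\bar L=0$ for $i\ge1$ because the $G^\pm_j$ are primary for $L$ and $V(\g^\natural)$; $W_p(s)\bar L=0$ for $s\ge2$ by conformal weight together with properness of $I$ (known from Theorem \ref{MT}); $W_p(1)\bar L=0$ because otherwise $I$ would meet $\g^\natural_{(-1)}\vac$, impossible since the levels $k_0,k_1$ are nonzero at $k^{(1)},k^{(2)}$; and $W_p(0)\bar L=a\bar L$ by excluding a subsingular vector in $V(\g^\natural)$. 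Once $\bar L$ is singular, $I$ is spanned by Verma--type monomials $U_s$ applied to $\bar L$ in which all $G$--modes are non-negative, and the bound $\mathrm{wt}(U_s)\ge 2+s\tfrac{m-1}{2}\ge\tfrac{m+3}{2}$ for $s\ge1$ rules out any charge--$\pm1$ vector of weight $\tfrac{m+1}{2}$ lying in $I$. Note also that the information actually available, $W_p(s)\bar L=0$ for $s\ge1$, yields via skew-symmetry only $\bar L_{(n)}W_p=0$ for $n\ge p$ and $\bar L_{(p-1)}W_p\in\C\bar L$; the intermediate products $\bar L_{(n)}W_p$ with $2\le n\le p-2$ that your case analysis needs are not forced to vanish. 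To repair your argument you would either have to prove the degeneracy claims (a new, nontrivial statement about the hook $W$--algebra at these levels) or reorganize it as in the paper, establishing singularity of $\bar L$ first and then running the weight estimate on the spanning set of $I$.
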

 But it still may happen that some of these levels are collapsing. 
 We conjecture:
 \begin{conj} \label{non-coll-1}$k= k^{(1)}_{m,n}$ is always non-collapsing. \end{conj}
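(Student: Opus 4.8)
To establish Conjecture~\ref{non-coll-1} the plan is as follows. Since $k=k^{(1)}_{m,n}$ is a conformal level (Theorem~\ref{MT-intr}(1)), $\bar L=L-L^{\g^\natural}$ is singular in $W^k(\g,x,f_{m,n})$, so one may form $\overline W:=\overline W_{k}(\g,x,f_{m,n})=W^k(\g,x,f_{m,n})/(W^k(\g,x,f_{m,n})\cdot\bar L)$; as the ideal generated by $\bar L$ is proper (Theorem~\ref{Criterion-intr}, since $c=c_\aa$) and graded, it lies in the maximal graded ideal, so the simple quotient $W_k(\g,x,f_{m,n})$ is a quotient of $\overline W$. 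By the Proposition showing that $k^{(1)}_{m,n}$ is not strongly collapsing, the minimal set $\mathcal K$ of Theorem~\ref{Criterion2-intr} (applied to $\overline W$) is non-empty: among the higher--weight strong generators of $W^k(\g,x,f_{m,n})$ described by Theorem~\ref{41}, at least one irreducible $\aa$--summand $V_j$, with $C_j=\D_j$, survives in $\overline W$ as part of a minimal strong generating set together with $\aa=W(1)$. One expects $V_j$ can be taken to be the pair of matter fields $G^{\pm}$ of conformal weight $\tfrac{m+1}{2}$, transforming in the standard and co-standard representations of $\g^\natural$. Then the conjecture is \emph{equivalent} to the assertion that the image of $G^{\pm}$ under $\overline W\twoheadrightarrow W_k(\g,x,f_{m,n})$ is non-zero: if it is, then $W_k(\g,x,f_{m,n})$ strictly contains $\mathcal V(\g^\natural)$ --- immediately when $m$ is even, since then $\tfrac{m+1}{2}\notin\Z$ while $\mathcal V(\g^\natural)$ is $\Z_{\geq0}$--graded, and in general because $G^{\pm}$ cannot lie in the subalgebra generated by $\aa$.

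The first and, in our view, most promising strategy is to produce a faithful concrete model of $W_k(\g,x,f_{m,n})$ and to evaluate $G^{\pm}$ inside it. Natural candidates are a Wakimoto--type free--field realization of $W^k(\g,x,f_{m,n})$, which descends to the simple quotient, or the coset presentations of hook $W$--algebras of \cite{LS,CL}; in either presentation $G^{\pm}$ is an explicit normally ordered expression in the generating fields, and one would verify that it remains non-zero modulo the relevant kernel at $k=k^{(1)}_{m,n}$. Equivalently, one may construct a $W_k(\g,x,f_{m,n})$--module --- for instance the image under quantum Hamiltonian reduction of a suitable $V_k(sl(m+n))$--module, or a suitable relaxed or spectrally--flowed module --- on which $G^{\pm}$ acts non--trivially; non--vanishing of this action forces $G^{\pm}\neq0$ in $W_k(\g,x,f_{m,n})$. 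A second, geometric, strategy is to note that if $k^{(1)}_{m,n}$ were collapsing then $X_{W_k(\g,x,f_{m,n})}\cong X_{\mathcal V(\g^\natural)}\subseteq(\g^\natural)^{*}$, of dimension at most $\dim\g^\natural$, while $X_{W_k(\g,x,f_{m,n})}=\mathcal S_{f_{m,n}}\cap X_{V_k(sl(m+n))}$; one would then seek a contradiction from a lower bound for, or a description of, the associated variety $X_{V_k(sl(m+n))}$ at this level.

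The main obstacle is that $k^{(1)}_{m,n}$ need not be admissible: the asymptotic--data method of \cite{AEM} is then unavailable, the singular vectors of $V^{k}(sl(m+n))$ --- hence the maximal graded ideal of $W^{k}(\g,x,f_{m,n})$ --- are not known, and $X_{V_{k}(sl(m+n))}$ is itself open in general. So the genuine content of the conjecture is the non--vanishing of the surviving higher--weight generator in the \emph{simple} quotient, i.e.\ ruling out that the maximal graded ideal of $W^{k}(\g,x,f_{m,n})$ contains all of $\bigoplus_{j\in\mathcal K}V_j$, or equivalently deciding whether $\overline W\twoheadrightarrow W_k(\g,x,f_{m,n})$ is an isomorphism at these levels. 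We expect the cleanest path to be a free--field or coset realization in which this non--vanishing is manifest, treating first the case $m$ even (where the parity of the conformal grading already separates $G^{\pm}$ from $\mathcal V(\g^\natural)$ once $G^{\pm}\neq0$) before the integer--weight case $m$ odd, in which one must additionally check that $G^{\pm}$ is not a normally ordered expression in the affine currents.
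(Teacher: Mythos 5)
Your text is a programme, not a proof, and the statement you are addressing is precisely the part the paper itself leaves open as a conjecture. Every route you sketch (free--field or coset realizations, auxiliary modules obtained by reduction, associated--variety bounds) stops exactly at the point you yourself call ``the genuine content of the conjecture'': showing that the images of the weight--$\tfrac{m+1}{2}$ generators $G^{\pm}_i$ are non-zero in the \emph{simple} quotient $W_k(\g,x,f_{m,n})$, equivalently that the maximal graded ideal of $W^k(\g,x,f_{m,n})$ does not swallow them. Your reduction to this non-vanishing statement is essentially correct (the surviving summands in $\mathcal K$ must be the $G^{\pm}$'s, since the $W_i$'s have $C_i=0\ne\D_i$, and the $J(0)$--charge separates $G^{\pm}$ from $\mathcal V(\g^\natural)$), but this reduction is already contained in Theorems \ref{41}, \ref{MT}, \ref{Criterion2} and \ref{collnc}; none of the strategies meant to establish the non-vanishing is carried out, so no new argument is actually supplied.

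What the paper does prove, and what your plan does not recover, is the admissible case (Theorem \ref{non-collapsing-admissible}), by a mechanism absent from your proposal: for admissible $k$ the maximal ideal of $V^k(sl(n+m))$ is generated by a \emph{single} singular vector; since $f_{m,n}\in N_{m}\subset N_{m+1}=\overline{\mathbb O}_{k}$ (Lemma \ref{95}) and $f_{m,n}$ admits an even good grading (Remark \ref{good}), the Arakawa--van Ekeren theorem identifies $H_f(V_k(\g))$ with $W_k(\g,x,f_{m,n})$, and exactness of $H_f$ shows that the maximal ideal of $W^k(\g,x,f_{m,n})$ is generated by one singular vector of conformal weight $d_k^W=2$ (Lemma \ref{94}) and trivial $\g^\natural$--weight, which can only be $\bar L$; hence $\overline W_k=W_k$, and Theorem \ref{collnc} (``not strongly collapsing'') then gives non-collapsing. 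Your argument never invokes admissibility, so it does not even reproduce this settled case (nor the $n=2$ case $k^{(1)}_{p-1,2}$, which is admissible and also follows from Theorem \ref{rp}, i.e. \cite{ACGY}). For non-admissible levels the paper offers only the heuristic of Remark \ref{rem-non-coll-1}: if $k^{(1)}_{m,n}$ were collapsing, the non-zero elements $:(G_i^{\pm})^p:$ of Proposition \ref{75} would be singular vectors in $\overline W_k$, forcing $V^k(\g)$ to have infinite length in $KL^k$, which is expected to be impossible. If you wish to pursue your free--field or coset strategy, the concrete first test is to make the non-vanishing of $G^{\pm}_i$ visible in such a realization at the known (admissible) conformal levels before attacking the non-admissible ones; as it stands, the gap between your outline and a proof is exactly the gap the paper acknowledges.
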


 Using different techniques, it was proved in \cite{ACGY} that the level $k_{p-1,2}^{(1)}$ is not collapsing. But we prove  in Theorem \ref{collapsing-3p2} that $k_{3p, 2}^{(2)}$ is collapsing. The main difference between these two cases is that $k_{p-1,2}^{(1)}$ is admissible, whereas $k_{3p, 2}^{(2)}$ is not.
 % admissible for $sl(3p+2)$.
 
 The peculiarity
 of the admissible case is that the maximal ideal in $V^k(sl(n+m))$ is generated by one singular vector.  As a consequence, we prove in Theorem \ref{non-collapsing-admissible}  that  $\bar L$ generates the maximal ideal in $W^k(sl(n+m), x,f_{m,n})$.
Thus  if  $k= k^{(i)}_{m,n}$, $i=1,2$ is admissible, then $k$ is not collapsing.   Therefore  Conjecture \ref{non-coll-1} holds for admissible levels. We also present in Remark \ref{rem-non-coll-1} some arguments explaining why the conjecture should hold in general.

  \subsection{Decomposition of conformal embeddings: hook $W$--algebra case}  If the embedding $\mathcal V({\g}^{\natural}) \hookrightarrow W_k(\g, x, f)$ is conformal, it is natural to ask for the decomposition  of $W_k(\g, x, f)$ as a  $\mathcal V({\g}^{\natural})$-module.
 In general, describing  this decomposition is a  hard  problem, open in most cases.  Some decompositions are provided in our previous papers \cite{AKMPP,  AKMPP-Selecta, AKMPP-JJM,   AMPP-Advances}. In particular, the paper \cite{AKMPP-JJM} shows decompositions of minimal affine $W$--algebras $W_k(\g,x,f_{\theta})$. It turns out that we can generalize some results of   \cite{AKMPP-JJM} to hook type $W$--algebras.

\begin{theorem} \label{decomp-intr}   (cf. Theorem \ref{decomp})  
% ( {\color{blue} Assuming that  Lemma \ref{non-zero-2} holds}).
 Let $k=k_{m,n}^{(i)}$ for  $i=1,2$  and assume also that    $k$ is non-collapsing.   Assume that $\frac{m+1}{n-1} \notin {\Z}$  if  $i=1$ and $ \frac{m}{n+1}  \notin {\Z}$ if $i=2$. Then
 
\bea\label{deco} W_k(\g, x,  f_{m,n}) = \bigoplus_{i \in {\Z}}  W_k ^{(i)}, \eea
where each $W_k ^{(i)} = \{ v \in  W_k \ \vert \ J(0) v = i v \}$ is an irreducible $V(gl(n))$--module (cf. \eqref{J} for the definition of $J$).  
In particular, 	decomposition  \eqref{deco}  holds if $k$ is admissible and $n >2$.
\end{theorem}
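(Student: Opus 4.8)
The plan is to reduce the decomposition to an application of the ``charge'' operator $J(0)$ and then to identify each charged piece as an irreducible highest--weight module for the affine vertex algebra $V(gl(n))$ sitting inside $W_k(\g,x,f_{m,n})$. First I would recall from Theorem~\ref{41} the explicit structure of $W_k(\g,x,f_{m,n})$ at arbitrary level: as a vertex algebra it is strongly generated by $\g^\natural\cong gl(n)$ (equivalently by the affine subalgebra $\mathcal V(\g^\natural)$), together with fields carrying nonzero $J(0)$--eigenvalue, where $J$ is the $gl_1$--current distinguished inside $\g^\natural$ (see \eqref{J}). Since $k=k_{m,n}^{(i)}$ for $i=1,2$ is a conformal level and $k$ is assumed non--collapsing, Theorem~\ref{MT-intr} and the criterion of Theorem~\ref{Criterion-intr} guarantee that $\mathcal V(\g^\natural)\hookrightarrow W_k(\g,x,f_{m,n})$ is a conformal embedding, so $L=L^{\g^\natural}$ on the simple quotient; in particular the $L(0)$--grading and the $J(0)$--grading are compatible, and each eigenspace $W_k^{(i)}=\{v\mid J(0)v=iv\}$ is a $\mathcal V(\g^\natural)$--submodule. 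This already yields the direct sum decomposition \eqref{deco} at the level of vector spaces; the content is the \emph{irreducibility} of each $W_k^{(i)}$ as a $V(gl(n))$--module.

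Next I would pin down the top component of each $W_k^{(i)}$. Using the description of the strong generators from Theorem~\ref{41}, the generators of $J(0)$--charge $+1$ (resp.\ $-1$) transform in a fixed irreducible $gl(n)$--module $\Lambda$ (resp.\ its dual), of conformal weight determined by the structure of the hook, and their normally ordered products generate, charge by charge, the modules $W_k^{(i)}$. The module $W_k^{(i)}$ for $i>0$ is generated over $\mathcal V(\g^\natural)$ by a single ``extremal'' vector obtained as the top of the $i$--fold fusion of the charge--$1$ generators, and the arithmetic hypothesis $\tfrac{m+1}{n-1}\notin\Z$ (for $i=1$) or $\tfrac{m}{n+1}\notin\Z$ (for $i=2$) is precisely what forces this fusion to stay irreducible — it prevents the appearance of a singular vector that would otherwise produce a proper submodule, exactly as in the analogous argument of \cite{AKMPP-JJM}. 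I would then invoke the simplicity of $W_k(\g,x,f_{m,n})$: any proper $V(gl(n))$--submodule of some $W_k^{(i)}$ would, together with the $J(0)$--charged generators, generate a proper vertex algebra ideal of $W_k$, contradicting simplicity — this is the standard mechanism for transferring simplicity of the big algebra to irreducibility of the coset/charged pieces, as in our earlier papers.

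The main obstacle I anticipate is the irreducibility argument for the charged pieces: one must show that the highest--weight $V(gl(n))$--module with the prescribed top component $W_k^{(i)}$ is actually \emph{irreducible} at the specific non--admissible level $k^\natural=\beta_k$, which a priori could carry nontrivial singular vectors. This is where the number--theoretic conditions on $\tfrac{m+1}{n-1}$ and $\tfrac{m}{n+1}$ do the work: translating the potential singular vectors into weight/level conditions via the Kac--Kazhdan determinant formula (or via the explicit known structure of $V^{\beta_k}(gl(n))$ at these levels), one checks that integrality of these ratios is exactly the obstruction, so their non--integrality yields irreducibility. Finally, for the last sentence of the statement, when $k$ is admissible and $n>2$ one checks directly that $k_{m,n}^{(1)}$ and $k_{m,n}^{(2)}$ being admissible forces the corresponding ratio to be non--integral (and non--collapsing follows from Theorem~\ref{non-collapsing-admissible}), so the hypotheses of the theorem are automatically met and \eqref{deco} applies.
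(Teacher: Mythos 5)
There is a genuine gap, and it sits exactly at the two places where your plan claims irreducibility. First, the simplicity-transfer step is not valid as stated: in a \emph{simple} vertex algebra every nonzero subspace generates the whole algebra as an ideal, so a proper $V(gl(n))$--submodule of some $W_k^{(i)}$ does not ``generate a proper ideal of $W_k$''; simplicity of $W_k$ by itself gives (at best, via a quantum Galois/graded-simplicity type argument) simplicity of the pieces over the full charge-zero subalgebra $W_k^{(0)}$, not over $V(gl(n))$. The whole content of the theorem is precisely that $W_k^{(0)}=\mathcal V(\g^{\natural})$, i.e.\ that there are no extra charge-zero primaries; your assertion that each $W_k^{(i)}$ is generated over $\mathcal V(\g^{\natural})$ by a single extremal vector coming from an $i$-fold fusion of the charge-one generators is exactly the statement that needs proof, not an available input.

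Second, the arithmetic hypothesis does not act where you put it. The paper does not analyze singular vectors of highest-weight $V^{k_1}(sl(n))$--modules via Kac--Kazhdan (the summands are by definition the simple quotients $L^{sl(n)}_{k_1}(i\omega_1)\otimes M(k_0,i)$, so their irreducibility is not the issue). Instead, by the tensor decomposition \eqref{tensor-1}, a failure of the fusion inclusion $A^{+}\cdot A^{-}\subset \mathcal V(\g^{\natural})$ would force a $\mathcal V(\g^{\natural})$--primitive vector of $sl(n)$--weight $\mu=\omega_1+\omega_{n-1}$ inside $W_k^{(0)}$, and such a vector must have conformal weight $h^{(1)}_{\mu}=m+1+\tfrac{m+1}{n-1}$ (case $i=1$) or $h^{(2)}_{\mu}=m-\tfrac{m}{n+1}$ (case $i=2$). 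Since all conformal weights in the charge-zero sector are integers, the non-integrality hypotheses exclude this vector; hence $A^{+}\cdot A^{-}\subset\mathcal V(\g^{\natural})$, which (every charge-zero element being a product of equally many $A^{+}$ and $A^{-}$ factors) gives $W_k^{(0)}=\mathcal V(\g^{\natural})$, and then simplicity of the vertex algebra $W_k^{(0)}$ yields that $\mathcal V(\g^{\natural})=V(\g^{\natural})$ is simple and that each $W_k^{(i)}$ is a simple $V(\g^{\natural})$--module; non-collapsing enters to guarantee $A^{+}\cdot A^{-}\neq\{0\}$ and that the charged pieces are nontrivial (this is Lemma \ref{criterion}, following the mechanism of \cite{AKMPP-JJM}). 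Your treatment of the last assertion (admissible $k$, $n>2$: coprimality forces the ratio to be non-integral and Theorem \ref{non-collapsing-admissible} gives non-collapsing) is correct, but the core irreducibility argument must be replaced by the fusion argument above.
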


\subsection{Future work}
The main purpose of the present paper is to introduce new general methods for studying conformal embeddings and collapsing levels for affine $W$--algebras. As an illustration, we apply our methods for the hook  and rectangular type $W$-algebras of type $A$.
 It is worthwhile to note that, since the present paper appeared in preprint form, our results on collapsing levels have already been applied 
by physicists in the context of four dimensional $N = 2$ superconformal field theories \cite{XY}.\par

 The same methods can  also be applied for hook type $W$--algebras of types $B, C, D$ and for hook type $W$--superalgebras (see \cite[Table 1]{CLNS} for the list of such vertex algebras). We hope to study these cases in our forthcoming papers.

The next important problem is to study decompositions of conformal embeddings. As we see in present paper, the first difficult step is to prove that a  given  conformal level is not collapsing. We obtained results in this direction using the fact that a  certain  conformal level is admissible, but  we hope that our arguments can be extended to non-admissible cases. A natural approach would be to use   $\lambda$--bracket for $W$--algebras. This approach works for algebras of low rank (cf. \cite{AMP-21}, \cite{JF}), where one can use  computer calculations. 

It was proved in  \cite{ACGY} that  $W_{k_{p-1,2} ^{(1)}} (sl(p+1), f_{p-1,2}) =  \mathcal R^{(p)}$, where $\mathcal R^{(p)}$ is a certain logarithmic vertex algebra realized  in \cite{A-TG} as an extension of $L_{-2+1/p}(gl(2))$. We believe that recent explicit realizations, motivated by inverting of quantum Hamiltonian reduction (as in the case of  $L_k(sl(3))$ from \cite{ACG}), can be used to get explicit decompositions for  conformal embeddings in some non-collapsing cases.

 We believe that at each collapsing level, the category $KL_k(\g)$ (cf. \cite[Definition 2.1]{ AMP-super}) is  semi-simple. This is proved   in \cite{AKMPP-IMRN, AMPP-Advances}   
 in the case of collapsing levels for the minimal reduction. The same is true for some collapsing levels in non-minimal cases (cf. \cite{APV21, ACPV22}). In all these examples we also have that $KL_k(\g)$ is a rigid braided tensor category \cite{CY}.  The proof of rigidity uses the conformal embedding of $\mathcal V(\g)$ into affine vertex algebras or minimal affine $W$--algebras. Our paper provides some tools for  a natural generalization. Decompositions in Theorem \ref{decomp-intr}  suggest that the $V(gl(n))$--modules in  (\ref{deco}) are simple currents in a suitable tensor category. Unfortunately, our current methods are not sufficient to prove such statement. We hope to study this problem elsewhere.

  \vskip 5mm
  \centerline{{\bf Acknowledgements.}}
     \vskip10pt
The authors thank the referees for their careful reading of our paper and for suggestions which improved our exposition. \par
D.A.   is  partially supported   by the
QuantiXLie Centre of Excellence, a project cofinanced
by the Croatian Government and European Union
through the European Regional Development Fund - the
Competitiveness and Cohesion Operational Programme
(KK.01.1.1.01.0004).

\section{Setup}
 \subsection{Conformal vertex algebras} Recall that a vector superspace  is a $\mathbb Z/2\Z$--graded vector space
 $W=W_{\bar{0}}\oplus W_{\bar{1}}$. The elements in $W_{\bar{0}}$ 
(resp. $W_{\bar{1}}$) are called even (resp. odd). Set
$$p(v)=\begin{cases}0\in\Z&\text{ if $v\in W_{\bar{0}}$},\\ 1 \in\Z&\text{ if $v\in W_{\bar{1}}$.}
\end{cases}
$$
We will regard  $p(v)$ as an integer, not as a residue class. We will often use the notation
\begin{equation}\label{s}\s(u)=(-1)^{p(u)}u,\qquad p(u,v)=(-1)^{p(u)p(v)}.\end{equation}

Let $W$ be a  vertex algebra.  We let 
 \begin{align}\label{0a3}
& Y:W \to (\mbox{End}\,W)[[z,z^{-1}]] ,\\
& v\mapsto Y(v,z)=\sum_{n\in{\Z}}v_{n}z^{-n-1}\ \ \ \  (v_{n}\in
\mbox{End}\,W),\notag
\end{align}
denote the state--field correspondence. We denote by $\vac$ the vacuum vector in $W$ and by $T$ the translation operator.

\begin{defi}\label{svoa}  A  {\it conformal} vertex algebra is a vertex algebra  $W$  such that  there exists a distinguished vector $L\in
W_2$,  called a Virasoro vector, satisfying the following conditions:
\begin{align} \label{0a4}
& Y(L,z)=\sum_{n\in\Z}L(n)z^{-n-2},\ [L(m),L(n)]=(m-n)L(m+n)+\frac{1}{12}(m^3-m)\delta_{m+n,0}c\,I ,\\
& L(-1)=T,\\
& \text{$L(0)$ is diagonalizable and its eigenspace decomposition has the form}\end{align}
\begin{equation}\label{g2.1}
W=\bigoplus_{n\in{ \frac{1}{2}\mathbb Z_+}}W(n),
\end{equation}
where 
\begin{equation}\label{dimf}\dim W(n)< \infty \text{ for all $n$ and $W(0)=\C\vac$}.
\end{equation}
The number $c$ is called the {\it central charge}.
\end{defi}
\subsection{Affine vertex algebras}\label{ava}
 Let $\aa$ be a Lie superalgebra equipped with an  invariant supersymmetric bilinear form $B$. The universal affine vertex algebra $V^B(\aa)$ is  the universal enveloping vertex algebra of  the non--linear  Lie conformal superalgebra $R=(\C[T]\otimes\aa)$ with $\lambda$-bracket given by
$$
[a_\lambda b]=[a,b]+\lambda B(a,b),\ a,b\in\aa.
$$
In the following, we shall say that a vertex algebra $V$ is an affine vertex algebra if it is a quotient of some $V^B(\aa)$.
%, and we use for $V$ the notation $\mathcal V(\aa)$ is $B$ is clear from the context. 
 If $k\in \C$ and $\aa$ is simple,  we will write simply $V^k(\aa)$ for $V^{k(\cdot |\cdot)}(\aa)$, where $(\cdot |\cdot)$ is a fixed normalized bilinear form. We  will always assume that $k$ is non--critical, i.e. $k\ne - h^\vee$. With this assumption, it is known that $V^k(\g)$ has a unique simple quotient, denoted by   $V_k(\g)$  (see \cite[4.7 and Example 4.9b]{K2}). The vertex algebras $V^k(\g), V_k(\g)$ are VOAs with Virasoro vector $L_\g$ given by the Sugawara construction.

\subsection{Invariant Hermitian forms on conformal vertex algebras} 
 Let $W$ be a conformal vertex algebra. If $a\in W(\D_a)$, set 
\begin{align}
(-1)^{L(0)}a&=e^{\pi\sqrt{-}1\D_a}a,\quad\s^{1/2}(a)=e^{\frac{\pi}{2}\sqrt{-}1p(a)}a.
\end{align}

If  $\phi$ is a conjugate linear involution of $W$, set 
 \begin{equation}\label{gamma}g=((-1)^{L(0)} \s^{1/2})^{-1} \phi.\end{equation} 
 Recall from \cite{KMP} the following definition.
 \begin{defi}\label{invariant_form}
Let $\phi$ be a conjugate linear involution of a conformal  vertex algebra $W$ such that $\phi(L)=L$. Let $g$ be as in \eqref{gamma}. A Hermitian form $(\, \cdot \,\, , \, \cdot\, )$ on $W$ is said to be \emph{$\phi$--invariant} if, for all $a\in W$, 
\begin{equation}\label{i}
(v,Y(a, z)u)=(
Y(e^{zL(1)}  z^{-2L(0)}ga, z^{-1})v,u),\quad u,v\in W.
\end{equation}
\end{defi}
\begin{rem}\label{existsinv}
The existence conditions for a $\phi$--invariant Hermitian form on a  vertex operator algebra $W$ are discussed in \cite[Theorem 4.3]{KMP}. For a conformal vertex algebra 
these conditions reduce to 
\begin{equation}\label{excond}
L(1)W(1)=\{0\}.
\end{equation}
In such a case we normalize the form by requiring $(\vac,\vac)=1$.
\end{rem}

\begin{rem}\label{ker} The kernel of (any) $\phi$--invariant Hermitian form on a conformal vertex algebra  coincides with its maximal  ideal.
\end{rem}

Recall that $\mathfrak a=W(1)$ is a Lie superalgebra with bracket defined by
$$
[a,b]=a_{0}b.
$$
Let  $\langle\cdot,\cdot\rangle$ be the bilinear form on $\aa$ defined by 
$$
\langle a,b \rangle=(g(a),b).
$$
If \eqref{excond} holds, $\aa$ is made of primary elements. It follows that
$$
\langle a, b\rangle\vac=(g(a),b)\vac=(g(a)_{-1}\vac,b_{-1}\vac)\vac=(\vac,a_{1}b)\vac=a_{1}b,
$$
so that we can write
\begin{equation}\label{aff}
[a_\l b]=[a,b]+\l \langle a,b\rangle\vac.
\end{equation}
Note that the form $\langle\cdot,\cdot\rangle$ is a supersymmetric invariant form.
Indeed
$$
\langle b,a \rangle=(g(b),a)=\overline{(a,g(b))}=\overline{g(a)_{1}g(b)}.
$$
As shown in Lemma 3.1 of \cite{KMP}, one has
\begin{equation}\label{gg=g}
g(v_{n}u)=(-1)^{n+1}p(v,u)g(v)_{n}g(u),
\end{equation}
so 
$$\langle b,a \rangle=p(a,b)\overline{g(a_{1}b)}=p(a,b)a_{1}b=p(a,b)(g(a),b)=p(a,b)\langle a,b\rangle.
$$
Since 
$$\langle a,[b,c]\rangle=\langle a,b_{0}c\rangle=( g(a),b_{0}c)=( g(b)_{0}g(a),c),
$$
 using \eqref{gg=g} again, we find
$$\langle a,[b,c]\rangle=-p(b,a)(g(b_{0}a),c)=-p(b,a)\langle[b,a],c\rangle=\langle[a,b],c\rangle.
$$
Therefore, by \eqref{aff}, the vertex subalgebra generated by $\mathfrak a$ is an affine vertex algebra that,  in this section,  we denote by $V(\mathfrak a)$.
 
We assume that $\aa=\bigoplus_{i=0}^r \aa_i$ with $\aa_0$ an even abelian Lie algebra (possibly $\{0\}$) and $\aa_i$ simple Lie algebras or basic Lie superalgebras.
\begin{lemma}\label{ideala}
Let $\aa'$ be the kernel of $\langle\cdot,\cdot\rangle$. Then $(\aa')_{-1}\vac$ generates a proper ideal $I_{\aa'}$ of $W$.
\end{lemma}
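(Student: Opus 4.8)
The plan is to show that $(\aa')_{-1}\vac$ generates a proper ideal by exhibiting a nonzero quotient of $W$ that kills it, namely the one obtained from the $\phi$--invariant Hermitian form. Since $\aa'$ is the kernel of $\langle\cdot,\cdot\rangle$, each $a\in\aa'$ is (by definition of $\langle\cdot,\cdot\rangle$) orthogonal to all of $\aa=W(1)$ with respect to the invariant form on $W$, i.e. $(g(a),b)=0$ for all $b\in W(1)$. First I would promote this to the statement that $a_{-1}\vac$ lies in the kernel $\K$ of the $\phi$--invariant form on $W$. For this I would use invariance \eqref{i}: for $v\in W$, $(v, a_{-1}\vac)=(v,Y(a,z)\vac)|_{z^0}$, and the right-hand side of \eqref{i} expresses this in terms of $(Y(e^{zL(1)}z^{-2L(0)}ga,z^{-1})v,\vac)$; because $a\in W(1)$ is primary (here I invoke \eqref{excond}, $L(1)W(1)=\{0\}$, which is in force since we are assuming a $\phi$--invariant form exists), the operator $e^{zL(1)}z^{-2L(0)}$ acts on $a$ simply as $z^{-2}$, so the pairing reduces to the coefficient extracting $(g(a)_{0}v,\vac)$ up to scalars. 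Evaluating against the vacuum and using that $(\cdot,\vac)$ is supported on $W(0)$, this pairing only sees components of $g(a)_0 v$ (or $g(a)_1 v$, depending on the mode bookkeeping) in $W(0)=\C\vac$, which by the adjoint/invariance identity equals $\overline{(v,a_{\ast}\vac)}$-type terms — ultimately $(v,a_{-1}\vac)$ is governed by the form $\langle\cdot,\cdot\rangle$ evaluated on $a$ against $W(1)$, hence vanishes. Thus $a_{-1}\vac\in\K$ for all $a\in\aa'$.

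Next I would use Remark \ref{ker}: the kernel $\K$ of a $\phi$--invariant Hermitian form on a conformal vertex algebra is the maximal (proper) ideal of $W$. Since $\aa'_{-1}\vac\subset\K$, the ideal $I_{\aa'}$ generated by $\aa'_{-1}\vac$ is contained in $\K$, and $\K\subsetneq W$ (it is proper because $(\vac,\vac)=1\neq0$ after normalization, so $\vac\notin\K$). Therefore $I_{\aa'}$ is proper. Strictly speaking one must first check that a $\phi$--invariant form exists at all; this is exactly the content of Remark \ref{existsinv}, i.e. condition \eqref{excond}, and since that condition is precisely what makes the bracket \eqref{aff} and the form $\langle\cdot,\cdot\rangle$ well defined in the first place, it is legitimate to assume it throughout this subsection. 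If one wanted to avoid invoking the Hermitian form, an alternative is purely algebraic: show directly that the span of $\{a_{-1}\vac : a\in\aa'\}$ together with its translates $T^j a_{-1}\vac$ is preserved under all modes $b_n$ with $b\in\aa$ (using $[b_m, a_n]=(b_0 a)_{m+n}+m\langle b,a\rangle\delta_{m+n,0}$ and $b_0 a\in\aa'$ since $\aa'$ is an ideal of $\aa$ — which holds because the kernel of an invariant form on a reductive-type Lie superalgebra $\aa=\bigoplus_i\aa_i$ is a sum of some of the $\aa_i$, hence an ideal), then argue the generated ideal is proper because it is concentrated in the ``wrong'' graded pieces relative to the decomposition of $W$ into $V(\aa)$-isotypic components; but the Hermitian-form route is cleaner.

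The main obstacle is the careful mode bookkeeping in the invariance identity \eqref{i} needed to conclude $a_{-1}\vac\in\K$: one has to correctly track the action of $e^{zL(1)}z^{-2L(0)}$ on a weight-one primary vector, the sign/phase twist hidden in $g=((-1)^{L(0)}\s^{1/2})^{-1}\phi$, and the substitution $z\mapsto z^{-1}$, then match the $z$-degree so that only the $W(0)$-component of the output pairs nontrivially with $v$ — and see that the resulting scalar is exactly $\langle a,\text{(component of }v\text{ in }W(1))\rangle$ up to a nonzero constant. Once that identification is in place, everything else (properness via $(\vac,\vac)\neq0$, $I_{\aa'}\subseteq\K$, invoking Remark \ref{ker}) is immediate. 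A secondary point to be careful about is that $\aa'$ being an ideal of $\aa$ (needed for the generated ideal to stay inside $\bigoplus_j\C T^j\aa'_{-1}\vac$ before passing to the full vertex-algebra ideal) relies on the structure hypothesis $\aa=\bigoplus_{i=0}^r\aa_i$ stated just before the lemma, so the kernel of $\langle\cdot,\cdot\rangle$ is $\bigoplus_{i\in S}\aa_i$ for some subset $S$ — in particular it is a direct sum of simple/basic summands and an abelian piece, hence an ideal.
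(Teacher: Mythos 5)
Your proposal is correct and follows essentially the same route as the paper: both arguments come down to showing that $\aa'=\aa'_{-1}\vac$ lies in the kernel of the $\phi$--invariant Hermitian form (which is proper since $(\vac,\vac)=1$) and then invoking Remark \ref{ker} to conclude that the ideal it generates is contained in the maximal ideal. The one point your ``mode bookkeeping'' must not gloss over is that the invariance identity \eqref{i} produces the scalar $\langle g(a),v\rangle$ rather than $\langle a,v\rangle$ — i.e. $a$ appears twisted by $g$ (essentially $\phi$) — so to get vanishing from $a\in\aa'$ you need either $\phi(\aa')=\aa'$, which the paper deduces from $\langle\phi(a),\phi(b)\rangle=\overline{\langle a,b\rangle}$ (its key observation, after which it simply uses $(\aa',\aa)=(g(\aa'),\aa)=\langle\aa',\aa\rangle=0$ together with $(W(i),W(j))=0$ for $i\neq j$), or the supersymmetry of $\langle\cdot,\cdot\rangle$ established just before the lemma, which makes the kernel insensitive to the slot in which $a$ sits.
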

\begin{proof}
Since $\phi$ is an automorphism of $W$, if $a,b\in\aa$, then
$$
\phi([a_\l b])=[\phi(a)_\l\phi(b)]
$$
so
$\phi(\langle a,b\rangle\vac)=\langle \phi(a),\phi(b)\rangle\vac$
hence
$
\langle\phi(a),\phi(b)\rangle=\overline{\langle a,b\rangle}$. In particular $\phi(\aa')=\aa'$. It follows that
$$
(\aa',\aa)=(g(\aa'),\aa)=\langle \aa',\aa\rangle=0.
$$
Since $\aa=W(1)$ and $\aa'\subset W(1)$, we have $(\aa',W)=0$, hence $\aa'$ is in the kernel of the form $(\cdot,\cdot)$. We conclude using Remark \ref{ker}.
\end{proof}
\subsection{Casimir elements and conformal vectors}  Let $\langle\cdot,\cdot\rangle_i$ be the nondegenerate invariant form on $\aa_i$ normalized as follows: if $\aa_i$ is an even simple Lie algebra then require $\langle\theta_i,\theta_i\rangle=2$ (where $\theta_i$ is a long root of $\aa_i$).  If $\aa_i$ is not even, then we let $\langle\cdot,\cdot\rangle_i$ be the form described explicitly in Table 6.1 of of \cite{Kw}. Set $\aa'_0=\aa'\cap\aa_0$.
On $\aa_0$ we choose a basis $\{a_i\}$ such that $\langle a_i,a_j\rangle =0$ for $i\ne j$ and 
$$\langle a_i,a_i\rangle =\begin{cases}0&1\le i\le \dim\aa'_0,\\
1& \dim\aa'_0<i\le \dim\aa_0.
\end{cases}
$$
Set $\aa''_0=span(a_i\mid i>\dim\aa'\cap\aa_0)$. Set $\langle\cdot ,\cdot\rangle_0=\langle\cdot,\cdot\rangle_{|\aa_0\times\aa_0}$.
%and let $\langle\cdot,\cdot\rangle_0$ to be the bilinear symmetric form such that $\langle a_i,a_j\rangle_0=\d_{ij}$.
Since $\langle\cdot, \cdot\rangle$ is invariant we have 
\begin{equation}\label{formdegenerate}
\langle\cdot,\cdot\rangle_{|\aa_i\times\aa_i}=k_i\langle\cdot,\cdot\rangle_i\ (i\ge1).
\end{equation}
If $\langle\cdot,\cdot\rangle_0=0$ we let $k_0=0$ and $k_0=1$ otherwise, so that \eqref{formdegenerate} holds also for $i=0$.

It follows that $V(\aa)$ is a quotient of $V^0(\aa'_0)\otimes V^1(\aa''_0) \otimes(\bigotimes_{i>0} V^{k_i}(\aa_i))$ 
%(where $V^{k}(\g)$ is the universal affine vertex algebra at level $k$ attached to $\g=\aa'_0,\aa''_0,\aa_i$).
We assume that $k_i\in\R$ for all $i$.

For $i>0$, let $C_{\aa_i}$ be the Casimir element of $\aa_i$ corresponding to $\langle\cdot,\cdot\rangle_i$ and let $2h_i^\vee$ be the eigenvalue of its action on $\aa_i$. For $i=0$ we let 
$$C_{\aa_0}=\begin{cases}0\quad&\text{if $\aa_0=\aa'_0$,}\\
\sum\limits_{i=\dim\aa'_0+1}^{\dim\aa_0}a_i^2 \quad&\text{otherwise,}
\end{cases}$$
 so that $h^\vee_0=0$.

We assume that, for $i>0$,  $k_i$ are non-critical, i.e. $k_i\ne - h^\vee_i$. This implies that $V^{k_i}(\aa_i)$ admits a Virasoro vector $L^{\aa_i}$ given by Sugawara construction. We set also 
$$L^{\aa_0}=\half \sum\limits_{i=\dim\aa'_0+1}^{\dim\aa_0}:a_ia_i:.
$$ 

This is a Virasoro vector for $V^1(\aa_0'')$.
Define $L^\aa\in W$ to be the image of
$\sum_{i\ge 0}L^{\aa_i}$ in $V(\aa)$. In general this is not a Virasoro vector for $V(\aa)$. Let $I_{\aa'_0}$ be the ideal generated by $\aa'_0$. By Lemma \ref{ideala}, the ideal $I_{\aa'_0}$ is proper. Let $\pi_{\aa'_0}:W\to W/I_{\aa'_0}$ be the quotient map.  Then $\pi_{\aa'_0}(L^\aa)$ is a Virasoro vector for  $\pi_{\aa'_0}(V(\aa))$.
Note that
$$
L^{\mathfrak a}(2)L^{\mathfrak a}=\tfrac{1}{2}c_{\mathfrak a}\vac.
$$
By a slight abuse of terminology, we call $c_\aa$ the {\it central charge} of $L^\aa$.

\section{A criterion for conformal embedding of affine vertex algebras into    conformal vertex algebras}

Let $W$ be  a conformal vertex algebra with conformal vector $L$. Make the following assumptions on $W$:
\begin{align}
\label{h1} &\text{there is a conjugate linear involution  $\phi$ of $W$ with $\phi(L)=L$;}\\
\label{h2} &W(\tfrac{1}{2}) = \{0\};\\
\label{h3} &\aa=W(1)\text{ consists of $L$--primary elements.} 
\end{align}
By Remark \ref{existsinv}, there is a $\phi$-invariant Hermitian form $(\cdot,\cdot)$ on $W$ such that $(\vac,\vac)=1$. \par 

 We say that a subset $\mathfrak V$ of a vertex algebra $W$, homogeneous with respect to parity, strongly generates $W$ if 
$$
W=span(:T^{j_1}(w_{1})\cdots T^{j_r}(w_{r}):\mid r\in\ZZ_+,\ w_i\in\mathfrak V).
$$ 
 
 Our first result is the following general criterion for the existence of a conformal embedding 
$$ \widetilde V(\mathfrak a) \hookrightarrow \overline W,$$  
where $\overline W=W/I$ is a nonzero quotient of $W$ and  $\widetilde V(\mathfrak a) =   V(\mathfrak a)  / (I \cap V(\mathfrak a) )$. It is clear that such an embedding exists if and only if $L-L^\aa$ generates a proper ideal of $W$.   
\begin{theorem}\label{Criterion}
Assume that $W$ is strongly generated by $\mathfrak a$ and by  $\{L-L^\aa\}\cup S$ with $S$ homogeneous with respect to the gradation \eqref{g2.1} and such that
 $(L-L^{\mathfrak a})(2)X=0$ for $X\in S\cap W(2)$.
Then $(L-L^{\mathfrak a})$ generates a  proper ideal $I$ in $W$ if and only if $c=c_\aa$.
\end{theorem}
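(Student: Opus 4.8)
The plan is to prove the two implications separately, with the content concentrated in the direction ``$c=c_\aa$ implies $L-L^\aa$ generates a proper ideal.'' Set $\bar L = L - L^\aa$. The easy direction is that if $\bar L$ generates a proper ideal $I$, then in $\overline W = W/I$ the images of $L$ and $L^\aa$ coincide; since $\overline W$ is nonzero and the $\phi$-invariant form descends (using Remark~\ref{existsinv} and the fact that the hypotheses \eqref{h1}--\eqref{h3} pass to the nonzero quotient), comparing $(\bar L(2)\bar L, \vac)$ forces $\frac12 c\,\vac \equiv \frac12 c_\aa\,\vac$ in $\overline W$, hence $c = c_\aa$. For this I would first record that $\bar L = L - L^\aa$ is a primary (lowest-weight) vector of conformal weight $2$ for the Virasoro $L$ when $c=c_\aa$: indeed $L^\aa$ is a conformal vector of central charge $c_\aa$ commuting appropriately with the affine fields, and $\bar L(0)=L(0)-L^\aa(0)$ acts as $0$ on $\aa=W(1)$ because the $L^\aa$-weight of a simple affine current equals $1$; combined with $\bar L(n)\vac=0$ for $n\geq -1$ and the Virasoro commutation relations, $\bar L(2)\bar L = \tfrac12(c-c_\aa)\vac = 0$, which is exactly the singular-vector computation.

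For the main implication, assume $c=c_\aa$, so $\bar L$ is a singular vector (annihilated by $L(n)$, $n\geq1$, weight $2$). I would consider the ideal $I = W.\bar L$ generated by $\bar L$ and argue it is proper, equivalently that $\vac\notin I$. The key tool is the $\phi$-invariant Hermitian form $(\cdot,\cdot)$ from Remark~\ref{existsinv}: by Remark~\ref{ker} its radical is the unique maximal ideal $M$ of $W$, so it suffices to show $\bar L\in M$, i.e. $(\bar L, w)=0$ for all $w\in W$. Because the form is $\phi$-invariant and $\bar L$ has weight $2$, it is enough to test against a spanning set of $W(2)$. Here is where the strong-generation hypothesis enters decisively: $W$ is strongly generated by $\aa$ and $\{\bar L\}\cup S$, so $W(2)$ is spanned by (i) normally ordered quadratics and derivatives built from $\aa$, namely $:ab:$ and $T a$ with $a,b\in\aa$ (these, together with $a\in\aa$ of weight one and the vacuum, exhaust the part coming from $\aa$), (ii) $\bar L$ itself, and (iii) elements of $S\cap W(2)$. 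One computes $(\bar L, :ab:)$, $(\bar L, Ta)$, $(\bar L,\bar L)$, and $(\bar L, X)$ for $X\in S\cap W(2)$ and shows each vanishes. The first family vanishes because $\bar L$ is $L$-primary of weight $2$ while $:ab:$ and $Ta$ lie in the submodule generated by lower-weight vectors — more concretely, $(\bar L, Ta) = (L(-1)^\dagger \bar L, a)$ up to the form's twist, and $L(-1)^\dagger$ acts through $L(1)$ which kills $\bar L$; similarly $(\bar L, :ab:) = (\bar L, a_{-1}b)$ is rewritten via invariance so that a positive mode of $\bar L$ (which annihilates everything of weight $\le 1$, in particular $\aa$, because $\bar L(0)\aa=0$ and $\bar L$ is primary) hits $b$. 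The term $(\bar L,\bar L)=\tfrac12(c-c_\aa)=0$ by hypothesis on $c$, realized through $(\bar L(2)\bar L,\vac)$. Finally $(\bar L, X)=0$ for $X\in S\cap W(2)$ is precisely the remaining assumption $(L-L^\aa)(2)X=0$, again via the formula $(\bar L, X)\vac = (\bar L(2) X)$ coming from weight reasons and the definition of the invariant form.

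The main obstacle — and the step I would spend the most care on — is the bookkeeping that makes ``test against $W(2)$'' rigorous: one must (a) justify that it suffices to check $(\bar L, w)=0$ only on the listed strong-generator-type spanning set of $W(2)$ rather than all of $W$ (this uses that $(\bar L,\cdot)$ vanishes on $W(n)$ for $n\neq 2$ by $L(0)$-homogeneity of the form, plus Remark~\ref{ker} to upgrade ``$\bar L$ in the radical'' to ``$\bar L$ generates a proper ideal''), and (b) carefully translate each pairing $(\bar L, w)$ into an application of a mode of $\bar L$ to $w$, using the invariance identity \eqref{i} with its $e^{zL(1)}z^{-2L(0)}g$ twist, and then invoke that $\bar L$ is primary/singular to kill it. A subtlety is that $L^\aa$ is only a genuine Virasoro vector after quotienting by $I_{\aa'_0}$ (as noted in \S\ref{ava}), so one should either work modulo $I_{\aa'_0}$ throughout — harmless since $I_{\aa'_0}\subseteq M$ — or check directly that the identities $\bar L(n)a = \delta_{n,1}(\dots)$ needed above hold in $W$ itself. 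Once these technical points are in place, the equivalence follows, and the final reformulation ``$W/I\neq 0$ and $V(\aa)$ embeds conformally'' is immediate since conformality of the embedding is by definition the statement $\pi_I(L)=\pi_I(L^\aa)$.
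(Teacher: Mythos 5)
Your plan is essentially the paper's own proof: you show $L-L^\aa$ lies in the radical of the $\phi$-invariant Hermitian form (which by Remark~\ref{ker} is the maximal proper ideal) by decomposing $W(2)$, via strong generation, into the affine part, $\C(L-L^\aa)$ and $S\cap W(2)$, killing the three pairings respectively by the commutant/primarity of $\aa$, by $c=c_\aa$ through $(L-L^\aa)(2)(L-L^\aa)=\tfrac12(c-c_\aa)\vac$, and by the hypothesis $(L-L^\aa)(2)X=0$, with the converse obtained from the same identity in the nonzero quotient. The subtlety you flag about the degenerate part of the form is handled in the paper exactly as you suggest, by first passing to the proper ideal $I_{\aa'}$ (together with the check $\phi(L^\aa)=L^\aa$ needed for the invariance computations), so there is no substantive difference in approach.
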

\begin{proof}
As noted in the proof of Lemma \ref{ideala},
$
\langle\phi(a),\phi(b)\rangle=\overline{\langle a,b\rangle}$. In particular $\phi(\aa')=\aa'$, thus the ideal $I_{\aa'}$ is $\phi$--stable. Let $\pi_{\aa'}:W\to W/I_{\aa'}$ be the projection. It is clear that the ideal $I$ generated by $L-L^\aa$ is proper in $W$ if $\pi_{\aa'}(I)$ is proper in $W/I_{\aa'}$, we can therefore assume that $\aa'=\{0\}$, so that $k_i\ne 0$ for all $i$.
Moreover, if $\phi(\aa_i)=\aa_j$, then, since $k_i\in\R$, we have $k_i=k_j$ and for $a,b\in\aa_i$,  
\begin{equation}\label{<phi>}\langle \phi(a),\phi(b)\rangle_j=\overline{\langle a,b\rangle}_i.
\end{equation}

Let $\{x_i\}$ be a basis of $\aa$ and let $\{x^i\}$ its dual basis with respect to $\bigoplus\limits_{i}\langle\cdot,\cdot\rangle_i$. Then, by \eqref{<phi>}, 
$$\phi(\sum :x^ix_i:)=\sum :\phi(x^i)\phi(x_i):=\sum :x^ix_i:,
$$
so $\phi(L^{\aa})=L^{\aa}$.

Set $U=span(X\in S\mid X\in W(2))$. Observe that 
\begin{equation}\label{f1}W(2)=\C(L-L^{\aa})+U+V(\aa)\cap W(2).
\end{equation}
We have
\begin{equation}\label{f2}
(L-L^{\aa},U)=(\vac,(L^{\aa}-L^{\aa})(2)U)=0
\end{equation}
by our assumption that $
(L-L^{\aa})(2)U=0$.

If $c=c_{\mathfrak a}$ then
\begin{align}\notag
(L-L^{\aa},L-L^{\aa})&=
((L-L^{\aa})(2)(L-L^{\aa}),\vac)=\half \overline{(c-c_\aa)}=0.\label{f3}
\end{align}
Since $L-L^\aa\in Com(V(\aa),W)$,  we have
\begin{equation}\label{f4}
(L-L^\aa,V(\aa)\cap W(2))=0.
\end{equation}
From \eqref{f1}-\eqref{f4} it follows that $(L-L^\aa,W(2))=0$. Since $(W(i),W(j))=0$ if $i\ne j$ we see that $L-L^\aa$ is in the kernel of the form $(\cdot,\cdot)$, which is a proper ideal.

Converesely, if $L-L^{\aa}$ generates a proper ideal $I$ then $L-L^{\aa}=0$ in $W/I$, hence $(L-L^{\aa})(2)(L-L^{\aa})=\half(c-c_\aa)\vac=0$. This completes the proof.
\end{proof}

In the setting of Theorem \ref{Criterion}, we can clearly assume that  $S$ is a space  stable under the action of $\aa$. We make the further assumption that the action of $\aa$ on $W$ is completely reducible, thus we can choose $S$ so that $(\C L+\aa)\oplus S$ is a set of strong generators for $W$. Decomposing $S$ as $\aa$-module, we can write $S=\oplus_{i\in \mathcal J}S_i$ with $S_i$ irreducible and $\mathcal J$ some index set. Write 
$$
S_i=\bigotimes_j S_i^j
$$
with $S_i^j$ irreducible $\aa_j$--modules. Set
\begin{equation}\label{Cj}
C_i=\sum_{j\ge1}\frac{c^{(i)}_j}{2(k_j+h^\vee_j)}+(1-\d_{k_0,0})\frac{c^{(i)}_0}{2k_0},
\end{equation}
where $c^{(i)}_j$ is  the eigenvalue of $C_{\aa_j}$ on $S_i^j$. 
Let $M$ be a  weight $V(\aa)$--module such that $S_i\subset M^{top}$. Then 
\begin{equation}\label{Sisingular}
L^\aa(0)v=C_iv,\, v\in S_i\subset M^{top}.
\end{equation}
Recall that we are assuming that $S$ is homogeneous under the action of $L(0)$; since the actions of 
$\aa$ and $L(0)$ commute, we can clearly assume that also $S_i, i\in\mathcal J$ are homogeneous: we set $\D_i$ to  be the eigenvalue of $L(0)$ on $S_i$.
Note that we are not assuming that the action of $L^\aa_0$ on $W$ is semisimple.

In the setting  of Theorem \ref{Criterion}, assume that $c=c_\aa$ and let $I$ be the proper ideal generated by $L-L^\aa$. Let $\pi_I: W\to W/I=\overline W$ be the quotient map.
Since $L=L^\aa$ on $\overline W$, the action of $L^\aa$ is semisimple on $\overline W$. 
Choose as above a set of strong generators $(\C(L-L^\aa)+\aa)\oplus \sum_{i\in\mathcal J} S_i$.
Then $\overline W$ is strongly generated by $$\aa \oplus \sum_{i\in\mathcal K} \pi_I(S_i)$$
and we can assume $\mathcal K\subset \mathcal J$ minimal.
%to be minimal in the sense that, if $\mathcal K$ is  a proper subset of $\mathcal J$, then $\aa\oplus (\sum_{i\in \mathcal K}\pi_I(S_i))$ does not strongly generate $\overline W$. 
 
\begin{theorem}\label{Criterion2} With the above assumptions, $C_j= \D_j$ for all $j\in\mathcal K$. In particular, if $C_i\ne \D_i$ for all $i\in \mathcal J$, then 
$\overline W\cong\widetilde V(\aa)$.
\end{theorem}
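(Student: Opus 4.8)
\textbf{Proof proposal for Theorem \ref{Criterion2}.}
The plan is to exploit the fact that on $\overline W$ the two conformal vectors coincide, so that each generator $\pi_I(S_i)$ becomes both an $\aa$-highest-weight vector (in the appropriate sense) and an $L$-eigenvector, and then to compare the two natural conformal weights attached to it. First I would observe that, because $L=L^\aa$ in $\overline W$, the action of $L^\aa(0)$ on $\overline W$ is semisimple and agrees with $L(0)$; in particular, for $v\in\pi_I(S_i)$ we have $L^\aa(0)v=\D_i v$. On the other hand, since $S_i$ sits inside the top component of the $V(\aa)$-module it generates, the Casimir computation \eqref{Cj}--\eqref{Sisingular} gives that the Sugawara operator $L^\aa(0)$ acts on a highest weight vector of that module by the scalar $C_i$. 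The subtlety is that $\pi_I(S_i)$ need not consist of highest weight vectors for $\aa$ inside $\overline W$ \emph{a priori}; this is exactly where the minimality of $\mathcal K$ enters.

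The key step is therefore the following: I claim that for $j\in\mathcal K$ the space $\pi_I(S_j)$ must lie in the top component of the $V(\aa)$-submodule of $\overline W$ it generates, equivalently that no nonzero element of $\pi_I(S_j)$ can be written using strong generators from $\aa$ together with the \emph{other} $\pi_I(S_i)$, $i\in\mathcal K\setminus\{j\}$, and normally ordered products lowering $L(0)$-degree. Suppose not: then some nonzero $v\in\pi_I(S_j)$ lies in the $\aa$-submodule generated by elements of strictly smaller conformal weight built from $\aa$ and $\{\pi_I(S_i)\mid i\ne j\}$. Since $S_j$ is irreducible as an $\aa$-module and $\aa$ acts preserving conformal weight, the whole of $\pi_I(S_j)$ then lies in the vertex subalgebra strongly generated by $\aa\oplus\sum_{i\in\mathcal K\setminus\{j\}}\pi_I(S_i)$, contradicting the minimality of $\mathcal K$. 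Hence $\pi_I(S_j)$ does generate a genuine weight $V(\aa)$-module with $S_j$ in its top, so \eqref{Sisingular} applies with $M$ that module, giving $L^\aa(0)v=C_j v$ for $v\in\pi_I(S_j)$. Comparing with $L^\aa(0)v=L(0)v=\D_j v$ yields $C_j=\D_j$.

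The final sentence then follows immediately: if $C_i\neq\D_i$ for every $i\in\mathcal J$, the conclusion just proved forces $\mathcal K=\emptyset$, so $\aa$ alone strongly generates $\overline W$, i.e. $\overline W=\pi_I(V(\aa))=V(\aa)/(I\cap V(\aa))=\widetilde V(\aa)$, and by Theorem \ref{Criterion} this is a nonzero conformal embedding.

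\textbf{Anticipated main obstacle.} The delicate point is making rigorous the dichotomy ``$\pi_I(S_j)$ is either in the top of the module it generates, or redundant as a strong generator.'' One must argue carefully that an element of $\pi_I(S_j)$ which is a normally ordered combination of lower-weight generators is \emph{simultaneously} not a highest weight vector and can be removed from the generating set; this uses that normally ordered products of strong generators of weight $<\D_j$ with $\aa$ again have weight $\le\D_j$ and that the span of such products is $\aa$-stable, together with irreducibility of $S_j$. Handling the possible non-semisimplicity of $L^{\aa_0}$ on $W$ (flagged in the text) requires that one only ever evaluates $L^\aa(0)$ after passing to $\overline W$, where semisimplicity is restored; keeping this bookkeeping straight is the part that needs the most care.
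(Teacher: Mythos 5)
Your overall strategy is the right one, but the pivotal claim on which it rests has a genuine gap. You assert that for $j\in\mathcal K$ the space $\pi_I(S_j)$ ``lies in the top component of the $V(\aa)$-submodule of $\overline W$ it generates'', and you treat this as equivalent to $\pi_I(S_j)$ not being expressible through the other strong generators. These two conditions are not equivalent. Lying in the top component means $a_{n}\pi_I(S_j)=0$ for all $a\in\aa$ and $n>0$, which is exactly what you need in order to invoke \eqref{Sisingular} and get $L^\aa(0)v=C_jv$ on the nose. But a positive mode $a_{n}$ merely sends $\pi_I(S_j)$ into $\overline W(\D_j-n)$, which has no reason to vanish: a strong generator can fail to be annihilated by the positive modes of $\aa$ (for instance, $a_{1}$ applied to a weight-two generator may land in $\aa$ itself) while still being indispensable in any strong generating set. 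Nothing in the setup of Theorem \ref{Criterion2} assumes the $S_i$ are primary or highest weight for $V(\aa)$. Consequently the negation of your claim does not produce an element of $\pi_I(S_j)$ lying in the subalgebra generated by lower-weight elements, your appeal to the minimality of $\mathcal K$ does not yield a contradiction, and without the claim $L^\aa(0)v$ differs from $C_jv$ by the contributions in which a positive mode of $\aa$ acts first on $v$; the comparison with $\D_j$ then breaks down.

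The paper repairs exactly this point by working modulo the vertex subalgebra $\overline W'$ generated by $\sum_{n<\D_j}\overline W(n)$. Since $\overline W'$ contains $\aa$ and is therefore stable under all modes of $\aa$, the operator $L^\aa(0)$ descends to $\overline W/\overline W'$; and for pure conformal-weight reasons $a_{n}\pi_I(S_j)\subset\overline W'$ for all $n>0$, so in this quotient $\pi_I(S_j)$ behaves exactly like a top component and $L^\aa(0)$ acts on it by the scalar $C_j$, while $L(0)=L^\aa(0)$ acts by $\D_j$. The minimality of $\mathcal K$ is used only for the purpose you correctly identified, namely to guarantee $\pi_I(S_j)\cap\overline W'=\{0\}$ (using that $\overline W'$ is $\aa$-stable and $S_j$ is irreducible), so that the two eigenvalue computations are compared on a nonzero space. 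If you replace your ``top component in $\overline W$'' claim by ``top component modulo $\overline W'$'', the rest of your argument, including the deduction of $\overline W\cong\widetilde V(\aa)$ when $C_i\ne\D_i$ for all $i\in\mathcal J$, goes through.
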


\begin{proof}
%Check details
Fix $\D_i, i\in\mathcal K$. Let $\overline W'$ be the vertex subalgebra generated by 
$$
\overline W(\D_i)^-=\sum_{n<\D_i}\overline W(n).
$$
%{\color{red}  Is  $\overline W'$ a $V(\aa)$--submodule or maybe it  should be a  vertex subalgebra generated by $\overline W(\D_i)^-$?}
Note that, if $a\in \aa$ and $n\ge0$, then $a_{n}\overline W(\D_i)^-\subset  \overline W(\D_i)^-$.
It follows that $\overline W'$ is strongly generated by $\overline W(\D_i)^-$.
By  the minimality of $\mathcal K$, this observation implies that 
  $\pi_I(S_i)\cap \overline W'=\{0\}$. By construction, $a_{n}(\pi_I(S_i)+\overline W')=\overline W'$ %{\color{red} Should be $0$?} 
  in $\overline W/\overline W'$ if $n>0$ and $a\in\aa$, hence,  by \eqref{Sisingular},
 $$
 L^\aa(0)(\pi_I(S_i)+\overline W')=C_i(\pi_I(S_i)+\overline W').
 $$
 On the other hand, since $L=L^\aa$ in $\overline W$, we find
 $$
 L^\aa(0)(\pi_I(S_i)+\overline W')=L(0)(\pi_I(S_i))+\overline W'=\D_i(\pi_I(S_i)+\overline W').
 $$
 Since $\pi_I(S_i)+\overline W'\ne0$ the claim  follows.

\end{proof}

%\begin{remark}
 
% (1).  Observe that $\C(L-L^{\aa})+V(\aa)\cap W(2)$ is a $\aa-stable$ subspace of $W(2)$. If the trivial representation of $\aa$ does not occur in $W(2)/(\C(L-L^{\aa})+V(\aa)\cap W(2))$ then one can choose $S_0\cup S_1$ so that condition (b) of Theorem \ref{Criterion} is fulfilled.

%(2).  Condition (b) of Theorem \ref{Criterion} is obviously fulfilled if the generators in $(S_0\cup S_1)\cap W(2)$ are primary.

%\end{remark}
%\section{ A criterion for the semi-simplicity of decompositions}
% Should be written
\section{Structure}
We adopt the setting and notation of Section\ 1 of \cite{KW}. We let  $W^k(\g,x,f)$ be the universal $W$--algebra of level $k\in \R$ associated to the datum  $(\fg ,x,f)$, where  $\fg$ is  a basic Lie superalgebra, $x$ is an
$\ad$--diagonalizable element of $\fg$ with eigenvalues in
$\tfrac{1}{2}\ZZ$, $f$ is an even element of $\fg$ such that
$[x,f]=-f$ and the eigenvalues of $\ad x$ on the centralizer
$\fg^f$ of $f$ in $\fg$ are non-positive. We also assume that the datum $(\g,x,f)$ is {\it Dynkin}, i.e. there is a $sl(2)$-triple $\{e,h,f\}$ and $x=\half h$.
Let 
\begin{equation}\label{gg}\g=\bigoplus\limits_{j\in \frac{1}{2}\Z}\g_j
\end{equation}
 be the grading of $\g$ by $\ad(x)$--eigenspaces.
%This is the case, for example, of the minimal $W$--algebras $W^k(\g,\theta/2)$ coming from Table 2 in Section 5 of \cite{KW1}. 
We 
%also 
assume that $k\ne -h^\vee$ so that $W^k(\g,x,f)$ has a Virasoro vector.
Then
%are necessary and sufficient assumptions for 
$W^k(\g,x,f)$ is 
%to be 
a conformal vertex algebra in the sense of Definition \ref{svoa}. It is known that the vertex algebra structure of $W^k(\g,x,f)$ depends only on the $G_{\bar 0}$-orbit of $f$ (where $G_{\bar 0}$ is the adjoint group corresponding to $\g_{\bar 0}$), but the conformal structure does depend also on the grading \eqref{gg}.

We let:
\begin{equation}
  \label{eq:1.2}
  \fg_+ =\bigoplus_{j>0} \fg_j \, , \quad \fg_- = \bigoplus_{j<0} \fg_j
  \, , \quad \fg_{\leq} = \fg_0 \oplus \fg_- \, .
\end{equation}
The element $f$ defines a skew-supersymmetric even bilinear form
$\langle\, . \, , \, . \rangle_{ne}$ on $\fg_{1/2}$ by the
formula:
\begin{equation}
  \label{eq:1.3}
  \langle a , b \rangle_{ne} = (f | [a,b]) \, .
\end{equation}
Denote by $\fg^{\natural}$ the centralizer
of $f$ in $\fg_0$. 
%The bilinear form $\langle . \, , \,
%. \rangle_{ne}$ is invariant with respect to the representation of
%$\fg^{\natural}$ on
%%$\fg_{1/2}$:
%
%\begin{equation}
%  \label{eq:1.5}
%  \langle [v,a],b \rangle_{\ne} + (-1)^{p(v)p(a)}
%  \langle a,[v,b] \rangle_{\ne} =0 \quad
 % (v \in \fg^{\natural}, a,b \in \fg_{1/2}) \, .
%\end{equation}

Denote by $A_{ne}$ the vector superspace $\fg_{1/2}$ endowed
with the bilinear form (\ref{eq:1.3}).  Denote by $A$
(resp. $A^*$) the vector superspace $\fg_+$ (resp. $\fg^*_+$)
with the reversed parity, let $A_{{ch}} =A \oplus A^*$ and define
an even skew-supersymmetric non-degenerate bilinear form $\langle . \,
, \, . \rangle_{{ch}}$ on $A_{{ch}}$ by
\begin{eqnarray}
  \label{eq:1.6}
  \langle A,A \rangle_{{ch}} &=& 0 = \langle A^* ,A^* \rangle_{{ch}}
   \, , \, \\
\nonumber
\langle a,b^* \rangle_{{ch}} &=& -(-1)^{p(a)p(b^*)}
   \langle b^* ,a \rangle_{{ch}} =b^* (a) \hbox{ for }
   a \in A , b^* \in A^* \, .
\end{eqnarray}
%
%Here and further, $p(a)$ stands for the parity of an
%(homogeneous) element of a vector superspace.

%Following \cite{KRW}, introduce the differential complex $(\C (\fg
%,x,f,k),d_0)$, where $\C (\fg ,x,f,k)$  is a vertex algebra depending on a
%complex parameter $k$ and $d_0$ is an odd derivation of all
%products of this vertex algebra, such that $d^2_0 =0$.  We have:
%
%\begin{equation}
%  \label{eq:1.7}
%  \C (\fg , x,f ,k)=V_k (\fg) \otimes F (\fg , x,f) \, ,
%\end{equation}
%
%&where $V_k (\fg)$ is the \emph{universal affine vertex algebra of level}
%$k$ associated to $\fg$ and

%\begin{equation}
%\  \label{eq:1.8}
 %\ F(\fg , x,f) =F (A_{{ch}}) \otimes F (A_{\ne}) \, ,
%\\end{equation}

Let  $F(A_{{ch}})$ and $F(A_{ne})$ be  the fermionic vertex algebras based on $A_{{ch}}, A_{ne}$ respectively. Recall from \cite{KW} that $W^k(\g,x,f)$  is a vertex subalgebra of
$V^k(\g)\otimes F(A_{{ch}})\otimes F(A_{ne})$.

Choose a basis $\{
u_{\alpha} \}_{\alpha \in \Sigma_j}$ of each $\fg_j$ in
\eqref{gg}, and let $\Sigma=\coprod_{j \in \tfrac{1}{2} \ZZ} \Sigma_j$,
$\Sigma_+ = \coprod_{j>0} \Sigma_j$.  Let $m_{\alpha} =j$
if $\alpha \in \Sigma_j$.  
%Define the structure constants
%$c^{\gamma}_{\alpha \beta}$ by $[u_{\alpha}, u_{\beta}] =
%\sum_{\gamma} c^{\gamma}_{\alpha \beta} u_{\gamma}$ $(\alpha,
%\beta, \gamma \in S)$.  
Denote by $\{ \varphi_{\alpha}\}_{\alpha
  \in \Sigma_+}$ the corresponding basis of $A$ and by $\{
\varphi^{\alpha}\}_{\alpha \in \Sigma_+}$ the basis of $A^*$ such that
$\langle \varphi_{\alpha}, \varphi^{\beta} \rangle_{ch}
=\delta_{\alpha \beta}$.  Denote by $\{ \Phi_{\alpha}\}_{\alpha
  \in \Sigma_+}$ the corresponding basis of $A_{ne}$, and by $\{
\Phi^{\alpha} \}_{\alpha \in \Sigma_{1/2}}$ the dual basis
with respect to $\langle . \, , \, . \rangle_{ne}$, i.e.,~$\langle
\Phi_{\alpha} , \Phi^{\beta} \rangle_{ne} = \delta_{\alpha \beta}$.
  It will also be convenient to define $\Phi_u$ for any $u \in
  \sum_{\alpha \in \Sigma} c_{\alpha}u_{\alpha} \in \fg$ by letting
  $\Phi_u =\sum_{\alpha \in \Sigma_{1/2}} c_{\alpha}\Phi_{\alpha}$; similarly, for $u\in \g_+$, we will use the notation $\varphi_u$.

\begin{rem}
Since we have chosen the grading to be Dynkin, we can apply \cite[Lemma 7.3]{KMP}. Thus a conjugate linear involution $\phi$ of $\g$ fixing $x$ and $f$ and satisfying $(\phi(a)|\phi(b))=\overline{(a|b)}$ descends to define a conjugate linear involution of $W^k(\g,x,f)$ that we still denote by $\phi$.
\end{rem}
Set
$$
\widehat{\g^f}=\bigoplus_{j\le0}(\g^f_j\otimes \C[t^{-1}]t^{-1+j}).
$$
The space $\widehat{\g^f}$ admits a grading by the action of $D=-t\frac{d}{dt}$. Moreover there is a natural action of $\g^\natural$ on $\widehat{\g^f}$. Let $S(\widehat{\g^f})$ be the symmetric algebra of $\widehat{\g^f}$, equipped with the natural action of $\g^\natural$ and the action of $D$ extended by derivations. We let $D$ act on $W^k(\g,x,f)$ by the action of $L(0)$. Recall that $W^k(\g,x,f)(1)$ is isomorphic to $\g^\natural$ as a Lie superalgebra, so we can define an action of $\g^\natural$ on $W^k(\g,x,f)$ by letting $a\in \g^\natural \cong W^k(\g,x,f)(1)$ act by $a_0$.

We say that a finite--dimensional sub--superspace $\mathfrak V$ of $W$ is free in $W$ if there is a basis $\mathcal{B}=\{w_1,\ldots, w_s\}$ of $\mathfrak V$ such that the set
$$
\mathcal M(\mathcal B)=\left\{:T^{j_1}(w_{i_1})\cdots T^{j_r}(w_{i_r}):\mid r\in\ZZ_+,\ i_1\le i_2\cdots\le i_r, i_j<i_{j+1}\text{ if $p(w_{i_j})=1$}\right\} 
$$
is linearly independent. Obviously, if $\mathcal M(\mathcal B)$ is linearly independent for a basis $\mathcal B$ of $\mathfrak V$, then $\mathcal M(\mathcal B')$ is linearly independent for any basis $\mathcal B'$ of $\mathfrak V$.
 We say that $\mathfrak V$ strongly and freely generates $W$ if it is free and strongly generates $W$.

\begin{theorem} \label{structure} Assume that $W^k(\g,x,f)$ is completely reducible as a $\g^\natural$--module. Then there is a $\C D\oplus\g^\natural$--module isomorphism $\Psi:S(\widehat{\g^f})\to W^k(\g,x,f)$ such that  $\Psi(\g^f)$ strongly and freely  generates $W^k(\g,x,f)$.
Moreover one can choose $\Psi$ so that $\Psi(f)=L$.
\end{theorem}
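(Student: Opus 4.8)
The plan is to establish the isomorphism $\Psi$ by combining the structure theory of $W$-algebras from \cite{KW} with a graded-dimension (character) comparison. First I would recall that $W^k(\g,x,f)$ comes equipped with a good filtration whose associated graded is, by the results of Kac--Wakimoto, isomorphic as a vertex Poisson algebra to the classical $W$-algebra, whose underlying space is $S(\widehat{\g^f})$ — concretely, $W^k(\g,x,f)$ has a PBW-type basis indexed by the monomials $\mathcal M(\mathcal B)$ for $\mathcal B$ a basis of a copy of $\g^f$ sitting inside $W^k(\g,x,f)$ in degrees $1-j$ for $j\le 0$. This already gives that $\g^f$ (suitably embedded) strongly and freely generates $W^k(\g,x,f)$, so the content of the theorem is the \emph{equivariance}: we must produce an embedding of $\g^f$ into $W^k(\g,x,f)$ that is simultaneously $\C D$-stable (i.e. $L(0)$-homogeneous with the $t$-degree grading matching conformal weight) and $\g^\natural$-stable, and then upgrade it to a map on all of $S(\widehat{\g^f})$.

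The key steps, in order, would be: (i) Using complete reducibility of $W^k(\g,x,f)$ as a $\g^\natural$-module (the hypothesis), decompose each graded piece $W^k(\g,x,f)(n)$ into isotypic components for $\g^\natural$; the $L(0)$-action is $\g^\natural$-linear since $\g^\natural$ acts by $a_0$ and $[L(0),a_0]=0$ for $a\in W^k(\g,x,f)(1)$ (as $\aa$ is $L$-primary). (ii) Identify, inside the space of strong generators of minimal conformal weight for each "new" generator, a $\g^\natural$-submodule isomorphic to $\g^f_j\otimes \C t^{-1+j}$ — here one uses that $\g^\natural$ acts on $\widehat{\g^f}$ with the matching decomposition, and that the generators of the classical $W$-algebra transform in exactly the $\g^\natural$-representation $\g^f$; by complete reducibility one can lift these submodules through the filtration to genuine $\g^\natural$-submodules of $W^k(\g,x,f)$, obtaining a $\C D\oplus\g^\natural$-equivariant linear map $\widehat{\g^f}\to W^k(\g,x,f)$ sending $\g^f$ to strong free generators. (iii) Extend this map multiplicatively — sending a monomial $T^{j_1}(w_{i_1})\cdots$ in $S(\widehat{\g^f})$ to the corresponding normally ordered product $:T^{j_1}(\Psi w_{i_1})\cdots:$ in $W^k(\g,x,f)$ — and check it is a well-defined $\C D\oplus\g^\natural$-module isomorphism: injectivity and surjectivity follow from freeness and strong generation, $D$-equivariance from $[D,T]=T$ and the grading on $\widehat{\g^f}$, and $\g^\natural$-equivariance from the Leibniz rule for the $a_0$-action on normally ordered products together with the equivariance of $\Psi$ on generators.

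Finally, for the normalization $\Psi(f)=L$: the element $f\in\g^f$ lies in the piece $\g^f_{-1}\otimes\C t^{-2}$, which sits in degree $2$, i.e. conformal weight $2$; its $\g^\natural$-isotype is the trivial one (as $f$ is central for the $sl_2$-triple hence fixed by $\g^\natural$). Both $L$ and $L^\aa$ (the Sugawara vector of $\aa=\g^\natural$) live in $W^k(\g,x,f)(2)$ and are $\g^\natural$-invariant, so $L-L^\aa$ spans a trivial summand there; one checks that $L$ itself is among the strong generators — it is not a normally ordered polynomial in $\g^\natural$ and its $T$-derivatives by a weight/generation count — so we may simply choose the generator representing the relevant trivial $\g^\natural$-summand in degree $2$ to be $L$ (rather than $L^\aa$ or some other combination). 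I expect the main obstacle to be step (ii): making precise that the $\g^\natural$-module structure on the space of new strong generators really does match $\bigoplus_{j\le 0}\g^f_j\otimes\C t^{-1+j}$ and that complete reducibility lets one choose generators spanning honest $\g^\natural$-submodules compatibly across all conformal weights. This requires carefully tracking the $\g^\natural$-action through the Kac--Wakimoto construction of $W^k(\g,x,f)$ as a subalgebra of $V^k(\g)\otimes F(A_{ch})\otimes F(A_{ne})$ and through the good filtration, rather than any hard new computation.
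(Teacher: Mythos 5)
Your proposal follows essentially the same route as the paper: start from the Kac--Wakimoto free strong generators, use complete reducibility of the $\g^\natural$-action (together with the $\g^\natural$-module identification of $W^k(\g,x,f)$ with $S(\widehat{\g^f})$ from \cite[Proposition 3.1]{CL}) to correct the generators conformal weight by conformal weight into $\g^\natural$-stable ones, and then normalize the trivial weight-two summand to be $L$. The obstacle you flag in step (ii) is precisely what the paper resolves, by induction on the weight combined with a De Sole--Kac filtration/associated-graded argument showing that generators modified by lower-weight normally ordered corrections still strongly and freely generate.
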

\begin{proof}
The existence of a  $\g^\natural$--module isomorphism $\Psi:S(\widehat{\g^f})\to W^k(\g,x,f)$ is highlighted in \cite[Proposition 3.1]{CL} as a corollary of the proof of Theorem 4.1 in \cite{KW}.  More precisely, Theorem 4.1 in \cite{KW} provides a vector space isomorphism between $S(\widehat{\g^f})$ and  $W^k(\g,x,f)$, mapping a polynomial over $\widehat{\g^f}$ into the normally ordered product of the corresponding generators of $W^k(\g,x,f)$. 
In \cite[Proposition 3.1]{CL} it is observed that it is possible to modify $\Psi$ into a $\g^\natural$-module map. 
We observe  that the fact that one can choose $\Psi$ so that $\Psi(\g^f)$ strongly and freely  generates $W^k(\g,x,f)$ is also implicit in the construction given in Theorem 4.1 in \cite{KW}. Here we give a proof of this latter statement.

Recall from \cite{KW} that there is a $\C D$--module  monomorphism $\Psi':\g^f\to W^k(\g,x,f)$ such that $W^k(\g,x,f)$ is strongly and freely generated by $\Psi'(\g^f)$. We prove by induction on $j$ that there is a $\C D$--module monomorphism  $\Psi_j: \g^f\to W^k(\g,x,f)$ such that $\Psi_j$ restricted to $\bigoplus\limits_{i=0}^{j-1}\g^f_{-i}\otimes t^{-i-1}$ is a $\C D\oplus\g^\natural$--module homomorphism and such that  $W^k(\g,x,f)$ is strongly and freely generated by $\Psi_j(\g^f)$.

For $j=1$ it is enough to choose $\Psi_1=\Psi'$. We now assume by induction the existence of $\Psi_j$ and construct $\Psi_{j+\frac{1}{2}}$.

For each $r$ choose a basis $\{a^r_i\}$ of $\g^f_r$ and choose an order for
$$
\widehat{\mathcal B}=\bigcup_{s,i,r}\{a^r_i\otimes t^{-s+r-1}\}.
$$
so that we can write  $\widehat{\mathcal B}=\{b_1,b_2,\ldots,b_i,\ldots\}$. Clearly $\widehat{\mathcal B}$ is an ordered basis for $\widehat{\g^f}$.
Extend $\Psi_j$ to $\widehat{\g^f}$ by setting
$\Psi_j(a^r_i\otimes t^{-s+r-1})=T^s(\Psi_j(a^r_i))$ and also to $S(\widehat{\g^f})$ by mapping the ordered monomials $b_{i_1}\cdots b_{i_r}$ to
$
:\Psi_j(b_{i_1})\cdots \Psi_j(b_{i_r}):$.
 By the induction hypothesis this extension of $\Psi_j$ is an isomorphism of $\C D$--modules.
Write 
$$
S(\widehat{\g^f})=\bigoplus_{n\in \mathbb Z_+} S(\widehat{\g^f})(n)
$$
for the grading defined by the action of $D$. Set 
$$
U=\Psi_j\left(S\left(\bigoplus_{i=0}^{j-1}(\g^f_{-i}\otimes \C[t^{-1}]t^{-1-i})\right)(j+\tfrac{1}{2})\right).
$$
Since $\Psi_j$ is  $\g^\natural$--equivariant when restricted to $S\left(\bigoplus_{i=0}^{j-1}(\g^f_{-i}\otimes \C[t^{-1}]t^{-1-i})\right)$, it follows that $U$ is $\g^\natural$--stable. Let $V$ be a $\g^\natural$--stable complement of $U$ in $W^k(\g,x,f)(j+\tfrac{1}{2})$.

Since, by \cite[Proposition 3.1]{CL},  $S(\widehat{\g^f})(j+\tfrac{1}{2})$ and $W^k(\g,x,f)(j+\tfrac{1}{2})$ are isomorphic as $\g^\natural$--modules, we have
$$
V\cong W^k(\g,x,f)(j+\tfrac{1}{2})/U \cong S(\widehat{\g^f})(j+\tfrac{1}{2})\bigg/S\left(\bigoplus_{i=0}^{j-1}(\g^f_{-i}\otimes \C[t^{-1}]t^{-1-i})\right)(j+\tfrac{1}{2})\cong \g^f_{-j+\frac{1}{2}}.
$$
Let $\phi:\g^f_{-j+\frac{1}{2}}\to V$ be a $\g^\natural$--module isomorphism. 
We define $(\Psi_{j+\frac{1}{2}})_{|\g^f_{-j+\frac{1}{2}}}=\phi$ and 
$(\Psi_{j+\frac{1}{2}})_{|\g^f_{i}}=\Psi_j$ for $i\ne -j+\frac{1}{2}$.

It remains to check that $\Psi_{j+\frac{1}{2}}(\g^f)$ strongly and freely generates $W^k(\g,x,f)$.

Since $\Psi_j(\g^f)$ is free, the projection $p_V:\Psi_j(\g^f_{-j+\frac{1}{2}})\to V$ with respect to the decomposition $W^k(\g,x,f)(j+\frac{1}{2})=V\oplus U$ is injective, hence, since $\Psi_j(\g^f_{-j+\frac{1}{2}})$ and $V$ have the same dimension, also bijective. 
In particular the set $\{\widetilde a^{-j+\frac{1}{2}}_i\}$ such that
$$
\{\widetilde a^{-j+\frac{1}{2}}_i\otimes t^{-j-\frac{1}{2}}\}=\phi^{-1}(\{p_V(\Psi_j(a^{j-\frac{1}{2}}_i\otimes t^{-j-\frac{1}{2}}))\})
$$
 is a basis  of $\g^f_{-j+\frac{1}{2}}$. Let $\widetilde{\mathcal B}=\{\widetilde b_1,\widetilde b_2,\ldots, \}$ be the basis of $S(\widehat{\g^f})$ constructed as $\widehat{\mathcal B}$ using the basis $\{\widetilde a^{-j+\frac{1}{2}}_i\}\cup (\bigcup_{r\ne -j+\frac{1}{2}}\{a^r_i\})$ of $\g^f$.
 
 We need to show that the monomials 
 $$:\Psi_{j+\frac{1}{2}}(\widetilde b_{i_1})\cdots \Psi_{j+\frac{1}{2}}(\widetilde b_{i_r}):$$
 form a basis of $W^k(\g,x,f)$. To this end, following \cite{DK}, we grade $\widehat{\g^f}$ by setting $\deg(a^{r}_i\otimes t^s)=r+1$ and extend the grading to $S(\widehat{\g^f})$. Set  
 $$
 S(\widehat{\g^f})_p=\{a\in  S(\widehat{\g^f})\mid \deg(a)\le p\}.
 $$
Set also  $W^k(\g,x,f)_p=\Psi_j( S(\widehat{\g^f})_p)$, thus obtaining  a filtration of $W^k(\g,x,f)$, which, according to \cite{DK} and Example 3.12 of \cite{DKF}, has the property that
$$
:W^k(\g,x,f)_pW^k(\g,x,f)_q:\subset W^k(\g,x,f)_{p+q},
$$
and
$$[(W^k(\g,x,f)_p)_\lambda (W^k(\g,x,f)_q)]\subset W^k(\g,x,f)_{p+q-\frac{1}{2}}.
$$ 
It follows from (1.39) and (1.40) of \cite{DKF}, that the normal order $:\cdot :$ induces a (super)--commutative and (super)--associative product on $\mbox{gr}(W^k(\g,x,f))$.
We define a grading of the resulting algebra $\mbox{gr}(W^k(\g,x,f))$ by giving degree $1$ to $\Psi_j(a^{-j+\frac{1}{2}}_i\otimes t^s)$ and degree $0$ to $\Psi_j(\oplus_{i\ne- j+\frac{1}{2}}(\g^f_i\otimes \C[t^{-1}]t^{-1+i}))$.
Write $\mbox{gr}(W^k(\g,x,f))^n$ for the space of degree $n$ in
$\mbox{gr}(W^k(\g,x,f))$.
By construction we have that 
$$
\Psi_{j+\frac{1}{2}} (\widetilde b_{i})=\Psi_j(b_{i})+\sum c^i_{i_1,\ldots, i_s }:\Psi_j(b_{i_1})\cdots \Psi_j(b_{i_s}):
$$
with $\deg(b_{i_t})<j+\tfrac{1}{2}$, so, if
$\deg(:\Psi_{j+\frac{1}{2}}(\widetilde b_{i_1})\cdots \Psi_{j+\frac{1}{2}}(\widetilde b_{i_r}):)=m$ and
$:\Psi_{j+\frac{1}{2}}(\widetilde b_{i_1})\cdots \Psi_{j+\frac{1}{2}}(\widetilde b_{i_r}):+(W^k(\g,x,f))_{m-\frac{1}{2}}\in \mbox{gr}(W^k(\g,x,f))^n
$
then
\begin{align*}
:\Psi_{j+\frac{1}{2}}&(\widetilde b_{i_1})\cdots \Psi_{j+\frac{1}{2}}(\widetilde b_{i_r}):+(W^k(\g,x,f))_{m-\frac{1}{2}}\\
&=:\Psi_{j}( b_{i_1})\cdots \Psi_{j}( b_{i_r}):+(W^k(\g,x,f))_{m-\frac{1}{2}}\mod \mbox{gr}(W^k(\g,x,f))^{n-1}.
\end{align*}
This concludes the induction step.

It remains only to check that we can set up $\Psi$ so that $\Psi(f)=L$. This is achieved by choosing in the construction of $\Psi_2$ a $\g^\natural$--stable complement $V'$ of $\C L\oplus \Psi_{\frac{3}{2}}(S(\g^f_0\otimes \C[t^{-1}]t^{-1})(2))$, a $\g^\natural$--stable complement $V''$ of $\C f$ in $\g^f_1$, and defining $\phi$ by setting $\phi(f)=L$ and $\phi_{|V''}=\phi'$ with $\phi'$ any $\g^\natural$--isomorphism between $V''$ and $V'$.
\end{proof}
%{\color{blue}Up to minor details Theorem 3.1 should be complete}

\section{Hook type $W$-algebras}

We shall consider the special case $\g ={sl}(m+n)$, and the nilpotent element $f= f_{m,n}$ determined by the partition $(m,1^n)$.
In this case $\g^{\natural} \cong{gl}(n)$.  The  labels of the weighted Dynkin diagram corresponding to a dominant semisimple element in the orbit of $(x,f)$ are 
\begin{align}
&\text{$m$ even:}\quad  (\underbrace{1,\ldots,1}_{\tfrac{m}{2}-1},\tfrac{1}{2},\underbrace{0,\ldots,0}_{n-1},\tfrac{1}{2},\underbrace{1,\ldots,1}_{\tfrac{m}{2}-1}),\label{Dyneven}\\
&\text{$m$ odd:}\quad  (\underbrace{1,\ldots,1}_{\tfrac{m-1}{2}},\underbrace{0,\ldots,0}_{n},\underbrace{1,\ldots,1}_{\tfrac{m-1}{2}}).\label{Dynodd}
\end{align}
\begin{rem}\label{good} Note that when $m$ is even the grading is not even; on the other hand,  using Kac-Elashivili theory of good gradings \cite{EK}, one easily checks that the semisimple element
$$x_{good}=diag\left(\tfrac{(m+2n)(m-1)}{2(m+n)},\tfrac{(m+2n)(m-1)}{2(m+n)}-1,\ldots,-\tfrac{m(m-1)}{2(m+n)},\underbrace{-\tfrac{m(m-1)}{2(m+n)},\ldots,-\tfrac{m(m-1)}{2(m+n)}}_{n}\right)$$
gives rise to an even good grading for $f_{m,n}$.
\end{rem}
From \eqref{Dyneven}, \eqref{Dynodd}  it is easy to compute the dimension of $\g_j$, hence also those of $\g_j^f$, since 
\begin{equation}\label{calcolodim}\dim \g_j^f=\dim \g_j-\dim\g_{j-1}\text{ for $j\le 0$}.\end{equation} These data are summed up in  Tables 1,2.

\begin{table} 
\caption{$m$ odd }
\vskip 5pt
\begin{tabular}{c|c | c}
 & $\dim\mathfrak g_{-j}$ &$\dim\mathfrak g_{-j}^f$\\
\hline
$\frac{m+1}{2}\leq j\leq m-1$&$m-j$ &$1$\\\hline
$j=\frac{m-1}{2}$&$2n+\frac{m+1}{2}$&$2n+1$\\\hline
$1\leq j\leq\frac{m-3}{2}$&$2n+m-j$&$1$\\\hline
$j=0$&$n^2+2n+m-1$&$n^2$\\
\end{tabular}\vskip 5pt\vskip 5pt\vskip 5pt
\caption{$m$ even }
\vskip 5pt
\begin{tabular}{c|c | c}
 & $\dim\mathfrak g_{-j}$ &$\dim\mathfrak g_{-j}^f$\\
\hline
$1\leq j\leq m-1$&$m-j$ &$1$\\\hline
$j=\tfrac{m-1}{2}$&$2n$&$2n$\\\hline
$j=i+\frac{1}{2},\, 0\leq i <\frac{m}{2}-1$&$2n$&$0$\\\hline
$0$& $n^2+m-1$&$n^2$\\
\end{tabular}
\end{table}

\begin{theorem}\label{41} Assume that $k$ is not critical, i.e. $k \ne -n-m $. 
One can choose strong generators for the vertex algebra  $W^k(\g,x, f_{m,n})$ as follows: 
\begin{itemize}
\item[(1)]  $J^{\{a\}},\, a\in  \g^{\natural}\cong gl(n)$; these generators are primary  for $L$ of conformal weight $1$;
\item[(2)]the Virasoro field $L$;
\item[(3)]   fields $W_i$, $i= 3,  \dots, m$, of conformal weight $i$;%which are quasi-primary.
\item[(4)]fields $G^\pm _i, i=1, \dots, n $ of conformal weight $\tfrac{m+1}{2}$.
\end{itemize}
The fields $G^\pm _i, i=1, \dots, n $  are primary for both $L$ and $V (\g^{\natural})$.  As $sl(n)$--modules,
$$\mbox{span}_{\mathbb C} \{G^+ _i, i=1, \dots, n\}\cong \C^n, \quad
 \mbox{span}_{\mathbb C} \{G^- _i, i=1, \dots, n\}\cong (\C^n)^*.$$
 Finally, $gl(n)$ acts trivially on $W_i$.
\end{theorem}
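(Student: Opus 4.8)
The plan is to read off the generator structure directly from Theorem~\ref{structure}, which tells us that $W^k(\g,x,f_{m,n})$ is strongly and freely generated by $\Psi(\g^f)$ as a $\C D\oplus\g^\natural$--module, with $\Psi(f)=L$. So the task reduces to decomposing $\g^f=\bigoplus_{j\le 0}\g^f_j$ as a $\g^\natural\cong gl(n)$--module, tracking the $D$--eigenvalue (which becomes the conformal weight $1-j$ after the shift $\g^f_j\otimes t^{-1+j}$), and then checking primality for the half-integer weight generators.

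First I would use Tables~1 and~2 together with \eqref{calcolodim} to list, for each $j$, the dimension of $\g^f_{-j}$: one gets $\dim\g^f_0=n^2$, a single one-dimensional piece $\g^f_{-j}$ for each integer $1\le j\le m-1$, and a $2n$-dimensional (resp. $2n+1$-dimensional, in the $m$ odd case where there is also the $t$ contribution of the center) piece in degree $\tfrac{m-1}{2}$. Since $\g^f_0\cong\g^\natural\cong gl(n)$ as $\g^\natural$--modules (adjoint action), these generators are the currents $J^{\{a\}}$ of conformal weight $1$, primary for $L$ because $\g^\natural=W^k(\g,x,f_{m,n})(1)$ consists of primary elements (this is hypothesis \eqref{h3}, which holds for $W$--algebras with Dynkin datum — or one cites \cite{KW} directly). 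The one-dimensional $\g^f_{-j}$, $1\le j\le m-1$, must be $\g^\natural$--trivial (a one-dimensional $gl(n)$--module on which $sl(n)$ necessarily acts trivially, and the $gl(n)$-action is trivial too since these sit inside $\g_0^f$-centralizer data coming from the long leg of the hook); the one in degree $j=1$ is absorbed into $\C L$ via $\Psi(f)=L$, the remaining ones with $j=i$, $3\le i\le m$... wait, $j$ ranges over $2,\dots,m-1$ giving weights $i=1-(-j)=1+j$, i.e. $3,\dots,m$, so these are exactly the $W_i$, $i=3,\dots,m$ (with $W_2=L$). This gives parts (1), (2), (3) and the triviality of the $gl(n)$-action on the $W_i$.

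For part (4): the half-integer degree space $\g^f_{-(m-1)/2}$ has dimension $2n$ and sits in conformal weight $1+\tfrac{m-1}{2}=\tfrac{m+1}{2}$. I would identify this space explicitly from the matrix realization of $sl(m+n)$ with the hook nilpotent $f_{m,n}$: the $2n$-dimensional space decomposes under $gl(n)$ as $\C^n\oplus(\C^n)^*$, corresponding to the ``rows'' and ``columns'' connecting the length-$m$ Jordan block to the $n$ length-$1$ blocks; explicitly, the highest-weight component of $\g^f$ in this degree consists of matrix entries $E_{p,\,m+q}$ and $E_{m+q,\,p}$-type elements ($1\le q\le n$, $p$ the bottom of the long block), which are the standard and dual $gl(n)$-modules. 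I would then set $G^+_i$ and $G^-_i$ to be the images under $\Psi$ of bases of these two subspaces, so that as $sl(n)$-modules $\mbox{span}\{G^+_i\}\cong\C^n$ and $\mbox{span}\{G^-_i\}\cong(\C^n)^*$. Primality for $V(\g^\natural)$ follows because $\Psi$ can be chosen $\g^\natural$-equivariant, so $a_0 G^\pm_i$ stays in the top component $W^k(n)$... actually primality means $a_n G^\pm_i=0$ for $n>0$, which holds because $a\in W(1)$ raises conformal weight by $1-n\le 0$ and lands in a space that the free generation forces to be zero (there are no generators of weight $<\tfrac{m+1}{2}$ other than $L$, $W_i$ with $i\le\tfrac{m+1}{2}$... one must check weight $\tfrac{m-1}{2}$ is clear, which holds since $m-1\ge \tfrac{m+1}{2}$ fails only for small $m$ — handle $m=3$ separately if needed). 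Primality for $L$ of the $G^\pm_i$ is similar: $L(1)G^\pm_i$ has weight $\tfrac{m-1}{2}$ and by degree reasons in the free generating set it must be a multiple of $W_{(m-1)/2}$ times currents or vanish; a $\g^\natural$-equivariance argument (the $G^\pm_i$ span nontrivial $gl(n)$-modules while the weight-$\tfrac{m-1}{2}$ subspace of $\langle$currents, $W_i\rangle$ is a sum of modules of a different type) forces $L(1)G^\pm_i=0$ after adjusting $\Psi$ by a $gl(n)$-equivariant correction, exactly as in \cite{KW}.

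The main obstacle I expect is \emph{part (4)}, specifically establishing that the weight-$\tfrac{m+1}{2}$ generators can be chosen simultaneously primary for $L$ and for $V(\g^\natural)$ and that they decompose as $\C^n\oplus(\C^n)^*$ rather than some other $2n$-dimensional $gl(n)$-module. The $gl(n)$-decomposition is a finite linear-algebra computation inside $sl(m+n)$ (unavoidable but routine), but the primality statement requires care: one needs that the relevant lower-weight subspaces of the free generating algebra contain no $gl(n)$-submodule isomorphic to $\C^n$ or $(\C^n)^*$ that could ``pollute'' the would-be primary vectors, so that the standard correction procedure of \cite[Sec.~2]{KW} terminates and produces honest primary fields; when $m$ is small (e.g. $m=3$, where $\tfrac{m+1}{2}=2$ collides with the Virasoro weight) this needs a separate check, and one should note that for $m=3$ there is no $W_i$ family at all, consistent with the statement.
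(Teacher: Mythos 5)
Your overall route is the paper's: apply Theorem \ref{structure}, read off the $\g^\natural$--module structure of $\g^f$ from the matrix realization of the hook nilpotent, and then argue primality of the weight--$\tfrac{m+1}{2}$ generators. But there are two genuine gaps, and they are precisely where the paper's proof does its real work. First, Theorem \ref{structure} carries the hypothesis that $W^k(\g,x,f)$ is completely reducible as a $\g^\natural$--module, and you never verify it. Semisimplicity of the $sl(n)$--action is automatic from the finite dimensionality of the graded pieces, but $\g^\natural\cong gl(n)$ has a one--dimensional center $\C\varpi$, and one must show that $J^{\{\varpi\}}_0$ acts semisimply; the paper does this by the explicit formula for $J^{\{\varpi\}}$ from \cite{KW} and a $\lambda$--bracket computation on $V^k(\g)\otimes F(A_{ch})\otimes F(A_{ne})$. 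Without this, your repeated appeal to a $\g^\natural$--equivariant $\Psi$ (hence to the $\C^n\oplus(\C^n)^*$ decomposition and to any equivariance argument) is not yet licensed.

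Second, your primality argument is partly incorrect as stated and incomplete where it is correct. The claim that $a_nG^\pm_i$ for $n>0$ ``lands in a space that the free generation forces to be zero'' is false: the subspaces of conformal weight $\tfrac{m+1}{2}-n$ are nonzero in general (they contain normally ordered products and derivatives of the currents, $L$ and the $W_i$, e.g.\ already at weight $\tfrac{m-1}{2}$ for every odd $m\ge 3$). What makes the argument work is not vanishing of those subspaces but the fact that every element of weight $<\tfrac{m+1}{2}$ has zero $\varpi$--charge, while $J^{\{\varpi\}}_0G^\pm_i=\pm G^\pm_i$ and $J^{\{\varpi\}}_0$ commutes with $L(n)$ and with $a_n$ for $a\in\g^\natural$; this forces $L(n)G^\pm_i=a_nG^\pm_i=0$ for $n>0$, which is exactly the paper's argument. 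Your parenthetical ``modules of a different type'' for $L(1)$ points in this direction, but it is only asserted, it is phrased via $sl(n)$--types (which degenerates for $n=1$, a case the theorem includes, and needs an $n$--ality argument in general, whereas the charge argument is uniform), and the talk of ``adjusting $\Psi$ by an equivariant correction'' is a red herring: once the charge observation is in place no adjustment is needed, and for the same reason $m=3$ requires no separate treatment. So the proposal has the right skeleton but is missing the semisimplicity verification and the charge mechanism that the paper uses to close both points.
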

%{\color{red} Can we assume that $W_i$ are $M(1)$--primary, where $V(\g^{\natural})  = V(sl(n)) \otimes M(1)$?}
%{\color{blue} Yes, provided $k\ne-(m+n)(1-\tfrac{1}{m})$}
\begin{proof}We prove statements (1)--(4) by applying  Theorem \ref{structure}, hence we need to describe explicitly the structure of $\g^f$ as a $\g^\natural$--module. We choose
\begin{equation}\label{xf}
f=\left(\begin{array}{c|c}
  J_{m} & 0 \\
\hline
 0 & 0 
\end{array}\right),\quad x=\left(\begin{array}{cccc|c}
  \tfrac{m-1}{2}&0&\cdots&0 & \\
0&\tfrac{m-3}{2}&\cdots&0 &  \\
\vdots&\vdots&\ddots&\vdots&0\\
 0&0&\cdots &-\tfrac{m-1}{2} &  \\
 \hline
 &&0&&0
\end{array}\right).
\end{equation}
Here  $J_m$ the $m\times m$ is the Jordan block
$$
J_m=\left(\begin{array}{ccccc}
 0&0&\cdots&0&0 \\
1&0&\cdots&0&0  \\
0&1&\cdots&0&0  \\
\vdots&\vdots&\ddots&\vdots&\vdots\\
 0&0&\cdots &1 &0
 \end{array}\right).
$$
In our case 
\begin{equation}\g^\natural =\left\{\left(
 \begin{array}{c|c}
-\tfrac{ tr(A)}{m}I_m& 0 \\
\hline
 0 & A 
 \end{array}\right)
\mid A\in gl(n)\right\}\cong gl(n),
\end{equation}
and the  1-dimensional center is spanned by \begin{equation}\label{varpi}\varpi=\left(
 \begin{array}{c|c}
-\tfrac{n}{m}I_m& 0 \\
\hline
 0 & I_n
 \end{array}\right).\end{equation}

If $m$ is odd
$$
\g^f_j=\C \left(\begin{array}{c|c}
J^j_m&0\\
\hline
0&0
 \end{array}\right),\ 1\le j\le m-1, j\ne \frac{m-1}{2},
 $$
 and
 $$
\g^f_{(m-1)/2}=\C \left(\begin{array}{c|c}
J^{(m-1)/2}_m&0\\
\hline
0&0
 \end{array}\right)\oplus \left\{\left(\begin{array}{c|c|c}
0&0&0\\
\hline
0&0&v\\
\hline
{}^t w&0&0
 \end{array}\right)\mid v,w\in\C^n\right\}.
 $$
 
 If $m$ is even
 $$
\g^f_j=\C \left(\begin{array}{c|c}
J^j_m&0\\
\hline
0&0
 \end{array}\right),\ 1\le j\le m-1,
 $$
 and
 $$
\g^f_{(m-1)/2}= \left\{\left(\begin{array}{c|c|c}
0&0&0\\
\hline
0&0&v\\
\hline
{}^t w&0&0
 \end{array}\right)\mid v,w\in\C^n\right\}.
 $$
 Note that 
 \begin{equation}\label{actionvarpi}
\left[\varpi,\left(\begin{array}{c|c|c}
0&0&0\\
\hline
0&0&v\\
\hline
{}^t w&0&0
 \end{array}\right)\right]=\left(\begin{array}{c|c|c}
0&0&0\\
\hline
0&0&-\frac{n+m}{m}v\\
\hline
\frac{n+m}{m}\,{}^t w&0&0
 \end{array}\right).
 \end{equation}
 Now we check that $\g^\natural$ acts semisimply on  $ W^k(\g,x,f_{m,n})$. Since $W^k(\g,x,f_{m,n})(s)$ is  finite dimensional for every $s$, $[\g^\natural, \g^\natural]$ acts semisimply on $W^k(\g,x,f)$. To analyze the action of  the center $\C\varpi$ on $V^k(\g)\otimes F(A_{ch})\otimes F(A_{ne})$ we use the explicit formula given in 
 \cite[(2.4), (2.7), Theorem 2.1 (a)]{KW}
 $$
 J^{\{\varpi\}} = \varpi + \sum_{\beta \in \S_+}: \varphi_{[\varpi,u_\beta]}
  \varphi^{\beta}  : + \tfrac{1}{2}
        \sum_{\alpha \in \Sigma_{1/2}}
        : \Phi^{\alpha}\Phi_{[u_{\alpha},\varpi]}:.
$$
If $v\in \g_{1/2}$, then  
\begin{align*}
\tfrac{1}{2}
        \sum_{\alpha \in \Sigma_{1/2}}&
        [: \Phi^{\alpha}\Phi_{[u_{\alpha},\varpi]}:_\l v]=-\tfrac{1}{2}
        \sum_{\alpha \in \Sigma_{1/2}}
        [v_{-\l-T}: \Phi^{\alpha}\Phi_{[u_{\alpha},\varpi]}:]\\
&=-\tfrac{1}{2}
        \sum_{\alpha \in \Sigma_{1/2}}
        \langle v, u^{\alpha}\rangle_{ne}\Phi_{[u_{\alpha},\varpi]}-\tfrac{1}{2}
        \sum_{\alpha \in \Sigma_{1/2}}
        \Phi^{\alpha} \langle v,[u_{\alpha},\varpi]\rangle_{ne}=\Phi_{[\varpi,v]},
\end{align*}
hence the monomials $:T^{j_1}\Phi_{\a_1}\cdots T^{j_s}\Phi_{\a_s}:$ generate $F(A_{ne})$ and are eigenvectors for 
$$\tfrac{1}{2}
        \sum_{\alpha \in \Sigma_{1/2}}
        : \Phi^{\alpha}\Phi_{[u_{\alpha},\varpi]}:_0.
        $$
          Similarly, if $v\in\g_+$,
$$
        \sum_{\beta \in \S_{+}}
        [: \varphi_{[\varpi,u_{\be}]}\varphi^{\be}:_\l v]=\varphi_{[\varpi,v]},
$$
hence   $ \sum_{\beta \in \S_{+}}
        : \varphi_{[\varpi,u_{\be}]}\varphi^{\be}: _0$ acts semisimply on $F(A_{ch})$. Since $\varpi_0$ acts semisimply on $V^k(\g)$, we conclude that    $ J^{\{\varpi\}}_0$ acts semisimply on $V^k(\g)\otimes F(A_{ch})\otimes F(A_{ne})$, hence also on $W^k(\g,x,f_{m,n})$.
        We can therefore  apply Theorem \ref{structure}: we set
$$
W_i=\Psi(\left(\begin{array}{c|c}
J^{i-1}_m&0\\
\hline
0&0
 \end{array}\right)),\ 3\le i\le m,
 $$
 and
 $$
 G^{+}_i=\Psi(E_{m+i,1}),\ 1\le i\le n,\ G^{-}_i=\Psi(E_{m,m+i}),\ 1\le i\le n.
 $$
 It remains to check that $G^\pm_i$ are primary: by the first part of the proof, $ J^{\{\varpi\}}_0$ acts trivially on $W^k(\g,x,f)(n)$ for $n<\frac{m+1}{2}$. Since $J^{\{\varpi\}}_0G^{\pm}_i=\pm G^{\pm}_i$ for all $i$ and $ [J^{\{\varpi\}}_0,L(n)]=0$, we conclude that, if $n>0$, 
 $$\pm L(n)G^\pm_i=J^{\{\varpi\}}_0L(n)G^\pm_i=0.
 $$
 The same argument shows that, if $a\in\g^\natural$, then 
 $a_{n}G^\pm_i=0$ if $n>0$.
\end{proof}
\begin{rem} In  \cite[Theorem 9.5]{CL} the authors  prove the  following stronger result:  hook type $W$-algebras are unique, under certain non-degeneracy conditions. 
\end{rem}
Let $(\cdot\,|\,\cdot)$ denote the trace form on $sl(n+m)$. The Killing form of $\g$ is $K_\g(\cdot\, , \, \cdot )=2h^\vee(\cdot\,|\,\cdot)$ with $h^\vee=n+m$.
Recall from  \cite{KW}  that for any $W$-algebra the central charge is given by the formula
\begin{equation}\label{cc}
 c(\g ,x,f,k) =\tfrac{k\dim \g}{k+h^\vee}
  -12 k (x|x)
- \sum_{\alpha \in S_+} 
   (12 m^2_{\alpha}-12m_{\alpha}+2)-\tfrac{1}{2}
   \dim \g_{1/2}.
\end{equation}
where $m_\a=j$ if $\a\in S_j$. 
In the hook case we write $c_{m,n}(k)$ for $c(\g ,x,f_{m,n},k)$. To calculate explicitly $c_{m,n}(k)$ note that we have: 
$$(x|x)=\frac{1}{2}\binom{m+1}{3}.$$
 Moreover, using the data in Table 1, one computes that the contribution of the last two terms in \eqref{cc} is 
{\small
$$\sum_{j=1}^{(m-3)/2}(12 j^2-12 j+2)(2n+m-j) +(3(m-1)^2-6(m-1)+2))(2n+\tfrac{m+1}{2})
+\sum_{j=(m+1)/2}^{m-1}(12 j^2-12j+2)(m-j)
$$}
if $m$ is odd  and 
{\small
$$\sum_{j=1}^{m-1}(12 j^2-12 j+2)(m-j) +2n(3(m-1)^2-6(m-1)+2)+2n\!\sum_{j=0}^{m/2-2}(3 (2j+1)^2-6(2j+1)+2)
+n
$$}
if $m$ is even. In any case, the following formula holds (see \cite[Appendix C]{XY}).

%For a derivation of the formula see \cite[Appendix C]{XY}.

%\begin{lemma}  The central charge of $W^k(\g, f)$ is given by the following formula:
{\small
\begin{equation}\label{cc1}c_{m,n}(k)=- \frac{   k + k (1 - m - n) (m + n) + (m + n)^2 }{k+m+n} +  m ( k - m - n - m^2 (1 + k + m + n) + m (1 + 3 (m + n))). \end{equation}}
%\end{lemma}

Let  $\widetilde{W}_k(\g, x,  f)$ be a quotient of $ W^k(\g, x,  f)$. We have the following homomorphisms:
\begin{equation}\label{immaff}V(\g^{\natural}) \rightarrow  W^k(\g, x, f) \rightarrow  \widetilde{W}_k(\g, x, f).\end{equation}  Let us denote the image of the resulting homomorphism by $\mathcal V(\g^{\natural})$.

%The next Lemma gives a necessary condition for an embedding to be conformal.     
\begin{theorem}\label{MT}
The embedding $\mathcal V(\g^{\natural}) \hookrightarrow W_k(\g, x,f_{m,n})$ is conformal if and only if $$k=k_{m,n} ^{(i)},\quad 1\le i\le 4,$$ where
\begin{itemize}
\item[(1)] $k_{m,n} ^{(1)}  = -n-m + \frac{n+m}{m+1}=-\frac{m}{m+1} h^{\vee}$ and $n >1$,
\item[(2)] $k_{m,n}^{(2)} =  -n -m + \frac{1 + m   + n }{m}=
-\frac{(m-1) h^{\vee} -1}{m}$ and $n \ge 1$,
\item[(3)] $ k_{m,n}^{(3)}  =\frac{-1 + 2 m - m^2 + 2 n - m n}{m-1} = - \frac{(m-2) h^{\vee}+1}{m-1} = -h^{\vee} + \frac{h^{\vee}-1}{m-1}  $  and $n \ge 1$, $m >1$,
\item[(4)] $k_{m,n}^{(4)} = -\frac{(m-1)h^{\vee}}{m} = -h^{\vee} + \frac{h^{\vee}}{m}$.
\end{itemize}
\end{theorem}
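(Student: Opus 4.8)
The strategy is to apply the general criterion of Theorem \ref{Criterion} (equivalently Theorem \ref{Criterion-intr}) to $W=W^k(\g,x,f_{m,n})$, using the explicit set of strong generators produced in Theorem \ref{41}. The hypotheses \eqref{h1}, \eqref{h2}, \eqref{h3} need first to be checked: \eqref{h1} holds by the Remark following the basis choice (a conjugate linear involution of $\g$ fixing $x,f$ descends to $W^k$), \eqref{h2} holds because from Tables 1 and 2 there are no half-integral conformal weights below $\tfrac{m+1}{2}$ (when $m$ is even the generators $G^\pm_i$ have weight $\tfrac{m+1}{2}\ge\tfrac32$, and $W(\tfrac12)=0$), and \eqref{h3} is exactly the statement in Theorem \ref{41}(1) that $J^{\{a\}}$ are $L$-primary of weight $1$. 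Here $\aa=W(1)\cong\g^\natural\cong gl(n)$, and $L^\aa=L^{\g^\natural}$ is the Sugawara vector attached to $V^{\beta_k}(\g^\natural)$.

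Next, the set $S$ of Theorem \ref{Criterion} is taken to be $\{L\}\cup\{W_i: 3\le i\le m\}\cup\{G^\pm_i: 1\le i\le n\}$ together with $L^\aa$, but arranged so that one strong generator is $L-L^\aa$; the condition to verify is $(L-L^\aa)(2)X=0$ for $X\in S\cap W(2)$. Since all the $W_i$ have weight $\ge 3$ and the $G^\pm_i$ have weight $\tfrac{m+1}{2}$, the only possibility for $X\in S\cap W(2)$ besides $L-L^\aa$ itself is if $\tfrac{m+1}{2}=2$, i.e. $m=3$, where $G^\pm_i\in W(2)$; in that case one checks $(L-L^\aa)(2)G^\pm_i=0$ because $G^\pm_i$ is $L$-primary \emph{and} primary for $V(\g^\natural)$ (Theorem \ref{41}), so $(L-L^\aa)(2)G^\pm_i=L(2)G^\pm_i-L^\aa(2)G^\pm_i=0$ as $L(2)$ lowers weight by $2$ landing in $W(0)$ on which $G^\pm_i$ has no component, and $L^\aa(2)G^\pm_i=0$ since $G^\pm_i$ is $\g^\natural$-primary. (For $m\ne 3$ this check is vacuous.) Granting this, Theorem \ref{Criterion} says $L-L^\aa$ generates a proper ideal — equivalently the embedding $\mathcal V(\g^\natural)\hookrightarrow W_k(\g,x,f_{m,n})$ is conformal — if and only if $c=c_\aa$, i.e. $c_{m,n}(k)=c_{\g^\natural}(k)$.

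It then remains to solve the scalar equation $c_{m,n}(k)=c_{\g^\natural}(k)$. The left side is given by \eqref{cc1}. For the right side, $\g^\natural\cong gl(n)=sl(n)\oplus\C\varpi$; one computes $\beta_k$ on $sl(n)$ and on the center $\C\varpi$ using \cite[Theorem 2.1(c)]{KW} (the level on $sl(n)$ will be an affine linear function $k^\natural(k)$ of $k$, and the central piece contributes a Heisenberg term of central charge $1$ provided $\beta_k(\varpi,\varpi)\ne 0$), so that $c_{\g^\natural}(k)=\dfrac{k^\natural\dim sl(n)}{k^\natural+n}+1$. Setting the two rational functions of $k$ equal and clearing denominators yields a polynomial equation whose roots are to be identified. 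I expect this polynomial to factor, after simplification, into linear times (a quartic or a product of linear factors) whose roots are exactly $k_{m,n}^{(1)},\dots,k_{m,n}^{(4)}$ as listed, with the stated restrictions ($n>1$ for $k^{(1)}$ because otherwise $\g^\natural$ degenerates/the root coincides with a critical or excluded value; $m>1$ for $k^{(3)}$ since its formula has $m-1$ in the denominator). One must also discard the critical value $k=-h^\vee=-(n+m)$ and any value where $\beta_k(\varpi,\varpi)=0$ (which would change the Heisenberg contribution) or where $k^\natural=-n$ (critical for $sl(n)$), verifying these do not coincide with the four listed levels in the stated ranges.

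**Main obstacle.** The conceptual content is the reduction via Theorem \ref{Criterion}, which is already in hand; the genuine work — and the step most likely to hide subtleties — is the explicit computation and factorization of $c_{m,n}(k)-c_{\g^\natural}(k)$, in particular getting $\beta_k$ on the one-dimensional center $\C\varpi$ right (the normalization of $\varpi$ in \eqref{varpi} and the induced form, cf. \eqref{actionvarpi}), and then checking that the four roots are genuinely distinct, non-critical, and that the boundary restrictions on $m,n$ are exactly those needed for the generator set of Theorem \ref{41} and the hypotheses of Theorem \ref{Criterion} to remain valid (e.g. for small $n$ some $G^\pm$ generators may be absent or the form on $\g^\natural$ may degenerate).
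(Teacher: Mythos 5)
Your reduction to the central-charge equation is the same as the paper's: verify \eqref{h1}--\eqref{h3}, take the strong generators of Theorem \ref{41}, note that $S\cap W(2)$ is empty unless $m=3$, in which case $(L-L^{\aa})(2)G^{\pm}_i=0$ because the $G^{\pm}_i$ are primary for both $L$ and $V(\g^{\natural})$, and then invoke Theorem \ref{Criterion}. Up to that point your argument matches the paper.

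The genuine gap is in how you treat the one-dimensional center $\C\varpi$. You propose to use $c_{\g^{\natural}}(k)=\frac{k_1(n^2-1)}{k_1+n}+1$ throughout and to \emph{discard} any $k$ with $\beta_k(\varpi,\varpi)=0$ (i.e. $k_0=k+\frac{(m-1)(m+n)}{m}=0$), expecting the resulting single polynomial to have all four levels as roots. Both points are wrong. First, the equation with the Heisenberg contribution $+1$ produces only $k^{(1)}_{m,n},k^{(2)}_{m,n},k^{(3)}_{m,n}$; the fourth level does not solve it. Second, $k_0=0$ happens exactly at $k=-\frac{(m-1)(m+n)}{m}=k^{(4)}_{m,n}$, which the theorem asserts \emph{is} conformal, so discarding that value loses part (4) entirely (and leaves the ``only if'' direction unexamined there). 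The correct handling, which the paper's general setup is built for (the kernel $\aa'$ of the form, Lemma \ref{ideala}, and the factor $(1-\delta_{k_0,0})$ in the definition of $L^{\aa}$ and $c_{\aa}$), is: when $k_0=0$ the central direction lies in $\aa'$, the Sugawara vector $L^{\aa}$ contains no Heisenberg summand, and the conformality condition becomes $c_{m,n}(k)=\frac{k_1(n^2-1)}{k_1+n}$ with no $+1$; one then checks directly that $k^{(4)}_{m,n}$ satisfies this degenerate equation. So the case $\beta_k(\varpi,\varpi)=0$ must be analyzed separately rather than excluded, and without it your proof does not establish the stated list of conformal levels.
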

\begin{rem}
In the following cases we already proved that the above embeddings are conformal:
\begin{itemize}
\item   $m=1$, $k=k_{m,n} ^{(1)} = -\frac{n+1}{2}$,  since then $W_k(\g, x, f_{m,n}) = V_k(sl(n+1))$ (cf. \cite{AKMPP}).
\item $m=1$, $k=k_{m,n} ^{(2)} = 1$, since then  $W_k(\g, x, f_{m,n}) = V_1(sl(n+1))$.

\item   $m=2$,  $k =k_{m,n} ^{(1)} =-\tfrac{2h^{\vee}}{3}$ since  then $W_k(\g,f_{m,n}) = W_k(sl(n+2), f_{\theta})$ and $k$ is a conformal level (cf. \cite{AKMPP-JA},  \cite{AKMPP-JJM}).

\item   $m=2$,  $k =k_{m,n} ^{(2)} = -\frac{h^{\vee}-1}{2}$ since  then $W_k(\g,f_{m,n}) = W_k(sl(n+2), f_{\theta})$  and $k$ is conformal level (cf. \cite{AKMPP-JA},  \cite{AKMPP-JJM}).

\item   $m=2$,  $k =k_{m,n} ^{(3)} = -1$ since  then $W_k(\g,f_{m,n}) = W_k(sl(n+2), f_{\theta})$  and $k$ is a collapsing  level (cf. \cite{AKMPP-JA},  \cite{AKMPP-JJM}).
\item $m=2$, $k =k_{m,n} ^{(4)} = -\frac{h^{\vee}}{2}$  is collapsing level for $W_k(\g,f) = W_k(sl(n+2), f_{\theta})$.
\end{itemize}
\end{rem}
 
%\begin{rem}\label{Creu} T. Creutzig in \cite{TC} conjectured that 
%  the embedding 
%$\mathcal V (\g^{\natural}) \hookrightarrow W_k(\g, x, f_{m,n})$ is conformal when  $ k=k_{m,n} ^{(1)}$.
%\end{rem}

\begin{proof}[Proof of Theorem \ref{MT}] 
We  apply Theorem \ref{Criterion}.
We choose the bilinear form on $\g_0^\natural$ 
setting $\langle \varpi,\varpi\rangle_0= (\tfrac{n}{m})^2+1$.  We choose the conjugate linear involution $\phi$ to be the conjugation with respect to $sl(m+n,\R)$, so that 
$\phi(x)=x, \phi(L)=L$. 
 We choose 
 \begin{equation}\label{S} S= span\left(\{W_i\mid 3\le i\le m-1\}\cup\{G^\pm_i\mid 1\leq i\leq n\}\right).
\end{equation}
If $(m+1)/2\ne 2$, i.e. $m\ne 3$, then $S\cap W^k(\g,x,f_{m,n})(2)=\emptyset$, hence the hypotheses of Theorem \ref{Criterion} are vacuously verified. If $m=3$, we have to check that
$(L-L^\a)(2)G^\pm_i=0$: this readily follows from Theorem \ref{41}.\par
By Theorem \ref{Criterion},  it is enough  to equate the central charge \eqref{cc1} of $W_k(\g, x,f_{m,n})$ and the central charge  of $\mathcal V (\g^{\natural})$. By \cite[Theorem 2.1 (c)]{KW}, 
we have  that $\mathcal V(\g^{\natural})$ is a quotient of  $V^{k_0}(\C\varpi)\otimes V^{k_1}(sl(n))$, where 
$$k_0= k+ \frac{(m-1)(m+n)}{m},\quad  k _1 = k + m-1.$$
Moreover  $h_1^\vee=n, h_0^\vee=0$. We obtain the equations 
\begin{equation}\label{ecc} c_{m,n}(k)= \frac{k _1(n^2-1)}{k_1+n}+(1-\d_{{k_0},0}).\end{equation}
Solving for $k$, we get
$k= k_{m,n} ^{(i)}, 1\leq i\leq 3$  if $k_0\ne 0$. If instead $k_0=0$, then $k=k_{m,n} ^{(4)}$ and one readily verifies that this value of $k$ satisfies \eqref{ecc}.
\end{proof}
\section{Collapsing levels}
% {\color{blue}
  Assume that $k$ is a  conformal level. Then $ \bar L= L - L^{\g^{\natural}}$ belongs to the maximal ideal of   $W^k(\g, x,  f)$.  We define
  % $$\overline W_k(\g, x, f)  =  W^k(\g, x,  f)\cdot (L - L^{\g^{\natural}}). $$
   $$\overline W_k(\g, x, f)  = W^k(\g, x,  f)/ W^k(\g, x,  f)\cdot(L - L^{\g^{\natural}}). $$
Recall from the introduction that   a level $k$ is  called {\sl strongly collapsing} if $\mathcal V(\g^\natural)= \overline W_k(\g,x,f)$ (see \eqref{immaff}).

\begin{remark}  \label{ex-nstr}Clearly,  any strongly collapsing level is also a collapsing.  In \cite[Example 4.4]{AKMPP-JA}, we showed  that can exist collapsing levels which are not strongly collapsing. In particular, this holds for minimal affine $W$--algebras in the following cases:
\begin{itemize}
\item $\g = sl(3)$, $k=-1$; 
\item$\g=sp(4)$, $k=-2$.
\end{itemize}
\end{remark}

%}
%this definition has been introduced and analyzed in  \cite{AKMPP-JA} when $f$ is a minimal 
%nilpotent element and extended to the general case in \cite{AM}.
 \begin{theorem}\label{coll} For the hook $W$-algebra $W(sl(n+m),x,f_{m,n})$ the following are strongly collapsing levels:
 \begin{enumerate}
 \item  $k_{m,n}^{(3)},\ n\ne m-1$,
 \item $k_{m,n}^{(4)}$.
 \end{enumerate}
 \end{theorem}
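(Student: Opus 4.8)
The plan is to apply Theorem \ref{Criterion2-intr} (i.e. Theorem \ref{Criterion2}) directly, so the whole argument reduces to a central-charge bookkeeping computation for the strong generators produced in Theorem \ref{41}. First I would record the setup: at a conformal level $k$ the vector $\bar L=L-L^{\g^\natural}$ generates a proper ideal $I$ (by Theorem \ref{MT} and Theorem \ref{Criterion}), and $\overline W_k(\g,x,f_{m,n})=W^k/I$ has strong generators coming from $\aa=\g^\natural\cong gl(n)$ together with $\pi_I(S)$, where $S$ is the span of the fields $W_3,\dots,W_m$ of conformal weights $3,\dots,m$ and the fields $G^\pm_i$ of conformal weight $\tfrac{m+1}2$, as in \eqref{S}. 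By Theorem \ref{41} these decompose as $\aa$-modules: each $W_i$ carries the trivial $gl(n)$-action with $\D_i=i$, while $\sp\{G^+_i\}\cong\C^n$ and $\sp\{G^-_i\}\cong(\C^n)^*$ with $L(0)$-eigenvalue $\D=\tfrac{m+1}2$. Theorem \ref{Criterion2} then says: any generator $V_i$ that genuinely survives in a \emph{minimal} strong-generating set must have $C_i=\D_i$, where $C_i$ is computed from the Casimir eigenvalues via \eqref{Cj} using the levels $k_0=k+\tfrac{(m-1)(m+n)}{m}$ and $k_1=k+m-1$ of the $\C\varpi$ and $sl(n)$ factors from the proof of Theorem \ref{MT}.

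Next I would carry out the computation of $C_i-\D_i$ for each type of generator at $k=k_{m,n}^{(3)}$ and at $k=k_{m,n}^{(4)}$, and show it is nonzero in all cases. For the fields $W_i$ (trivial $gl(n)$-module) the Casimir eigenvalues $c^{(i)}_j$ all vanish, so $C_i=0$ while $\D_i=i\ge 3$; hence $C_i\ne\D_i$ automatically — these never occur as essential generators. The only case requiring real work is the pair $G^\pm_i$: here $c^{(i)}_0$ is the $\C\varpi$-Casimir eigenvalue, computable from \eqref{actionvarpi} and the normalization $\langle\varpi,\varpi\rangle_0=(\tfrac nm)^2+1$ fixed in the proof of Theorem \ref{MT}, and $c^{(i)}_1$ is the $sl(n)$-Casimir eigenvalue on the defining (resp. dual) representation. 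Plugging these into \eqref{Cj} gives $C=\tfrac{c^{(1)}_1}{2(k_1+n)}+\tfrac{c^{(1)}_0}{2k_0}$ (with the second term dropped when $k_0=0$, which is exactly the case $k=k_{m,n}^{(4)}$), and I would simplify $C-\tfrac{m+1}2$ as a rational function of $m,n$. The assertion is that at $k=k_{m,n}^{(3)}$ this quantity is nonzero precisely under the stated restriction $n\ne m-1$ (the excluded value being where $C=\D$, i.e. where $G^\pm$ would be singular and the collapse could fail to be \emph{strong}), and that at $k=k_{m,n}^{(4)}$ it is nonzero with no restriction. Once $C_i\ne\D_i$ for every $i\in\mathcal J$, Theorem \ref{Criterion2} forces $\mathcal K=\emptyset$, so $\overline W_k=\widetilde V(\aa)=\mathcal V(\g^\natural)$, which is the definition of strongly collapsing.

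The main obstacle is the explicit evaluation of the Casimir eigenvalue $c^{(1)}_0$ of the central element $\varpi$ on the $G^\pm$-generators and the subsequent algebraic identity. One must be careful that \eqref{Cj} uses the \emph{normalized} forms $\langle\cdot,\cdot\rangle_i$ on each simple factor while the level-dependent coefficients $k_0,k_1$ absorb the scaling; getting the normalization of $\langle\varpi,\varpi\rangle_0$ right (it is $(\tfrac nm)^2+1$, matching the choice in the proof of Theorem \ref{MT}) is essential, since an incorrect normalization would shift $C$ and corrupt the identity. I would also double-check the edge cases: $m=1$ (where there are no $W_i$ and $\g^\natural=gl(n)$ behaves specially — covered by the earlier remark identifying the $W$-algebra with $V_k(sl(n+1))$) and small $m$ where $S\cap W(2)$ might be nonempty, though for $k_{m,n}^{(3)},k_{m,n}^{(4)}$ the relevant hypothesis of Theorem \ref{Criterion} was already verified in the proof of Theorem \ref{MT}. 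A sanity check is the $m=2$ specialization: there $k_{2,n}^{(3)}=-1$ and $k_{2,n}^{(4)}=-\tfrac{h^\vee}2$ should reproduce the known collapsing levels for the minimal $W$-algebra $W_k(sl(n+2),f_\theta)$ listed in the remark following Theorem \ref{MT}, and the exclusion $n\ne m-1=1$ should be consistent with the minimal-case classification of \cite{AKMPP-JA}.
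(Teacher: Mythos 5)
Your proposal is correct and follows essentially the same route as the paper: the paper's proof likewise applies Theorem \ref{Criterion2} to the strong generators of Theorem \ref{41}, observes that the trivial $gl(n)$-modules $W_i$ have $C_i=0\ne \D_i$, and computes $C_{m-1}=C_m=(1-\d_{k_0,0})\tfrac{m+n}{2mnk_0}+\tfrac{n^2-1}{2n(k_1+n)}$ for the $G^\pm$ factors, finding it equals $\tfrac{m+1}{2}$ at $k^{(3)}_{m,n}$ exactly when $n=m-1$ and is always $<\tfrac{m+1}{2}$ at $k^{(4)}_{m,n}$. The only part you leave as an assertion is this final rational-function verification, which is exactly the "direct verification" the paper also performs.
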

 \begin{proof} We apply Theorem \ref{Criterion2}. By Theorem \ref{41}, we can choose  $S$ as in \eqref{S}.  
 Let $\C_t$ be the 1-dimensional representation of $\C\varpi$ with $\varpi$ acting by $\varpi\cdot 1=t$. Then, by \eqref{actionvarpi}, as a $(\C\varpi\oplus sl(n))$--module, 
\begin{equation}\label{SS} S=S_0\oplus (\C_{\frac{n+m}{m}}\otimes \C^n)\oplus (\C_{-\frac{n+m}{m}}\otimes (\C^n)^*),
\end{equation}
 where $S_0$ is the  isotypic component of the trivial  representation of $\g^\natural$. We now compute $C_1,\ldots,C_m$  (cf. \eqref{Cj}),  where $C_1,\ldots, C_{m-2}$ correspond
 to the $m-2$ copies of the trivial representation occurring in $S_0$ and  $C_{m-1}, C_m$ to the rightmost factors in \eqref{SS}. We have 
 $$C_1=\cdots=C_{m-2}=0,\quad C_{m-1}=C_m=(1-\d_{k_0,0})\frac{m+n}{2
   mnk_0}+\frac{n^2-1}{2n (k_1+n)}.$$ 
   A direct verification shows that if $k= k_{m,n} ^{(3)}$ then
 $$
 C_{m-1}=C_m=\frac{(m-1) \left(m+n^2+n-1\right)}{2 n^2},
 $$
 hence $C_{m-1}=C_m=\frac{m+1}{2}$ if and only if $n=m-1$.
 
  If $k=k_{m,n} ^{(4)}$ then  $C_{m-1}=C_m=\frac{m \left(n^2-1\right)}{2 n^2}$ hence, $C_{m-1}=C_m<\frac{m+1}{2}$.
 \end{proof}
 \section{Non-collapsing conformal  levels}
% { \color{blue} D: Unfortunately, we can prove here only that $k_{m,n}^ {(1)}$, $k_{m,n}^ {(2)}$ are not strongly collapsing.
% We still need to analyse possibility that some of the are collapsing.  
 %  I believe that $k_{m,n}^ {(1)}$ are not collapsing.
 %\item But surprisingly,  $k_{m,n}^ {(2)}$   is always  collapsing (see Corollary \ref{coll-2}).
%} 
 
% {\bf In this section we assume that $k=k_{m,n}^{(1)}$ and $\frac{m+1}{n-1} \notin {\Z}_{\ge 0}$, or   $k=k_{m,n}^{(2)}$ and $\frac{m}{n+1} \notin {\Z}_{\ge 0}$.}
Set 
$$I= W^k(\g,x,  f). (L - L^{\g^{\natural}}).$$
 \begin{theorem}\label{collnc} For the hook $W$-algebra $W(sl(n+m),x,f_{m,n})$ the  conformal levels $k_{m,n}^ {(1)}$ for $n >1$  and  $k_{m,n} ^{(2)}$    are not  strongly collapsing. \end{theorem}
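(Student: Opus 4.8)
The plan is to apply Theorem \ref{Criterion2} in the contrapositive direction: to show that $k_{m,n}^{(1)}$ ($n>1$) and $k_{m,n}^{(2)}$ are \emph{not} strongly collapsing, it suffices to exhibit an irreducible $\g^\natural$-submodule $V_i \subset S$ for which $C_i \ne \D_i$, \emph{and} for which $V_i$ cannot be discarded from the set of strong generators — i.e. $\pi_I(V_i) \ne 0$. Actually, the cleaner route is the following: by Theorem \ref{coll}-type computations, the relevant generators to look at are the fields $G^\pm_i$ of conformal weight $\D = \tfrac{m+1}{2}$, which as a $(\C\varpi\oplus sl(n))$-module decompose as in \eqref{SS}. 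First I would compute $C_{m-1}=C_m$ at $k = k_{m,n}^{(1)}$ and at $k = k_{m,n}^{(2)}$ using the formula
$$C_{m-1}=C_m=(1-\d_{k_0,0})\frac{m+n}{2mnk_0}+\frac{n^2-1}{2n(k_1+n)},$$
with $k_0 = k + \tfrac{(m-1)(m+n)}{m}$ and $k_1 = k+m-1$, exactly as in the proof of Theorem \ref{coll}. The point will be that for these two levels one gets $C_{m-1}=C_m \ne \tfrac{m+1}{2} = \D$.

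Once $C_{m-1}\ne \D$ is established, the remaining and more delicate point is to argue that the $G^\pm$ generators survive in $\overline W_k$, i.e. that $\pi_I(G^\pm_i) \ne 0$, so that they cannot be omitted from the minimal strong generating set $\mathcal K$ of $\overline W_k$; this is what forces $\mathcal K$ to be nonempty and hence $\overline W_k \ne \mathcal V(\g^\natural)$. Here I would use the contrapositive of Theorem \ref{Criterion2}: if \emph{all} $C_i \ne \D_i$ for $i\in\mathcal J$, then $\mathcal K = \emptyset$ and $\overline W_k \cong \widetilde V(\aa)$ — but that is the \emph{opposite} of what we want, so in fact the right logical structure is to show that \emph{not all} generators can be dropped. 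The subtlety is that Theorem \ref{Criterion2} says $C_j = \D_j$ for $j \in \mathcal K$; so if $C_{m-1}=C_m\ne\D$, then the indices $m-1,m$ are \emph{not} in $\mathcal K$, which alone does not prevent $\mathcal K$ from being empty via the $W_i$'s (but those have $C_i=0\ne\D_i$ too in general). So the real content is: the trivial-type generators $W_i$ and the $G^\pm$ generators \emph{all} have $C_i\ne\D_i$, yet $\overline W_k$ is strictly larger than $\mathcal V(\g^\natural)$ — meaning Theorem \ref{Criterion2} cannot apply, which can only happen if its hypotheses fail, i.e. if the action of $\aa$ is not completely reducible, or rather if $\overline W_k \ne W_k$. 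The cleanest argument is therefore a dimension/character count showing $\mathcal V(\g^\natural) \subsetneq \overline W_k$ directly.

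Concretely, I would proceed as follows. By Theorem \ref{41} and Theorem \ref{structure}, $W^k(\g,x,f_{m,n})$ is strongly and freely generated by $J^{\{a\}}$, $L$, the $W_i$ ($3\le i\le m$), and the $G^\pm_i$; passing to the quotient by $I$, the generator $L$ is identified with $L^{\g^\natural}$, so $\overline W_k$ is strongly generated by $\mathcal V(\g^\natural)$ together with $\pi_I(W_i)$ and $\pi_I(G^\pm_i)$. The claim "$k$ is strongly collapsing" would force \emph{all} of these extra generators to lie in $\mathcal V(\g^\natural)$, in particular each would have to be a normally-ordered polynomial in the $J^{\{a\}}$ of the appropriate conformal weight. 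For the $G^\pm_i$: they transform in $\C^n \oplus (\C^n)^*$ as $sl(n)$-modules and have $\varpi$-eigenvalue $\pm\tfrac{n+m}{m}$. Any element of $\mathcal V(\g^\natural)$ of $\varpi$-eigenvalue $\pm\tfrac{n+m}{m}$ built from $gl(n)$-currents would have to be a linear combination of normally-ordered products of the currents $J^{\{E_{n+i,n+j}\}}$; but then its $L^{\g^\natural}(0)$-weight would be a positive integer, not $\tfrac{m+1}{2}$ when $m$ is even — giving an immediate contradiction in the even case. When $m$ is odd, $\tfrac{m+1}{2}\in\Z$ and this parity argument fails, so one must instead use $C_{m-1}=C_m\ne\D$: if $\pi_I(G^+_i) \in \mathcal V(\g^\natural)$ were nonzero, being primary and an $sl(n)$-highest-weight vector for the $\C^n$ it would have to generate a $V(\g^\natural)$-submodule whose top has $L^{\g^\natural}(0)$-eigenvalue $C_{m-1}$, forcing $C_{m-1}=\D=\tfrac{m+1}{2}$, contradicting the computation. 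Hence $\pi_I(G^+_i)\ne 0$ and $\pi_I(G^+_i)\notin\mathcal V(\g^\natural)$ — unless $G^+_i \in I$, but $G^+_i$ is a free generator of $W^k$ not in the span of $L-L^{\g^\natural}$ and its descendants in low enough weight, so for $m\ge 2$ one checks $G^\pm_i \notin I$ by comparing with the filtration-degree argument from the proof of Theorem \ref{structure}. This yields $\mathcal V(\g^\natural)\subsetneq\overline W_k$, i.e. $k$ is not strongly collapsing.

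The main obstacle I anticipate is precisely the step $\pi_I(G^\pm_i)\ne 0$: one needs to rule out the possibility that $G^\pm_i$ already lies in the ideal generated by $\bar L = L - L^{\g^\natural}$. Since explicit OPE formulas are unavailable in the non-minimal case, the argument has to be structural. The key observation to exploit is that $I$ is a $\g^\natural$-submodule and is graded by conformal weight, and that $I \cap W^k(\g,x,f_{m,n})(\ell) = 0$ for $\ell < 2$ while $I\cap W^k(\ell)$ for small $\ell$ is spanned by $:J^{\{a\}}\,\bar L:$-type elements and $T^j \bar L$; comparing $\varpi$-eigenvalues and $sl(n)$-types shows these cannot contain a nonzero multiple of $G^\pm_i$ in weight $\tfrac{m+1}{2}$ (the $\bar L$-descendants are $gl(n)$-invariant and have $\varpi$-eigenvalue $0$, whereas $G^\pm_i$ has nonzero $\varpi$-eigenvalue). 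Thus $G^\pm_i \notin I$, completing the argument. I would also remark that this same reasoning shows more: the $W_i$ with $\varpi$-eigenvalue $0$ and $C_i = 0 \ne \D_i$ likewise survive, but it is the $G^\pm$ that give the shortest proof, since their nonzero $\varpi$-charge makes the "not in $I$" and "not in $\mathcal V(\g^\natural)$" checks both immediate.
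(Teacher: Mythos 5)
Your computation at the heart of the proposal is wrong, and it matters. At $k=k_{m,n}^{(1)}$ and $k=k_{m,n}^{(2)}$ one finds $C_{m-1}=C_m=\tfrac{m+1}{2}=\D$, not $C_{m-1}\ne\D$: for instance at $k^{(1)}_{m,n}$ one has $k_0=-\tfrac{m+n}{m(m+1)}$ and $k_1+n=\tfrac{n-1}{m+1}$, so $C_{m-1}=-\tfrac{m+1}{2n}+\tfrac{(m+1)(n+1)}{2n}=\tfrac{m+1}{2}$. This equality is exactly why Theorem \ref{Criterion2} fails to force collapse at these levels (surviving generators must satisfy $C_j=\D_j$, and here the $G^\pm_i$ are allowed to survive), and it invalidates your odd-$m$ branch, where you derive a contradiction from $C_{m-1}\ne\D$. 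That branch is also unnecessary: every element of $\mathcal V(\g^\natural)$ has $J(0)$-charge zero while $G^\pm_i$ has charge $\pm1$, so the whole theorem reduces to the single claim $\pi_I(G^\pm_i)\ne 0$, i.e. $G^\pm_i\notin I=W^k(\g,x,f_{m,n})\cdot\bar L$.

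That claim is precisely where your argument has a genuine gap. You assert that the weight-$\tfrac{m+1}{2}$ part of $I$ is spanned by $T^j\bar L$ and $:J^{\{a\}}\bar L:$-type elements, hence is $gl(n)$-invariant of charge zero; but $I$ a priori also contains elements such as $(G^+_i)_{1}\bar L$, which has charge $+1$ and conformal weight exactly $\tfrac{m+1}{2}$. Your description of $I$ presupposes that all such elements vanish, which is the very thing to be proved. The paper's proof supplies exactly this: it first shows $\bar L$ is a singular vector for the strong generators --- $\tau_{i}\bar L=0$ for $i\ge1$ (formula \eqref{t}, using skew-symmetry and the fact that the $G^\pm_i$ are primary for both $L$ and $V(\g^\natural)$), while $W_p(2)\bar L$, $W_p(1)\bar L$ and $W_p(0)\bar L$ are handled using properness of $I$, the non-vanishing of the levels of $\mathcal V(gl(n))$, and non-criticality; then, $I$ being a quotient of a Verma-type module, it is spanned by PBW monomials applied to $\bar L$, and a conformal-weight estimate shows that any monomial involving at least one $G$-mode has weight at least $\tfrac{m+3}{2}>\tfrac{m+1}{2}$. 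Combined with the charge argument (an element of charge $\pm1$ must involve such a mode), this excludes $G^\pm_i\in I$. Without establishing that $\bar L$ is singular and without the weight estimate, your structural argument does not close, so the crux of the theorem remains unproved.
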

%{\color{red} Here is an easy proof. Please check.}
\begin{proof} One readily computes that, if $k=k^{(i)}_{m,n}$, $i=1,2$, then $C_{m-1}=C_m=\frac{m+1}{2}$. We need only to check whether 
 $$  G_i ^{\pm} \notin   I.$$
 Set  $\bar L = W_2 = L - L^{\g^{\natural}}$.  Let $\tau \in \mbox{span}_{\C} \{ G_1^{\pm}, \dots, G_n ^{\pm}\}$.
Then for  each $i \in {\Z}_{\ge 1}$:
\begin{equation}\label{t}\tau_{i} \bar L =-[\bar L (-2), \tau_{i}] {\bf 1} = - \sum_{j=0} ^{\infty} {-1 \choose j} (\bar L(j-1) \tau )_{i-j-1} {\bf 1} =0. \end{equation}
 The last equality follows since, as shown in Theorem \ref{41}, the fields $G^\pm _i, i=1, \dots, n $  are primary for both $L$ and $V (\g^{\natural})$, hence $\bar L(j)\tau=0$ for $j\geq 0$.
For $2 \le p \le m$, we set $W_p (s) = (W_{p})_{s+p-1}$.
Then for $s \ge 1$ we have
$ W_p (s) \bar L = 0$. This is clear for $s\geq 3$; if $W_p (2) \bar L \ne 0$ then $I$ would  not be a proper ideal. Finally, if $s=1$ then $W_p (2) \bar L\in gl(n)_{-1}\vac$. If this vector is non-zero, then it belongs to 
$I\cap gl(n)_{-1}\vac$. This is not possible, since
\begin{align*}
\mathcal V(gl(n))&=V^{-\tfrac{m+n}{1+m}}(\C\varpi)\otimes V^{-\tfrac{1+mn}{1+m}}(sl(n))&&
\text{ if $k=k^{(1)}_{m,n}$,}\\
\mathcal V(gl(n))&=V^{1}(\C\varpi)\otimes V^{\tfrac{1+n(1-m)}{m}}(sl(n))&&\text{ if $k=k^{(2)}_{m,n}$,}\\
\end{align*}
and the levels appearing in the right hand sides are always non-zero (recall that we are assuming $n>1$).

Moreover $ W_p (0) \bar L = a \bar L + u$ for certain $a \in {\C}$ and $u \in V (\g^{\natural}) \cap I$.  Assume that $u$ is non-zero, then $u$ is a subsingular  vector in  $V (\g^{\natural})$ of zero $\g^{\natural}$-weight. Such  subsingular vector cannot exist since
$V (\g^{\natural})$ is not a vertex algebra at the critical level. Thus $u=0$ and we conclude that  $ W_p (0) \bar L = a \bar L$.

Therefore $\bar L$ is a singular vector in $ W^k(\g,x,  f)$. In particular, $I$ is a quotient of a Verma module, hence it is linearly spanned by monomials 
   $$ U_s =    y_1 (-q_1) \cdots  y_t (-q_t) x_1 (-n_1-1) \cdots x_r (-n_r-1) (g^1)_{-m_1} \cdots (g^s)_{-m_s} \bar L  , $$
 where $y_i \in \{ W_p, p=2, \dots, m\}$,   $x^i \in \g^{\natural}$,   $g^j \in \{ G_1 ^{\pm}, \dots, G_n ^{\pm}\}$,  $q_i, n_i  \in {\Z}_{\ge 0}$, and $m_i\in \Z,\, m_i\geq -\tfrac{m-1}{2},\,m_1\geq m_2\geq\ldots\geq m_s$. Formula
 \eqref{t} implies $m_s\geq 0$, hence all $m_i$ are non-negative integers.
 Denote the conformal weight  of $U_s$  by $\mbox{wt}(U_s)$ . Then we have

 $$ \mbox{wt} (U_s) \ge 2+ s\frac{m+1}{2} + (m_1 + \cdots + m_s) -s \ge 2 + s\frac{m-1}{2}. $$
 This implies that if $s \ge 1$, then $ \mbox{wt} (U_s) \ge \frac{m+3}{2}$.
 If $G_i ^{\pm} \in I$, then it can be written as a a linear combination of elements $U_{s_i}$ with $s_i \ge 1$ of conformal weight $\frac{m+1}{2}$. This is not possible. The claim follows.
\end{proof}

  We set \begin{equation}\label{pma}A^{\pm} = \mathcal V (\g^{\natural}). \mbox{span}_{\C} \{ G^{\pm}_i, i=1, \dots, n \},\end{equation}
and
 \begin{equation}\label{charge} \overline W_k(\g, x, f) = \bigoplus_{\ell \in {\Z}}   \overline W_k ^{(\ell)}, \quad 
   \overline W_k ^{(\ell )}= \{ v \in \overline W_k(\g, x, f) \ \vert \ J(0) v = \ell v \}, \end{equation}
{  where \begin{equation}\label{J}J=J^{\{-\tfrac{n+m}{m}\varpi\}}\end{equation}
   (cf. \eqref{varpi}).}
   
 { We believe (see Conjecture \ref{non-coll-1})   that  $k =k_{m,n}^{(1)}$ is never a collapsing level. We will prove this  in the special case when $k$ is admissible. The next result  supports  this  conjecture,  by showing 
   that each component $\overline W_k ^{(\ell )}$ is always  non-zero. In Remark  \ref{rem-non-coll-1} we shall explain how this result is related with Conjecture \ref{non-coll-1}.  }
  \begin{proposition}\label{75} Let $k =k_{m,n}^{(1)}$. For each $p \in {\Z}_{>0}$ we have
  $$ : (G_{i} ^{\pm} )^p : \ne 0  \quad \mbox{in}  \ \   \overline W_k(\g, x, f_{m,n}).   $$
  In particular, each   $\overline W_k ^{(\ell )}$ is a non-zero $\overline W_k ^{(0)}$--module.
  \end{proposition}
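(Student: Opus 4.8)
Write $\overline W_k=\overline W_k(\g,x,f_{m,n})=W^k(\g,x,f_{m,n})/I$ with $I=W^k(\g,x,f_{m,n})\cdot\bar L$; the first assertion is equivalent to $:(G_i^{\pm})^p:\ \notin I$, these normally ordered powers being nonzero in $W^k(\g,x,f_{m,n})$ by the freeness in Theorem~\ref{41}. I would \emph{not} use the invariant Hermitian form: by Remark~\ref{ker} its kernel is the whole maximal ideal, so a norm argument would give the stronger --- and possibly false --- statement that $:(G_i^{\pm})^p:$ is already nonzero in the simple quotient $W_k(\g,x,f_{m,n})$, which fails as soon as $k^{(1)}_{m,n}$ is collapsing. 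Instead the plan is to exhibit an explicit mode operator that kills $I$ but sends $:(G_i^{\pm})^p:$ to a nonzero multiple of $\vac$. Granting $:(G_i^{\pm})^p:\ \neq 0$ in $\overline W_k$, the last sentence of the proposition is immediate: applying the derivation $J(0)$ repeatedly to $(G_i^{\pm})_{-1}:(G_i^{\pm})^{p-1}:$ shows (using \eqref{actionvarpi}, \eqref{J}) that $:(G_i^{\pm})^p:$ is a $J(0)$--eigenvector of eigenvalue $\pm p$, so it lies in $\overline W_k^{(\pm p)}$; since $J(0)$ acts by a derivation of all the $n$--th products the eigenvalues add, whence $\overline W_k^{(0)}$ is a vertex subalgebra, each $\overline W_k^{(\ell)}$ a module over it, and all of them are nonzero (for $\ell=0$ one has $\vac$).

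For the computation, put $h=\tfrac{m+1}{2}$ (the conformal weight of the $G_i^{\pm}$) and take $e:=(G_i^-)_{m}$, the mode of degree $m=2h-1$: it lowers conformal weight by $h$ and $J(0)$--charge by $1$, so $e^{p}\,:(G_i^+)^p:$ lies in $W^k(\g,x,f_{m,n})(0)=\C\vac$, say $e^{p}\,:(G_i^+)^p:\ =\Lambda_p\,\vac$. Since $I$ is a proper ideal (so $\vac\notin I$) and $\pi_I$ transports this identity to $\overline W_k$, it suffices to prove $\Lambda_p\neq0$, and I would compute $\Lambda_p$ in $\overline W_k$, where $\bar L=0$ and $L=L^{\g^{\natural}}$. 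The key structural input at $k=k^{(1)}_{m,n}$ is that $C_{m-1}=C_m=h$ (from the proof of Theorem~\ref{collnc}), which together with the primarity of $G_i^{\pm}$ for $L$ and for $V(\g^{\natural})$ (Theorem~\ref{41}) gives $\bar L(j)G_i^{\pm}=0$ for all $j\ge0$ --- for $j\ge1$ by primarity, and for $j=0$ because then $L(0)G_i^{\pm}=h\,G_i^{\pm}=C_\bullet\,G_i^{\pm}=L^{\g^{\natural}}(0)G_i^{\pm}$ --- so in $\overline W_k$ all the $\bar L$--contributions below drop out. Then induct on $p$: writing $:(G_i^+)^p:\ =(G_i^+)_{-1}:(G_i^+)^{p-1}:$ and expanding
\[
[e,(G_i^+)_{-1}]=\sum_{s=0}^{m}\binom{m}{s}\big((G_i^-)_{s}G_i^+\big)_{m-1-s},
\]
one isolates the scalar term $s=m$, equal to $c'\,\mathrm{id}$ with $(G_i^-)_{m}G_i^+=c'\,\vac$ (a nonzero multiple of the leading term of the $G_i^+G_i^-$ operator product); the term $s=m-1$, equal to $m\,(a_i)_0$ with $a_i=(G_i^-)_{m-1}G_i^+\in\g^{\natural}$ acting semisimply; the terms $s\le m-2$, which are non-negative modes of elements of conformal weight $\ge2$ whose $\bar L$--components vanish in $\overline W_k$ and which therefore act on $:(G_i^+)^{p-1}:$ only through $V(\g^{\natural})\cap\overline W_k(2)$ in a way controlled by the primarity of $G_i^+$; and the residual term $(G_i^+)_{-1}\,e\,:(G_i^+)^{p-1}:$, absorbed by the inductive hypothesis after commuting $e^{p-1}$ past $(G_i^+)_{-1}$ and $(a_i)_0$. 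As everything collected is homogeneous of conformal weight $0$ and charge $0$, this yields a recursion $\Lambda_p=\nu_p(k)\,\Lambda_{p-1}$ with $\nu_p(k)$ an explicit expression in $c'$, the eigenvalue of $a_i$ on $G_i^+$, $p$ and $k$; iterating, $\Lambda_p=\prod_{j=1}^{p}\nu_j(k^{(1)}_{m,n})$. The same argument with $G_i^+$ and $G_i^-$ interchanged handles $:(G_i^-)^p:$.

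The main obstacle is exactly this last point: proving $c'\neq0$ and that \emph{no} factor $\nu_j(k^{(1)}_{m,n})$ vanishes --- i.e.\ that the recursion does not accidentally degenerate precisely at the conformal level $k^{(1)}_{m,n}$ --- and doing so \emph{without} closed operator product formulas, which are unavailable for non-minimal $W$--algebras. Two routes seem worth trying. (i) Pin down $\nu_p(k)$ explicitly as a polynomial in $p$ and $k$ using only the data one controls --- the $\lambda$--bracket $[L^{\g^{\natural}}{}_\lambda G_i^{\pm}]$, the vanishing $\bar L(j)G_i^{\pm}=0$ for $j\ge0$, the $\g^{\natural}$--module structure of $\mathrm{span}\{G_i^{\pm}\}$ from Theorem~\ref{41}, and the fact that the list of strong generators and their conformal weights in Theorem~\ref{41} confines the intermediate products $(G_i^-)_sG_i^+$ and the modes acting on $:(G_i^+)^{p-1}:$ to small homogeneous subspaces --- and then check $\nu_p(k^{(1)}_{m,n})\neq0$ by a direct computation in the spirit of the minimal case \cite{AKMPP-JA}. (ii) Alternatively, pass to $\mathrm{gr}\,W^k(\g,x,f_{m,n})\cong S(\widehat{\g^f})$ (Theorem~\ref{structure}): the symbol of $:(G_i^+)^p:$ is $\bar g^{\,p}$, where $\bar g=\sigma(G_i^+)$ is a polynomial generator distinct from $\sigma(\bar L)$; since $\bar L$ is a singular vector, $\mathrm{gr}\,I$ sits inside the differential--Poisson ideal generated by $\sigma(\bar L)$, and the vanishing $\bar L(j)G_i^{\pm}=0$ makes that ideal transverse to the $\bar g$--direction, so that $\bar g^{\,p}\notin\mathrm{gr}\,I$ and hence $:(G_i^+)^p:\ \notin I$ --- making ``transverse'' precise being where the real work would lie.
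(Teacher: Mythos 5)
Your reduction of the second assertion to the first (the $J(0)$--charge bookkeeping) is fine, and your observation that $\bar L(j)G_i^{\pm}=0$ for $j\ge 0$ at $k=k^{(1)}_{m,n}$ is correct and relevant. But the heart of the proposition, namely $:(G_i^{\pm})^p:\ \notin I$, is not actually proved in your proposal: you reduce it to the non-vanishing of the scalar $c'$ (the top coefficient of the $G^+G^-$ operator product) and of every recursion factor $\nu_j(k^{(1)}_{m,n})$, and you then state yourself that establishing these non-vanishings is ``the main obstacle,'' offering only two routes ``worth trying,'' neither of which is carried out. This is precisely the hard point: $c'$ and the $\nu_j$ are level-dependent constants that are not available in closed form for non-minimal hook type $W$--algebras, and in the minimal case the analogous structure constants do vanish at special levels, so a priori your recursion could degenerate exactly at the conformal level $k^{(1)}_{m,n}$; asserting that $c'\neq 0$ because it is ``the leading term of the operator product'' is not an argument (and if it did vanish, your method would be silent while the proposition could still be true). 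Your alternative route (ii) has an additional unaddressed issue: $\mathrm{gr}\,I$ is not obviously contained in the differential Poisson ideal generated by the symbol of $\bar L$, since symbols of elements of the form $X\cdot\bar L$ can drop in the filtration, so the ``transversality'' you appeal to is not merely a matter of making a word precise.

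For comparison, the paper's proof avoids all such constants: it computes directly that $L^{\g^{\natural}}(0)$ acts on $:(G_i^{\pm})^p:$ with eigenvalue $\tfrac{p(m+1)}{2}$ (the Sugawara value on the weight $p\omega_1$ plus the Heisenberg contribution of charge $\pm p$), which coincides with the $L(0)$--eigenvalue, deduces $\bar L(s):(G_i^{\pm})^p:\,=0$ for all $s\ge 0$, and then reruns the singular-vector argument of Theorem \ref{collnc}: since $\bar L$ is singular, $I$ is spanned by the explicit monomials $U_s$ applied to $\bar L$, and the charge and conformal-weight analysis, combined with the $\bar L$--primitivity just established, excludes $:(G_i^{\pm})^p:$ from $I$. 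So a complete proof along the paper's lines requires no knowledge of the products $(G_i^-)_sG_j^+$ at all, whereas your plan stands or falls with exactly the OPE data you acknowledge you cannot compute; as it stands, the proposal is a strategy with a genuine gap rather than a proof.
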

  \begin{proof}
 By direct calculation we have
\bea L^{\g^{\natural}} (0) :(G_{i} ^{\pm} )^p:  &=& \left(-\frac{(m+1) p ^2 }{2 n} + \frac{p (p+n) (m+1) }{2n} \right) :(G_{i} ^{\pm} )^p:  \nonumber \\
& =& \frac{p (m+1)}{2} :(G_{i} ^{\pm} )^p: .\nonumber 
\eea
This implies that for $s \ge 0$ we have
$$ \bar L(s) :(G_{i} ^{\pm} )^p: = 0. $$
%so  $ \bar M_m (0,0) = {\rm Vir}^{c=0}.:(G_{i} ^{\pm} )^p:$ is isomorphic to a certain quotient of the Verma module $M(0,0)$. 
Using 
%Proposition \ref{prop-fusion-trick} and
 the same arguments as in the proof of  Theorem \ref{collnc} we get $$:(G_{i} ^{\pm} )^p: \notin I =  W^k(\g,x, f_{m,n}) \bar L. $$
The claim follows.
  \end{proof}
 
 \begin{remark} {The situation for  $k =k_{m,n}^{(2)}$ is different. One can show that  for   $p \in {\Z}_{\ge 2}$ the following relation holds:
  $$ : (G_{i} ^{\pm} )^p :  = 0  \quad \mbox{in}  \ \   \overline W_{k_{m,n}^{(2)}}(\g, x, f_{m,n}).   $$
 This result won't be used elsewhere  in the paper.}
 \end{remark}

  \section{Decompositions of conformal embeddings}
  
Consider the embedding
  $$ \mathcal V(\g^{\natural}) =  V(\g^{\natural}). {\vac} \hookrightarrow  W_k(\g,x,  f). $$
  In the terminology of \cite{AKMPP} and \cite{AKMPP-JJM}, the conformal embedding $ \mathcal V(\g^{\natural})  \hookrightarrow W_k(\g, x, f)$ is called finite (resp. infinite) if each $W_k(\g,x, f)^{(i)}$ from \eqref{charge} is finite (resp. infinite) sum of $\mathcal V (\g^{\natural})$--modules.
    
 \subsection{Finite decompositions}  { Let $M(k)$ be the Heisenberg vertex algebras $V^k(\C J)$ (cf. \eqref{J}). Recall that 
 $V^{\beta_k}(\g^\natural)\subset W^k(\g,x,f)$ can be written as
 $$V^{\beta_k}(\g^\natural)=V^{k_1}(sl(n))\otimes M(k_0).$$
 Let $V_{sl(n)}(\mu)$ denote the finite-dimensional irreducible $sl(n)$-module of highest weight $\mu$. Let $L_k ^ {sl(n)} (\l)$ be the irreducible $V^k(sl(n))$-module with top component $V_{sl(n)}(\l)$. Identify $\C$  with  $(\C J)^*$ by 
$\l \mapsto \l(J)$,  and let $M(k,r)$ be the 
$M(k)$-module of highest weight $r$.
}
 We use the tensor product decomposition
\bea V_{sl(n)}(\omega_1) \otimes  V_{sl(n)}(\omega_{n-1} )
=  V_{sl(n)}(\omega_1 + \omega_{n-1}) \oplus  V_{sl(n)}(0).\label{tensor-1} \eea
Recall that $J(0)$ acts semi-simply on   $W_k(\g, x, f)$ and that 
 $$ W_k(\g, x, f_{m,n})= \bigoplus_{i \in {\Z}} W_k(\g, x, f_{m,n})^{(i)}, \quad  W_k(\g, x, f_{m,n})^{(i)}  = \{ v \in W_k(\g, x, f_{m,n}), \  J(0) v = i v  \}.$$

  \begin{lemma} \label{criterion}
 Assume that
 \item[(1)]   $\mathcal V(\g^{\natural})$ is conformally embedded in  $W_k(\g, x, f_{m,n})$ and the  level $k$ is not collapsing;
 \item[(2)] $W_k(\g, x, f)^{(0)}$ does not contain $\mathcal V (\g^{\natural})$--primitive vectors of  weight $\mu= \omega_1 + \omega_{n-1}$. 
 
 Then
  $W_k(\g,x,f_{m,n})^{(0)}$ is a simple vertex algebra isomorphic to $V(\g^{\natural})$  and each  \break $W_k(\g,x,  f_{m,n})^{(i)}$ is a  non-trivial simple $V(\g^{\natural})$--module.
  \end{lemma}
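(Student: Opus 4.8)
The plan is to exploit the conformal embedding together with the fact that $J(0)$ furnishes a $\ZZ$-grading of $W_k(\g,x,f_{m,n})$, and to translate "primitive vectors" into "singular vectors" for a suitable triangular decomposition. First I would recall that, by hypothesis (1) and Theorem \ref{MT}, $L=L^{\g^\natural}$ on $W_k(\g,x,f_{m,n})$, so every $\mathcal V(\g^\natural)$-submodule of $W_k(\g,x,f_{m,n})$ is automatically a conformal (i.e. $L(0)$-gradable) module in $KL$-type category, and the vertex algebra is completely reducible as a $\mathcal V(\g^\natural)$-module with finite multiplicities in each $L(0)$-eigenspace. Thus each graded piece $W_k(\g,x,f_{m,n})^{(i)}$ decomposes as a (locally finite) direct sum of highest-weight $\mathcal V(\g^\natural)=V^{k_1}(sl(n))\otimes M(k_0)$-modules. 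The $M(k_0)$-factor is a Heisenberg module determined by the $J(0)$-eigenvalue, and since $J$ is (a multiple of) $\varpi$, on the summand $W_k^{(i)}$ the Heisenberg charge is fixed; so the only data to control is the $sl(n)$-part.

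Next I would identify the $sl(n)$-weights that can occur as tops of summands inside $W_k^{(0)}$. From Theorem \ref{41}, $W_k(\g,x,f_{m,n})$ is strongly generated by $\g^\natural=gl(n)$, the Virasoro $L$, the $W_i$ (on which $gl(n)$ acts trivially), and the fields $G^\pm_i$ spanning $\C^n$ and $(\C^n)^*$; moreover $G^+_i$ has $J$-charge $+1$ and $G^-_i$ has $J$-charge $-1$. Hence $W_k^{(0)}$ is spanned by normally ordered products involving equal numbers of $G^+$'s and $G^-$'s, and as an $sl(n)$-module it is a quotient of (sums of tensor powers of) $\C^n\otimes(\C^n)^*$, together with the trivial pieces coming from $L$, the $W_i$ and $\g^\natural$ itself. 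Using the decomposition \eqref{tensor-1}, $\C^n\otimes(\C^n)^*=V_{sl(n)}(\omega_1+\omega_{n-1})\oplus V_{sl(n)}(0)$, and the representations that can appear at the top of a $\mathcal V(\g^\natural)$-submodule of $W_k^{(0)}$ are built out of $V_{sl(n)}(0)$ and $V_{sl(n)}(\omega_1+\omega_{n-1})$ (plus their "descendants" under repeated tensoring, whose new dominant weights are all $\geq \omega_1+\omega_{n-1}$ in the dominance order). The key point, which I expect to be the crux of the argument, is: any nontrivial $\mathcal V(\g^\natural)$-submodule of $W_k^{(0)}$ other than $\mathcal V(\g^\natural)\cdot\vac$ itself must contain a primitive vector (a vector killed by $a_n$, $n>0$, $a\in\g^\natural$, and by the positive modes of the Sugawara operators coming from $\mathcal V(\g^\natural)$) whose weight is dominant; the minimal-weight such primitive vector has $sl(n)$-weight in the set $\{0,\omega_1+\omega_{n-1},\dots\}$, and a weight-$0$ primitive vector other than $\vac$ would give a proper ideal of $W_k(\g,x,f_{m,n})$, contradicting simplicity — so its weight is $\geq \omega_1+\omega_{n-1}$, and by the structure of the generators the only candidate at the bottom is exactly $\mu=\omega_1+\omega_{n-1}$. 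Hypothesis (2) rules this out; therefore $W_k^{(0)}$ has no primitive vector besides $\vac$, forcing $W_k^{(0)}=\mathcal V(\g^\natural)\cdot\vac$, which is simple (it is the image of $V(\g^\natural)$ in a simple vertex algebra, and by Remark \ref{ker} its maximal ideal is the radical of the invariant form).

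Finally, for $i\neq 0$: each $W_k^{(i)}$ is a $W_k^{(0)}=\mathcal V(\g^\natural)$-module, nonzero because $:(G^+_i)^i:$ (for $i>0$) or $:(G^-_i)^{-i}:$ (for $i<0$) is a nonzero element of $W_k^{(i)}$ — this is the analogue, inside the simple quotient $W_k$, of Proposition \ref{75}; a short argument using $\bar L(s):(G^\pm)^p:=0$ for $s\geq 0$ and simplicity of $W_k$ gives nonvanishing. To see it is simple as a $\mathcal V(\g^\natural)$-module: any nonzero $\mathcal V(\g^\natural)$-submodule $N\subseteq W_k^{(i)}$, combined with $W_k^{(0)}$ and all the $W_k^{(j)}$, generates a nonzero ideal of $W_k(\g,x,f_{m,n})$ (one moves between graded pieces using the modes of $G^\pm$, which act transitively on the $J$-grading by general $W$-algebra structure), hence equals $W_k(\g,x,f_{m,n})$ by simplicity; intersecting back with $W_k^{(i)}$ gives $N=W_k^{(i)}$. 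Nontriviality (i.e. $W_k^{(i)}\not\cong\mathcal V(\g^\natural)$ as a module) is clear since the $J$-charge is $i\neq 0$ while $\mathcal V(\g^\natural)$ has $J$-charge $0$. The main obstacle is the middle step: rigorously showing that a primitive vector of $W_k^{(0)}$ of minimal positive $sl(n)$-weight must have weight exactly $\omega_1+\omega_{n-1}$ — this requires a careful bookkeeping of which $sl(n)$-modules arise in normally ordered monomials in the $G^\pm_i$ and an argument that the "new" dominant weights produced by iterated tensor products never drop below $\omega_1+\omega_{n-1}$, together with the observation that a weight-$0$ primitive vector $\neq\vac$ is impossible by simplicity of $W_k(\g,x,f_{m,n})$.
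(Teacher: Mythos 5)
Your plan stalls exactly where you yourself flag it, and that gap is not mere bookkeeping: hypothesis (2) only forbids primitive vectors of weight $\mu=\omega_1+\omega_{n-1}$ in the charge-zero component, whereas your route needs to exclude primitive vectors of \emph{every} nonzero dominant weight there. Normally ordered monomials with $p\ge 2$ factors $G^+$ and $p$ factors $G^-$ produce constituents of $(\C^n)^{\otimes p}\otimes\bigl((\C^n)^*\bigr)^{\otimes p}$ such as $V_{sl(n)}(2\omega_1+2\omega_{n-1})$ or $V_{sl(n)}(\omega_2+\omega_{n-2})$, and nothing in (2) rules out a primitive vector of such a weight; a dominance-order comparison with $\mu$ does not eliminate these candidates, so "the only candidate at the bottom is exactly $\mu$" is unjustified. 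The paper sidesteps this entirely by arguing multiplicatively: conformality gives that $W_k(\g,x,f_{m,n})$ is generated by $\mathcal V(\g^{\natural})+A^++A^-$ with $A^{\pm}=\mathcal V(\g^{\natural})\cdot\mbox{span}_{\C}\{G^{\pm}_i\}$, and hypothesis (2) combined with the \emph{single} tensor product decomposition \eqref{tensor-1} yields $A^+\cdot A^-\subset\mathcal V(\g^{\natural})$; since $W_k^{(0)}$ lies in products $A_1\cdots A_r$ with equally many $A^+$'s and $A^-$'s, induction on the number of factors gives $W_k^{(0)}=\mathcal V(\g^{\natural})$ (this is the argument of Theorem 6.2 of \cite{AKMPP-JJM}, which the paper invokes). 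Any repair of your approach amounts to reducing a putative primitive vector to one arising from a single product $A^+\cdot A^-$, i.e.\ to reproducing that argument.

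Two of your auxiliary justifications are also not valid as written. A weight-zero primitive vector other than $\vac$ does not "give a proper ideal contradicting simplicity" — by simplicity every nonzero vector generates the whole algebra as an ideal, so no contradiction arises that way; the correct reason such a vector cannot exist is that conformality forces its conformal weight to equal its Sugawara weight, which is $0$, hence it lies in $W_k(\g,x,f_{m,n})(0)=\C\vac$. Likewise, simplicity of $W_k^{(0)}$ does not follow from its being "the image of $V(\g^{\natural})$ in a simple vertex algebra" (images need not be simple), and your step "intersecting the ideal generated by $N$ back with $W_k^{(i)}$ gives $N$" fails, since charge-zero combinations of modes applied to $N$ need not land in $N$. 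Both simplicity claims come from the standard fact that for a simple vertex algebra with semisimple $J(0)$-action the charge-zero component is a simple vertex algebra and each charge component is a simple module over it; once $W_k^{(0)}=\mathcal V(\g^{\natural})$ is established, this yields simplicity of $\mathcal V(\g^{\natural})$ and of each $W_k^{(i)}$, exactly as in the paper.
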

  \begin{proof}
  The proof is the same as the proof of  \cite[Theorem 6.2]{AKMPP-JJM}. 
    Since $\mathcal V(\g^{\natural})$ is conformally embedded into $W_k(\g, x,  f_{m,n})$ we have that $W_k(\g,x,  f_{m,n})$ is generated by $ \mathcal V(\g^{\natural})  + A^+ + A^-$ (see \eqref{pma} for notation).
 The assumption of the lemma and the tensor product decomposition (\ref{tensor-1}) give that
  \begin{equation}\label{fusion}
A^{+ } \cdot A^{-} \subset \mathcal V(\g^{\natural}).
  \end{equation}
  Since 
   $$ W_k(\g,x,  f_{m,n})^{(0)} \subset A_1 \cdots  A_r$$
with $A_i =A^{\pm}$ or $A= \mathcal V^{k^{\natural}}$ and
   $$\sharp \{ i   \ \vert \ A_i = A^+ \} = \sharp \{ i   \ \vert \ A_i = A^- \},
   $$
   by \eqref{fusion} we conclude that  $W_k(\g, x, f_{m,n})^{(0)} =\mathcal V(\g^{\natural})$. Since $W_k(\g, x, f_{m,n})^{(0)}$ is a simple vertex algebra, we have that $\mathcal V(\g^{\natural}) =   V(\g^{\natural})$ is simple, and each 
    $W_k(\g,x,f_{m,n})^{(i)}$ is a simple $V(\g^{\natural})$--module.
  \end{proof}
  
 %   In this section we decompose conformal embeddings in the non-collapsing cases $k  =k_{m,n} ^{(1)}$ and $k= k_{m,n} ^{(2)}$.
  
%Consider first case $k=k_{m,n} ^{(1)}$. We can  get the decomposition provided that $\frac{m+1}{n-1} \notin {\Z}$.
% {\color{blue} and that $G^{\pm} _i$ are non-zero.}

Using the tensor product decomposition \eqref{tensor-1}, we get that any $V(\g ^{\natural})$--primitive vector of $sl(n)$--weight $\mu= \omega_1 + \omega_{n-1}$ has  conformal weight
\begin{itemize}
\item $h_{\mu} ^{(1)} = m+1 + \frac{m+1}{n-1}$ if $  k  =k_{m,n} ^{(1)}$,
\item $h_{\mu} ^{(2)} = m - \frac{m}{n+1}$ if $k  =k_{m,n} ^{(2)}$.
\end{itemize}

\begin{theorem} \label{decomp}     
% ( {\color{blue} Assuming that  Lemma \ref{non-zero-2} holds}).
 Let $k=k_{m,n}^{(i)}$ for  $i=1,2$  and assume  that     $k$ is non-collapsing.   Assume also that $\frac{m+1}{n-1} \notin {\Z}$  if  $i=1$ and $ \frac{m}{n+1}  \notin {\Z}$ if $i=2$. Then
 
\bea \label{deccc} W_k = W_k(\g, x,  f_{m,n}) = \bigoplus_{i \in {\Z}}  W_k ^{(i)}, \label{dec-nat} \eea
and each $W_k ^{(i)} = \{ v \in  W_k \ \vert \ J(0) v = i v \}$ is an irreducible $V(\g^{\natural})$--module. { 
The summands in the r.h.s. of  	\eqref{deccc} have the form:
\begin{itemize}
\item $ W_k ^{(i)}= L_{k_1} ^{sl(n)} ( i \omega_1) \otimes M(k_0, i)$ if $i \ge 0$,  
\item $ W_k ^{(i)}= L_{k_1} ^{sl(n)} ( -i \omega_{n-1}) \otimes M (k_0, i)$ if $i <0$.
\end{itemize}
In particular, $\mathcal V(\g^{\natural}) \cong W_k(\g, x, f_{m,n})^{(0)} = V(sl(n)) \otimes V^{k_0}(\C J)$ is  a simple vertex algebra  which is conformally embedded in $W_k(\g, x, f_{m,n})$.
}
\end{theorem}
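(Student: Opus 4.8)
The plan is to obtain the theorem from Lemma \ref{criterion}. Its hypothesis (1) is exactly what we assume ($\mathcal V(\g^\natural)\hookrightarrow W_k$ conformal and $k$ non-collapsing), so everything comes down to verifying hypothesis (2): that $W_k^{(0)}$ contains no $\mathcal V(\g^\natural)$-primitive vector of $sl(n)$-weight $\mu=\omega_1+\omega_{n-1}$. I would establish this by a parity-of-conformal-weight obstruction. Such a primitive vector $v$ lies in $W_k^{(0)}$, hence has $J$-charge $0$; since the embedding is conformal we have $L=L^{\g^\natural}$, so the $L(0)$-eigenvalue of $v$ is the Sugawara eigenvalue attached to the weight $(\mu,0)$ of $\g^\natural\cong sl(n)\oplus\C J$, namely $h_\mu^{(1)}=m+1+\tfrac{m+1}{n-1}$ if $i=1$ and $h_\mu^{(2)}=m-\tfrac{m}{n+1}$ if $i=2$ (the computation recorded just before the theorem). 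On the other hand, by Theorem \ref{41} the only strong generators of $W^k(\g,x,f_{m,n})$ of non-integral conformal weight are the $G^\pm_j$, of weight $\tfrac{m+1}{2}$ and $J$-charge $\pm1$; hence any normally ordered monomial of $J$-charge $0$ involves an even total number of them and has integral conformal weight, so every homogeneous vector of $W_k^{(0)}$ has integral conformal weight. But the integrality hypotheses say precisely that $h_\mu^{(1)}$ (when $i=1$), respectively $h_\mu^{(2)}$ (when $i=2$), is not an integer, a contradiction. This proves (2).

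Granted (2), Lemma \ref{criterion} yields at once that $W_k^{(0)}=\mathcal V(\g^\natural)$ is a simple vertex algebra isomorphic to $V(\g^\natural)=V^{k_1}(sl(n))\otimes V^{k_0}(\C J)$ --- which already proves the final sentence of the theorem --- that $A^+A^-\subseteq\mathcal V(\g^\natural)$, and that each nonzero $W_k^{(i)}$ is a simple $V(\g^\natural)$-module; together with the semisimplicity of $J(0)$ this gives the decomposition \eqref{dec-nat}. For $i=\pm1$ the identification is immediate: non-collapsing forces $W_k\ne\mathcal V(\g^\natural)$, hence $A^+\ne0$ (see \eqref{pma}), and therefore $W_k^{(1)}=A^+$ since $A^+$ is a nonzero submodule of the simple module $W_k^{(1)}$; as the top $\mathrm{span}_\C\{G^+_j\}$ of $A^+$ is, by Theorem \ref{41}, $\mathcal V(\g^\natural)$-primary, isomorphic to $V_{sl(n)}(\omega_1)$, and of $J$-charge $1$, we get $W_k^{(1)}\cong L^{sl(n)}_{k_1}(\omega_1)\otimes M(k_0,1)$, and symmetrically $W_k^{(-1)}\cong L^{sl(n)}_{k_1}(\omega_{n-1})\otimes M(k_0,-1)$, the conjugate-linear involution $\phi$ exchanging the two cases.

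For general $i$ I would argue as in the proof of \cite[Theorem 6.2]{AKMPP-JJM}: the relation $A^+A^-\subseteq\mathcal V(\g^\natural)$, obtained in the proof of Lemma \ref{criterion} via the decomposition \eqref{tensor-1}, shows that $W_k^{(i)}$ ($i\ge0$) is generated over $\mathcal V(\g^\natural)$ by the $i$-fold normally ordered products of the $G^+_j$, and, being a simple $V(\g^\natural)$-module on which $J(0)$ acts by $i$, its Heisenberg factor is $M(k_0,i)$ while its $sl(n)$-factor is the simple $V^{k_1}(sl(n))$-module whose top is the $sl(n)$-constituent of lowest conformal weight occurring in it; a conformal-weight count of those products identifies this constituent as $V_{sl(n)}(i\omega_1)$, so $W_k^{(i)}\cong L^{sl(n)}_{k_1}(i\omega_1)\otimes M(k_0,i)$, with $i<0$ handled by $\phi$. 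The main obstacle here is showing that $W_k^{(i)}\ne 0$ for all $i$: since the $\phi$-invariant Hermitian form is non-degenerate on the simple quotient $W_k$ and pairs $W_k^{(i)}$ with $W_k^{(-i)}$, it is enough to treat $i>0$; for $k=k_{m,n}^{(1)}$ this is Proposition \ref{75}, but for $k=k_{m,n}^{(2)}$ the natural candidates $:(G^+_j)^p:$ vanish, and one must instead show that $:G^+_iG^+_j:\ne0$ for $i\ne j$ (and more generally that the $i$-fold products survive), arguing as in the proof of Theorem \ref{collnc} that $\bar L$ is singular, so $I$ is a homomorphic image of a Verma module, and ruling these vectors out of $I$ by a weight estimate. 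Finally, when $k$ is admissible and $n>2$ one checks that the integrality hypotheses hold automatically and that $k$ is non-collapsing by Theorem \ref{non-collapsing-admissible}, so the decomposition holds in that range.
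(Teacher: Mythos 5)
Your main line is the paper's own: you verify hypothesis (2) of Lemma \ref{criterion} by noting that a $\mathcal V(\g^\natural)$--primitive vector of weight $\mu=\omega_1+\omega_{n-1}$ in the charge-zero sector would have conformal weight $h^{(i)}_\mu\notin\Z$, while every vector of $J$-charge zero has integral conformal weight (the $G^\pm_j$ must occur in pairs), and you then invoke Lemma \ref{criterion}; this is exactly how the paper argues, your integrality remark simply making explicit what the paper leaves implicit. The identification of the summands through $A^\pm$ and their tops is likewise consistent with what the paper intends via Lemma \ref{criterion} and the method of \cite{AKMPP-JJM}.

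Where the proposal goes astray is in what you call the main obstacle, the non-vanishing of the $W_k^{(i)}$. Proposition \ref{75} is a statement about $\overline W_k(\g,x,f_{m,n})$, which surjects onto the simple quotient $W_k$; non-vanishing of $:(G_i^{\pm})^p:$ upstairs gives no information downstairs, so it cannot be used as you propose for $k=k^{(1)}_{m,n}$, and for $k=k^{(2)}_{m,n}$ you are left with an unproved claim (that $:G^+_iG^+_j:\neq 0$ and, more generally, that the $i$-fold products survive) whose suggested Verma-module weight estimate again lives in $\overline W_k$ rather than in $W_k$. In fact no such argument is needed: in the simple vertex algebra $W_k$ one has $Y(u,z)v\neq 0$ for all nonzero $u,v$, so non-collapsing (which gives $A^+\neq 0$, hence also $A^-\neq 0$ via $\phi$ or the invariant Hermitian form) immediately forces every charge component $W_k^{(i)}$ to be nonzero. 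This is precisely the use of simplicity in the first sentence of the paper's proof (namely $A^+\cdot A^-\neq\{0\}$ in $W_k$), and the non-triviality of each $W_k^{(i)}$ is already part of the conclusion of Lemma \ref{criterion}. With that substitution your argument closes and coincides with the paper's proof.
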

\begin{proof}
 Assume that $k$ is non-collapsing. Then by the simplicity of   $W_k$   we conclude that $ A^+ \cdot A^-  \ne \{ 0\}$ in $W_k$. 
This implies    that $W_k^{(0)}$ has a primitive vector $v_{\nu}$  of $\g^{\natural}$--weight $\nu \in  \{ 0, \mu\}$.   Since  $h^{(i)}_{\mu} \notin {\Z}$, we conclude that $W_k ^{(0)}$ can not contain primitive vector $v_{\mu}$.
  Now the  claim follows from Lemma \ref{criterion}. %This proves the  assertion.

\end{proof}

\begin{remark} \label{rem-non-coll-1}In Theorem  \ref{non-collapsing-admissible} we shall prove that  $k^{(i)}_{m,n},\,i=1,2,$ are non-collapsing whenever they are  admissible.  But we expect that the level  $k=k_{m,n}^{(1)}$ is also  non-collapsing for non-admissible level. Since we cannot prove this, let us contemplate  what could happen if $k$ is collapsing and $\frac{m+1}{n-1} \notin {\Z}$.  Then:
\begin{itemize}
\item  
$:(G _i ^{\pm}) ^p:  \ne 0 \quad \mbox{in} \  \overline W_k(\g, x,f_{m,n})$, for each $p \in {\Z}_{\ge 0}$ (cf. Proposition \ref{75});
\item $A^+ \cdot A^- =\{0\}$.
\end{itemize}
This gives that
$  \Omega_{ \pm p} =:(G _i ^{\pm}) ^p:$
is a non-zero singular vector in $\overline W_k(\g, x,f_{m,n})$. In particular, 
%\begin{itemize}
\begin{itemize}
\item $\overline W_k(\g, x,f_{m,n})$ has infinite-length as $W^k(\g, x,f_{m,n})$--module.
%\end{itemize}
\item[]Since $ W^k(\g, x,f_{m,n}) =H_f (V^k(\g))$, we conclude that:
%\begin{itemize}
\item $V^k(\g)$ is a module in $KL^k$ of infinite length.
\end{itemize}
 So assuming  that $k=k_{m,n} ^{(1)}$ is collapsing would imply the existence of highest weight modules in $KL^k$ of infinite length. It is expected that this is not possible for non-critical levels. On  the other hand, if $k=k_{m,n}^{(2)}$ is non-admissible, we will see in Proposition  \ref{collapsing-3p2}  that $k$ can be collapsing.
\end{remark}

\begin{remark}\label{AAAA}   Note that the levels   $k^{\natural} = -n + \frac{n-1}{m+1} = -n + \frac{n-1}{q}$, for   $q=m+1$, appeared in 
 \cite[Remark 8.9]{AEM}. The authors  conjectured that these levels are collapsing  if $n= p q$ and that 
 $W_{k^{\natural}} (sl(n), f_1) = V_{-1} (sl(p))$ for certain nilpotent element $f_1$. In Section \ref{rectanguar} we prove that their conjecture is true except possibly for $p=2$.
\end{remark}

\subsection{Infinite decompositions: the case $n=2$}
 We shall see that if the conformal level does not satisfy the condition of Theorem \ref{decomp}, then   we can expect that the decomposition is infinite.  
 
 Let us consider here   the case $m = p-1$, $n =2$.  Then $k=k_{p-1,2} ^{(1)}$  and $k_1= -2 + \frac{1}{p}$. Clearly, the conditions of Theorem \ref{decomp} are not satisfied. But according to Theorem  \ref{MT} we have conformal embedding
 $$ V(\g^{\natural}) = V_{-2+ \frac{1}{p}} ( sl(2)) \otimes M(k_0)
 \hookrightarrow W_k(\g, x, f_{p-1,2}).$$

  In the analysis of this conformal embedding, an  important role is played by  the vertex algebras $\mathcal V^{(p)}$ and $\mathcal R^{(p)}$ introduced in \cite{A-TG}.
Their properties are further studied in \cite{ACGY}.
  The vertex algebra  $\mathcal V^{(p)}$ is realized as an extension of $V_{-2+ \frac{1}{p}} (sl(2))$. It has ${\Z}$--gradation so that
  \begin{align*}\mathcal V^{(p)} &= \bigoplus_{\ell \in {\Z}} \mathcal V^{(p)} _{\ell},\\
   \mathcal V^{(p)} _{\ell} &= \bigoplus_{s=0} ^{\infty} L^{sl(2)} _{-2+1/p} ( (\vert \ell \vert + 2 s) \omega_1).\end{align*}
   $\mathcal V^{(p)}$ is generated by the generators $e,f,h$ of  $V_{-2+1/p}(sl(2))$ and four primary fields $\tau^{\pm}, \overline{\tau} ^{\pm}$ of conformal weight $\frac{3p}{4}$. In the case $p=2$, $\mathcal V^{(p)}$ is isomorphic to the small  $N=4$ superconformal vertex algebra with central charge $c=-9$.
  
  For each $q \in \tfrac{1}{2} {\Z}$, let $F_q$ denote the (generalized) lattice vertex algebra $V_{ \Z \varphi} =M_{\varphi} (1) \otimes {\C}[\Z \varphi]$ such that $\langle \varphi, \varphi \rangle = q$. Here $M_{\varphi} (1)$ is the Heisenberg vertex algebra  of level $q$ generated by the field $\varphi(z) = \sum_{n \in {\Z}} \varphi(n) z^{-n-1}$  and ${\C}[\Z \varphi]$ is the group algebra of the lattice ${\Z} \varphi$. Let $M_{\varphi} (1, r)$ denotes the irreducible $M_{\varphi} (1)$--module on which  $\varphi(0)$ acts as $r \cdot \mbox{Id}$. Then
  $$ F_q = \bigoplus _{\ell \in {\Z}} F_{q}  ^{(\ell)}, \quad  F_{q} ^{(\ell)} \cong M_{\varphi} (1, q\ell).$$
  Finally, recall that the vertex algebra $\mathcal R^{(p)}$ is realized as the following subalgebra of $\mathcal V^{(p)} \otimes F_{-\frac{p}{2}}$:
  $$ \mathcal R^{(p)} = \bigoplus _{\ell \in {\Z}}\mathcal  V^{(p)} _{\ell} \otimes F_{-\tfrac{p}{2}} ^{(\ell)} = \bigoplus _{\ell \in {\Z}} \mathcal V^{(p)} _{\ell} \otimes M_{\varphi}(1, -\ell \tfrac{p}{2}). $$
    The vertex algebra $\mathcal R^{(p)}$ admits the decomposition
  $$ \mathcal R^{(p)} = \bigoplus  \mathcal R^{(p)}_{(\ell)}, $$
  such that each  $\mathcal R^{(p)}_{(\ell)}= \mathcal V^{(p)} _{\ell} \otimes M_{\varphi}(1, -\ell \tfrac{p}{2})$ is a direct sum of infinitely many non-isomorphic  $V_{-2+ \frac{1}{p}} (sl(2)) \otimes M_{\varphi} (1)$--modules.
  The next theorem is proved in \cite[Theorem 10]{ACGY}:
  %  by using the tensor category theory in $KL_{-2+1/p}$.
  
  \begin{theorem} \label{rp} \cite{ACGY} Let $k=k_{p-1,2} ^{(1)} = -\frac{p^2-1}{p}$. Then
 $W_k(sl(p+1), f_{p-1,2}) =  \mathcal R^{(p)}$. 
In particular, level $k$ is not collapsing.
\end{theorem}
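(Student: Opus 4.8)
The plan is to construct an explicit isomorphism of vertex algebras $W_k(sl(p+1),f_{p-1,2})\cong\mathcal R^{(p)}$; the non-collapsing assertion is then immediate, because inside $\mathcal R^{(p)}$ the image of $V^{\beta_k}(\g^\natural)$ is the subalgebra generated by the $sl(2)$-currents and the Heisenberg field $\varphi$, namely $V_{-2+1/p}(sl(2))\otimes M_\varphi(1)\subseteq\mathcal R^{(p)}_{(0)}$, which is proper since $\mathcal R^{(p)}_{(\pm1)}\neq0$ and $\mathcal V^{(p)}_0\supsetneq V_{-2+1/p}(sl(2))$. (Alternatively, that $k^{(1)}_{p-1,2}$ is non-collapsing already follows from it being admissible, together with Theorem~\ref{non-collapsing-admissible} and Proposition~\ref{75}.) Throughout I would use that $k=k^{(1)}_{p-1,2}=-(p+1)+\tfrac{p+1}{p}$ is a boundary admissible level for $\widehat{sl}(p+1)$: by Arakawa's theorems, $H_{f_{p-1,2}}$ sends the simple affine vertex algebra $V_k(sl(p+1))$ onto $W_k(sl(p+1),f_{p-1,2})$ (nonzero since $f_{p-1,2}\in\overline{\mathbb O_{(p,1)}}=X_{V_k(sl(p+1))}$), and the Euler--Poincaré principle computes the refined ($J(0)$- and $sl(2)$-graded) character of $W_k(sl(p+1),f_{p-1,2})$ out of the Kac--Wakimoto character of $V_k(sl(p+1))$.

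First I would fix compatible strong generators. By Theorems~\ref{41} and \ref{structure}, $W^k(sl(p+1),f_{p-1,2})$ is freely generated by the currents $J^{\{a\}}$ ($a\in gl(2)$), the Virasoro vector $L$, the fields $W_i$ ($3\le i\le p-1$), and the four fields $G^{\pm}_1,G^{\pm}_2$ of conformal weight $\tfrac p2$, with $\{G^{+}_1,G^{+}_2\}\cong\C^2$, $\{G^{-}_1,G^{-}_2\}\cong(\C^2)^*$ as $sl(2)$-modules and with $J(0)$-eigenvalues $\pm1$. On the target, recall from \cite{A-TG} that $\mathcal R^{(p)}\subseteq\mathcal V^{(p)}\otimes F_{-p/2}$, that $\mathcal V^{(p)}$ is generated by $V_{-2+1/p}(sl(2))$ and the weight-$\tfrac{3p}4$ primaries $\tau^{\pm},\bar\tau^{\pm}$, and that the four fields $\tau^{\pm}\otimes v_{\mp}$, $\bar\tau^{\pm}\otimes v_{\mp}$ (with $v_{\mp}$ a lowest weight vector of $M_\varphi(1,\mp\tfrac p2)$) lie in $\mathcal R^{(p)}_{(\pm1)}$, have conformal weight $\tfrac{3p}4-\tfrac p4=\tfrac p2$, span a copy of $\C^2\oplus(\C^2)^*$, and together with $V_{-2+1/p}(sl(2))$ and $\varphi$ strongly generate $\mathcal R^{(p)}$. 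I would then define a linear map $\Phi$ on the generating space of $W^k(sl(p+1),f_{p-1,2})$: send the $sl(2)$-part of $gl(2)$ to $V_{-2+1/p}(sl(2))$, the central current $J$ to a rescaling of $\varphi$ (the rescaling forced by matching $J(0)$- with $\varphi(0)$-eigenvalues, one checking that the resulting Heisenberg level agrees with $k_0$), $L$ to the Sugawara vector of $V_{-2+1/p}(sl(2))\otimes M_\varphi(1)$, each $W_i$ to the normally ordered expression in the $gl(2)$-currents forced by the fact that $\mathcal R^{(p)}$ carries no other $gl(2)$-invariant, $J(0)$-neutral state in weight $i\le p-1$, and $(G^{+}_1,G^{+}_2,G^{-}_1,G^{-}_2)$ to $(\tau^{+}\otimes v_{-},\bar\tau^{+}\otimes v_{-},\tau^{-}\otimes v_{+},\bar\tau^{-}\otimes v_{+})$ up to normalization.

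The heart of the argument is to verify that $\Phi$ preserves all $\lambda$-brackets, so that by free generation it extends to a vertex algebra homomorphism $W^k(sl(p+1),f_{p-1,2})\to\mathcal R^{(p)}$. The brackets involving $L$ and affine $gl(2)$ are forced by the weights, primarity and levels recorded in Theorem~\ref{41}; the brackets $[W_i{}_\lambda W_j]$, $[W_i{}_\lambda G^{\pm}]$ are then pinned down once one knows that $\mathcal V^{(p)}_0$ contains no $sl(2)$-invariant of conformal weight in $\{3,\dots,p-1\}$ other than those in $V_{-2+1/p}(sl(2))$ (true, since the next summand $L^{sl(2)}_{-2+1/p}(2\omega_1)\subset\mathcal V^{(p)}_0$ has top weight $2p>p-1$). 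The genuinely hard input is the OPE $[G^{+}_a{}_\lambda G^{-}_b]$, which by weight and $sl(2)$-covariance has the shape $\lambda^{p-1}c\,\delta_{ab}\vac+\dots+\lambda^0\cdot(\text{weight }p-1)$, with leading constant and lower terms (in $J^{\{\cdot\}}$, $L$, the $W_i$, and normally ordered products) computable in principle from the Kac--Wakimoto presentation of $W^k(sl(p+1),f_{p-1,2})$ and to be matched with the corresponding OPE of $\tau^{\pm}\otimes v_{\mp},\bar\tau^{\pm}\otimes v_{\mp}$ in $\mathcal R^{(p)}$. I expect this OPE comparison to be the main obstacle, since explicit structure constants for hook-type $W$-algebras of rank $p-1$ are not available for general $p$; a way to bypass it is to realize $W^k(sl(p+1),f_{p-1,2})$ via inverse quantum Hamiltonian reduction inside a tensor product of the principal $W$-algebra $\mathcal W^k(sl(p+1))$ (which at this boundary admissible level is tractable) with an explicit free-field algebra built from $\beta\gamma$-systems and lattice vertex algebras, and to identify that realization directly with the construction of $\mathcal R^{(p)}$ in \cite{A-TG}.

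Finally, once $\Phi$ is a vertex algebra homomorphism it is surjective, its image containing the strong generators $V_{-2+1/p}(sl(2))$, $\varphi$ and $\tau^{\pm}\otimes v_{\mp},\bar\tau^{\pm}\otimes v_{\mp}$ of $\mathcal R^{(p)}$; since $W^k(sl(p+1),f_{p-1,2})$ has a unique simple graded quotient $W_k(sl(p+1),f_{p-1,2})$ (as $k\neq-h^\vee$), this produces a surjection $\mathcal R^{(p)}\twoheadrightarrow W_k(sl(p+1),f_{p-1,2})$. To see it is an isomorphism one invokes either simplicity of $\mathcal R^{(p)}$, or---without it---the equality of refined characters: the character of $W_k(sl(p+1),f_{p-1,2})$ obtained above by Euler--Poincaré coincides with that of $\mathcal R^{(p)}=\bigoplus_{\ell\in\Z}\big(\bigoplus_{s\ge0}L^{sl(2)}_{-2+1/p}((|\ell|+2s)\omega_1)\big)\otimes M_\varphi(1,-\ell\tfrac p2)$ computed from the known admissible-level $\widehat{sl}(2)$ characters, whence the surjection has no kernel and $W_k(sl(p+1),f_{p-1,2})\cong\mathcal R^{(p)}$.
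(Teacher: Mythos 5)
First, note that the paper does not prove this statement at all: it is imported verbatim from \cite[Theorem 10]{ACGY} (``The next theorem is proved in \cite[Theorem 10]{ACGY}''), so there is no internal proof to compare your route against; any proof here must genuinely reconstruct the argument of \cite{ACGY}.

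Measured against that requirement, your proposal has a real gap at its centre. The entire content of the theorem is the identification $W_k(sl(p+1),f_{p-1,2})\cong\mathcal R^{(p)}$, and the two steps you offer for it are both left unexecuted: (i) the verification that your map $\Phi$ on strong generators preserves $\lambda$-brackets hinges on the OPE $[G^{+}_a{}_\lambda G^{-}_b]$, whose structure constants for the hook $W$-algebra at general $p$ you yourself acknowledge are unavailable, and which you only describe as ``computable in principle''; (ii) the proposed bypass --- realizing $W^k(sl(p+1),f_{p-1,2})$ by inverse quantum Hamiltonian reduction and identifying that realization with the construction of $\mathcal R^{(p)}$ in \cite{A-TG} --- is stated as a plan, not carried out, and this identification is precisely the hard content of \cite{ACGY}. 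Likewise, the final injectivity step rests on an asserted equality of refined characters (Euler--Poincar\'e character of $H_{f_{p-1,2}}(V_k(sl(p+1)))$ versus the character of $\bigoplus_{\ell}\mathcal V^{(p)}_{\ell}\otimes M_{\varphi}(1,-\ell p/2)$ computed from admissible $\widehat{sl}(2)$ characters), a nontrivial identity that is nowhere verified. So what you have is a plausible strategy outline with correct framing (boundary admissibility of $k$, $f_{p-1,2}\in\overline{\mathbb O}_k$, exactness of $H_f$, matching of conformal weights and $J(0)$-charges of the four weight-$\tfrac p2$ fields with $\tau^{\pm}\otimes v_{\mp},\bar\tau^{\pm}\otimes v_{\mp}$), but not a proof. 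The one piece that does stand on its own is the ``in particular'' clause: granting the isomorphism, properness of $V_{-2+1/p}(sl(2))\otimes M_\varphi(1)$ inside $\mathcal R^{(p)}$ gives non-collapsing, and, independently of the isomorphism, non-collapsing for this admissible level also follows from Theorem \ref{collnc} together with Theorem \ref{non-collapsing-admissible}(1)--(2), as you observe.
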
 
  As a consequence, the decomposition of conformal embedding is infinite in this case.
   Some important special cases were considered in our previous papers:
  \begin{itemize}
  \item the case $p=2$ was treated in \cite{A-TG}, when it was proved that $W_k(\g,x, f)  =V_{-3/2}( sl(3))  = \mathcal R^{(2)}$. This result is used in \cite{AKMPP} to show that the decomposition of the conformal embedding $gl(2) \hookrightarrow sl(3)$ is infinite for $k=-3/2$;
  \item the case $p=3$ was treated in \cite{AKMPP-JJM}, where it was proved that $W_k(\g,x,f) = W_k(\g,x, f_\theta) = \mathcal R^{(3)}$.
\end{itemize}

\subsection{ More collapsing levels} Let $k=k_{m,n} ^{(2)}$ for $m = 3p$ and $n=2$. We see that again  the conditions of Theorem \ref{decomp} are not satisfied. Then  we have:
\begin{itemize}
\item $k_1 = -2 + \frac{1}{p}$.
\item The embedding
$$ V (\g^{\natural}) = V_{-2+ \frac{1}{p}} ( sl(2)) \otimes M(k_0)
 \hookrightarrow \overline W_k(\g,x,f_{3p,2})$$
 is conformal.
 %\item But $k=k_{3p,2} ^{(2)}$  is also collapsing level because of Corollary \ref{coll-2}.
 %\item $W_k(\g, f)$ is a simple vertex algebra generated by $V^{k^{\natural}}(\g^{\natural})$ and  primary fields $G^{\pm} _i$, $i=1,2$ of conformal weight $\frac{3p+1}{2}$.

%\item {\color{blue}D:  I believe that $k=k_{3p,2} ^{(2)}$ is an example of collapsing level which is not strongly collapsing.}
\end{itemize}
%\begin{comment}
\begin{proposition} \label{collapsing-3p2} Level $k= k_{3p,2}^{(2)} = -3p-1 + \frac{1}{p}$ is collapsing and
$$W_k(sl(3p+2), x, f_{3p,2}) = V_{-2+\frac{1}{p}} (sl(2)) \otimes M(k_0). $$
\end{proposition}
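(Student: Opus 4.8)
The plan is to apply Theorem~\ref{Criterion2} in the strong form, showing that the level $k=k_{3p,2}^{(2)}$ is in fact \emph{strongly} collapsing, from which Proposition~\ref{collapsing-3p2} follows together with the structural description of $W^k(\g,x,f_{3p,2})$ provided by Theorem~\ref{41}. First I would record that for $m=3p$, $n=2$ the datum is the hook $(3p,1^2)$ with $\g^\natural\cong gl(2)$, so by Theorem~\ref{41} the strong generators are the $gl(2)$--currents $J^{\{a\}}$, the Virasoro field $L$, the fields $W_i$ of weight $i=3,\dots,3p$ on which $gl(2)$ acts trivially, and the fields $G^\pm_i$, $i=1,2$, of conformal weight $\tfrac{m+1}{2}=\tfrac{3p+1}{2}$ which are primary for $L$ and for $V(\g^\natural)$, spanning $\C^2$ and $(\C^2)^*$ as $sl(2)$--modules. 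Taking $S=\mathrm{span}(\{W_i\mid 3\le i\le m-1\}\cup\{G^\pm_i\mid i=1,2\})$ as in \eqref{S}, the hypotheses of Theorem~\ref{Criterion} are satisfied (for $p\ge1$ one has $\tfrac{m+1}{2}=\tfrac{3p+1}{2}\ne 2$, so $S\cap W(2)=\emptyset$), and $k$ is conformal by Theorem~\ref{MT}(2), so $\bar L=L-L^{\g^\natural}$ generates a proper ideal $I$ and we may form $\overline W_k=W^k(\g,x,f_{3p,2})/I$.

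Next I would compute the numbers $C_i$ of \eqref{Cj} for the irreducible $\g^\natural$--constituents of $S$. As in the proof of Theorem~\ref{coll}, the $W_i$ give trivial $\g^\natural$--constituents with $C_i=0$, while by \eqref{actionvarpi} the spans of the $G^+_i$ and of the $G^-_i$ are, as $(\C\varpi\oplus sl(2))$--modules, $\C_{\frac{n+m}{m}}\otimes\C^2$ and $\C_{-\frac{n+m}{m}}\otimes(\C^2)^*$ respectively, with $\tfrac{n+m}{m}=\tfrac{3p+2}{3p}$. Using the level data of Theorem~\ref{MT} for $k=k_{m,n}^{(2)}$, namely $k_0=k+\tfrac{(m-1)(m+n)}{m}$ and $k_1=k+m-1$ (so $k_1=-2+\tfrac1p$ for $m=3p$, $n=2$), together with $\langle\varpi,\varpi\rangle_0=(\tfrac nm)^2+1$, I would evaluate
\[
C_1=\cdots=C_{m-2}=0,\qquad C_{m-1}=C_m=(1-\delta_{k_0,0})\frac{m+n}{2mnk_0}+\frac{n^2-1}{2n(k_1+n)}.
\]
A direct computation specializing $m=3p$, $n=2$ should give $C_{m-1}=C_m$ strictly different from the conformal weight $\D_{m-1}=\D_m=\tfrac{m+1}{2}=\tfrac{3p+1}{2}$ of the $G^\pm_i$ (and of course $0=C_i\ne\D_i=i$ for the $W_i$ with $i\ge 3$). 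Hence $C_i\ne\D_i$ for all $i\in\mathcal J$, and Theorem~\ref{Criterion2} forces $\mathcal K=\emptyset$, i.e. $\overline W_k\cong\widetilde V(\g^\natural)$, so $k$ is strongly collapsing and in particular collapsing. Finally, to identify the algebra explicitly: since $k_1=-2+\tfrac1p$ is a nonzero level and $V^{k_1}(sl(2))$ has a unique simple quotient, and $k_0\ne0$ (which I would check directly for $m=3p$, $n=2$, so the Heisenberg factor $M(k_0)$ survives as $V^{k_0}(\C\varpi)$), the collapse gives $W_k(sl(3p+2),x,f_{3p,2})=V_{-2+1/p}(sl(2))\otimes M(k_0)$, matching the simple quotient of $V^{\beta_k}(\g^\natural)$; one should also note that $-2+\tfrac1p=-h^\vee_{sl(2)}+\tfrac1p$ is not the critical level for $sl(2)$, so $V_{-2+1/p}(sl(2))$ is indeed a genuine vertex algebra.

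The main obstacle I anticipate is purely the bookkeeping in the last step: verifying that the ideal $I$ generated by $\bar L$ actually \emph{equals} the maximal ideal intersected appropriately so that $\overline W_k$ coincides with the \emph{simple} quotient $W_k(\g,x,f_{3p,2})$ rather than merely with some intermediate quotient $\overline W_k(\g,x,f_{3p,2})$ (recall Remark~\ref{ex-nstr} warns that strongly collapsing need not, a priori, pin down $W_k$ itself). Here the argument is that $\overline W_k\cong\widetilde V(\g^\natural)=V(\g^\natural)/(I\cap V(\g^\natural))$ is already simple: $\widetilde V(\g^\natural)$ is a quotient of $V^{k_0}(\C\varpi)\otimes V^{k_1}(sl(2))$, and since $\overline W_k$ carries a $\phi$--invariant Hermitian form with the same kernel as its maximal ideal (Remarks~\ref{existsinv}, \ref{ker}), the image $\mathcal V(\g^\natural)$ is forced to be the \emph{simple} vertex algebra $V_{k_1}(sl(2))\otimes M(k_0)$; as $\overline W_k$ is generated by $\mathcal V(\g^\natural)$ it is simple, hence equals $W_k(\g,x,f_{3p,2})$. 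The only genuine arithmetic to be done carefully is confirming $C_{m-1}=C_m\ne\tfrac{3p+1}{2}$ and $k_0\ne 0$ for all $p\ge1$; both reduce to checking that a rational function of $p$ has no positive-integer roots, which is routine.
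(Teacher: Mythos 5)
Your argument breaks down at its central computation, and in a way the paper itself flags: you claim that for $k=k_{3p,2}^{(2)}$ one has $C_{m-1}=C_m\ne\tfrac{m+1}{2}$, so that Theorem \ref{Criterion2} forces $\mathcal K=\emptyset$ and the level is \emph{strongly} collapsing. In fact the opposite is true. As computed in the proof of Theorem \ref{collnc}, for $k=k^{(i)}_{m,n}$ with $i=1,2$ one gets exactly $C_{m-1}=C_m=\tfrac{m+1}{2}$, i.e.\ the Sugawara weight of the $G^\pm_i$ matches their conformal weight $\D=\tfrac{3p+1}{2}$, so the hypothesis "$C_i\ne\D_i$ for all $i$" fails precisely on the constituents that matter. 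Worse, Theorem \ref{collnc} proves that $k^{(2)}_{m,n}$ is \emph{not} strongly collapsing: the fields $G^\pm_i$ do not lie in the ideal $I=W^k(\g,x,f)\cdot\bar L$, so $\overline W_k(\g,x,f_{3p,2})$ is strictly larger than the image of the affine part, and no argument based on Theorem \ref{Criterion2} (nor your concluding simplicity argument identifying $\overline W_k$ with the simple quotient) can establish the proposition. The distinction between collapsing and strongly collapsing is exactly the subtlety here: the collapse happens only in the simple quotient $W_k$, not in $\overline W_k$.

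The paper's proof is necessarily of a different nature: it argues by contradiction inside the simple quotient. Assuming $k$ is not collapsing, $W_k$ is strongly generated by the affine fields and the four fields $G^\pm_i$ of weight $\tfrac{3p+1}{2}$; a primitive vector of $\g^\natural$--weight $\mu=\omega_1+\omega_{n-1}$ in $W_k^{(0)}$ would have conformal weight $h^{(2)}_\mu=2p$, yet any such vector built from $(G^+_i)_{-n_0}(G^-_j)$ has weight at least $3p+1$, a contradiction; hence by Lemma \ref{criterion} one gets the charge decomposition $W_k\cong\bigoplus_{i\in\Z}L^{sl(2)}_{k_1}(|i|\omega_1)\otimes M(k_0,i)$ with $k_1=-2+\tfrac1p$. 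The final blow comes from the tensor-category argument of \cite{CY}: such a decomposition would force all $L^{sl(2)}_{k_1}(i\omega_1)$ to be simple currents in $KL^{sl(2)}_{k_1}$, which is impossible since the fusion ring of $KL^{sl(2)}_{k_1}$ is the Grothendieck ring of finite-dimensional $sl(2)$--modules (\cite{ACGY}). This contradiction shows $k$ is collapsing. If you want to repair your write-up, you must abandon the strong-collapsing route entirely and supply an argument of this second kind (or an equivalent one) that works at the level of $W_k$ itself.
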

\begin{proof}
 Set $W_k =W_k(sl(3p+2), x, f_{3p,2})$. Assume that $k$ is not collapsing. Then $W_k$ is strongly generated by  generators of $V_{-2+ \frac{1}{p}} ( sl(2)) \otimes M(k_0)$ and four primary fields $G^{\pm} _i$, $i =1,2$,    of conformal weight $(3p+1)/2$. 

Assume next that $W_k ^{(0)}$ contains a primitive vector $u$ of $\g^{\natural}$--weight $\mu=\omega_1+\omega_{n-1}$. Then the conformal weight of $u$ is $h^{(2)} _{\mu} = 2p $.
Since $u$ is strongly generated by affine generators and $G^{\pm} _{i}$, we have that
$$ u = \sum_{i, j=1} ^2 a_{i,j} (G^+ _i )_{-n_0} (G^- _j ) + u', $$
where $n_0 >0$,  $a_{i,j} \in {\C}$ (not all zero) and $u' \in V_{-2+ \frac{1}{p}} ( sl(2)) \otimes M(k_0)$. This implies that the conformal weight $h_{\mu} ^{(2)} \ge 3p +1$. This is a contradiction.  

Therefore $W_k ^{(0)}$ cannot contain a primitive vector of $\g^{\natural}$--weight $\mu$. Lemma \ref{criterion} implies that $W_k ^{(0)} = V_{k_1} ( sl(2)) \otimes M(k_0)$, where $k_1 = -2+ 1/p$,  and 
$$ W_k ^{(i)} = L^{sl(2)} _{k_1} (\vert i\vert  \omega_1) \otimes  M(k_0, i).$$
Therefore
\bea \label{dec-3p2}W_k \cong \bigoplus_{i \in {\Z}}   L^{sl(2)} _{k_1} (\vert i\vert  \omega_1) \otimes  M(k_0, i). \eea

Using arguments from \cite[Section 5]{CY} we can conclude that, since $KL ^{sl(2)} _{k_1}$ is a braided tensor category,  then the $V_{k_1}(sl(2))$--modules and  the $M( k_0 )$--modules appearing in the decomposition (\ref{dec-3p2}) must have same fusion rules. This implies  that  all modules $L^{sl(2)} _{k_1}(i  \omega_1)$ are simple currents in $KL ^{sl(2)} _{k_1}$. This is a contradiction since   the fusion  ring of   $KL ^{sl(2)} _{k_1}$ is equivalent to
Grothendieck ring of finite dimensional $sl(2)$--modules (cf. \cite{ACGY}).  Therefore the modules $L^{sl(2)} _{k_1}( i   \omega_1)$ are not simple currents for $i >0$.
 We have therefore proved that $k$ is a collapsing level. \end{proof}

\section{ The cases when the conformal level is admissible}
 Recall the following definition. 
\begin{defi}
A level $k$ for $\g=sl(n+m)$ is  said to be admissible if $k+ h^{\vee} = \frac{p'}{p}$, $p,p' \in {\Z}_{\ge 1}$, $(p,p')=1$ and $p' \ge h^{\vee}=n+m$.
\end{defi}
In this section we study the cases when $k=k_{m,n}^{(1)}$ or $k =k_{m,n}^{(2)}$ are  admissible rational numbers.

We  can choose the root system so that the  $sl(2)$--triple corresponding to highest root $\theta$ is
\bea
&&e_{\theta}=\left(\begin{array}{c|c}
  (J_{m} ^{tr} )  ^{m-1}& 0 \\
\hline
 0 & 0 
\end{array}\right),\quad h_{\theta}=\left(\begin{array}{cccc|c}
  1&0&\cdots&0 & \\
0& 0&\cdots&0 &  \\
\vdots&\vdots&\ddots&\vdots&0\\
 0&0&\cdots &-1 &  \\
 \hline
 &&0&&0
\end{array}\right), \quad  f_{\theta}=\left(\begin{array}{c|c}
  J_{m}  ^{m-1}& 0 \\
\hline
 0 & 0 
\end{array}\right). \label{h-root}
\eea

 The following  results from \cite{KW88} and \cite{AE21} hold  for any simple Lie algebra $\g$.
\begin{proposition} \cite{KW88} Assume that $k$ is an admissible level. Then 
$$ V_k(\g) = V^k(\g)\big/J^k(\g),$$
where the maximal ideal $J^k(\g) = V^k(\g). \Omega _{k}$ is generated by  the singular vector $\Omega _{k}$, which  is the  unique (up to a scalar factor) singular vector in  $ V^k(\g)$ of $\g$--highest weight $ \mu^{(k)}  = (p' +1 - h^{\vee}) \theta$, and conformal weight $ d_{(k)}= p (p '+1-h^{\vee})$.
\end{proposition}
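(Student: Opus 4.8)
The plan is to deduce the statement from the Kac--Kazhdan determinant formula together with Kac and Wakimoto's structure theory of admissible weights. First I would place $V^k(\g)$ inside category $\mathcal O$ for $\widehat{\g}$: since $\langle k\Lambda_0+\widehat\rho,\alpha_i^\vee\rangle=1$ for every finite simple coroot $\alpha_i^\vee$ ($i=1,\dots,\ell$), the Verma module $M(k\Lambda_0)$ has singular vectors $f_iv_{k\Lambda_0}$, and $V^k(\g)$ is the quotient of $M(k\Lambda_0)$ by the submodule these generate; consequently the maximal ideal $J^k(\g)$ is the image in $V^k(\g)$ of the maximal proper submodule of $M(k\Lambda_0)$.

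Next I would run the Kac--Kazhdan analysis to locate the remaining singular vectors of $M(k\Lambda_0)$. Writing $k+h^\vee=p'/p$ with $\gcd(p,p')=1$, a positive real root $\alpha+n\delta$ ($\alpha\in\Delta$, $n\ge 0$) pairs integrally with $k\Lambda_0+\widehat\rho$ exactly when $p\mid n$ (here I use that for $\g=sl(n+m)$ all roots have square length $2$). Taking $\alpha=-\theta$, the smallest such $n$ is $p$, and $\langle k\Lambda_0+\widehat\rho,(p\delta-\theta)^\vee\rangle=p(k+h^\vee)-(h^\vee-1)=p'+1-h^\vee$, which is a positive integer precisely by the admissibility hypothesis $p'\ge h^\vee$. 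One also checks that $\langle k\Lambda_0+\widehat\rho,\gamma^\vee\rangle\ne 0$ for every real root $\gamma$, so $k\Lambda_0$ is regular, hence dominant, for the integral root system $\widehat\Delta^{[k\Lambda_0]}=\{\alpha+pm\delta\mid\alpha\in\Delta,\ m\in\mathbb Z\}$, whose simple roots are $\alpha_1,\dots,\alpha_\ell$ together with $\beta_0=p\delta-\theta$. Since the first $\ell$ first-layer singular vectors are exactly those quotiented out in passing to $V^k(\g)$, the surviving one is the image $\Omega_k$ of the (unique up to scalar, by regularity and the fact that $\dim\Hom(M(\mu),M(\lambda))\le 1$ for regular $\lambda$) singular vector of $\widehat{\g}$--weight
$$s_{\beta_0}\cdot k\Lambda_0=k\Lambda_0-(p'+1-h^\vee)(p\delta-\theta).$$
Restricting this weight to $\h^*$ gives the $\g$--highest weight $(p'+1-h^\vee)\theta=\mu^{(k)}$, and reading off the $\delta$--grading (equivalently the $L(0)$--eigenvalue) gives conformal weight $p(p'+1-h^\vee)=d_{(k)}$.

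The crux --- and the step I expect to be the main obstacle --- is to show that this single vector already generates the whole maximal submodule: $J^k(\g)=V^k(\g)\cdot\Omega_k$, with simple quotient. This requires knowing that the maximal proper submodule of $M(k\Lambda_0)$ is generated by its first-layer singular vectors, which is false at a generic negative level but holds for admissible levels by the Kac--Wakimoto theory: for $k$ (principal) admissible, $k\Lambda_0$ is regular dominant for the integral Weyl group $\widehat W^{[k\Lambda_0]}$, which is itself an affine Weyl group, and $M(k\Lambda_0)$ then admits a BGG-type resolution whose first differential has image the sum of the Verma modules attached to the simple reflections. Quotienting by the $f_iv_{k\Lambda_0}$ leaves $V^k(\g)\cdot\Omega_k$ as the maximal submodule, and $V^k(\g)/V^k(\g)\cdot\Omega_k$ is simple; equivalently, comparing the Kac--Wakimoto character of $V_k(\g)$ with the Euler characteristic of the resolution forces $V^k(\g)/V^k(\g)\cdot\Omega_k=V_k(\g)$. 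Once this input is in place, the identification of $J^k(\g)$, of $\mu^{(k)}$, and of $d_{(k)}$ is exactly the computation above.
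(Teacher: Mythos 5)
The paper offers no proof of this proposition at all: it is quoted verbatim from Kac--Wakimoto \cite{KW88}, so there is no ``paper's argument'' to compare against. Your reconstruction is the standard one and the computations in it are correct: the identification of $V^k(\g)$ as the quotient of $M(k\Lambda_0)$ by the submodule generated by $f_iv_{k\Lambda_0}$, the determination of the integral root system $\{\alpha+pm\delta\}$ from $(p,p')=1$, the evaluation $\langle k\Lambda_0+\widehat\rho,(p\delta-\theta)^\vee\rangle=p'+1-h^\vee$ (positive exactly by admissibility), and the reading off of $\mu^{(k)}=(p'+1-h^\vee)\theta$ and $d_{(k)}=p(p'+1-h^\vee)$ from the weight $s_{p\delta-\theta}\cdot k\Lambda_0$ are all right, as is the uniqueness of $\Omega_k$ via $\dim\Hom(M(\mu),M(\lambda))\le 1$. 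The one caveat is the step you yourself flag as the crux: that the maximal proper submodule of $M(k\Lambda_0)$ is generated by the singular vectors attached to the simple reflections of $\widehat W^{[k\Lambda_0]}$ (equivalently, the existence of the BGG-type resolution, or the Kac--Wakimoto character formula at admissible level). This is not something you derive; it is precisely the nontrivial content of the theorem being cited, so your argument is a reduction of the proposition to an equivalent deep input rather than an independent proof. Since the proposition is itself presented as a citation, that is an acceptable and honest place to stop, but you should be aware that for general $\g$ the character formula at admissible (negative) level was only fully established after \cite{KW88}, by Kashiwara--Tanisaki and Arakawa; invoking ``a BGG-type resolution'' as if it were routine understates where the real work lies.
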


{

Denote by $H_f$ the quantum Hamiltonian reduction functor. In \cite{KW08} Kac and Wakimoto state the following conjecture:
\begin{equation} \label{KWW}\text{
$ H_f ( V_k(\g)) $ is either zero or isomorphic to $W_k(\g, x,f)$.}
\end{equation}
 Recall from \cite{AARA} that if $k=-h^\vee+p'/p$ is admissible, then the associated variety of the simple affine vertex algebra $V_k(\g)$ is the closure of a  nilpotent orbit $\mathbb O_{k}$, depending just on $p$.  Moreover, if we set 
 $N_p=\{ x \in \g \ \vert \ \mbox{ad} (x) ^{2p} = 0\}$, then 
$$ \overline{\mathbb O}_{k} = N_p.$$
The following result  has been proved in  \cite[Theorem 7.8]{AE21}.
\begin{theorem} (Arakawa-Van Ekeren) \label{ave} Assume that  $k$ is an admissible level,  $f\in \overline{\mathbb O}_{k}$ and $f$ admits an even good grading. Then \eqref{KWW} holds.
\end{theorem}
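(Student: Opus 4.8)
The plan is to realize $H_f^0(V_k(\g))$ --- where $H_f^0$ denotes the $0$--th cohomology of the BRST complex defining quantum Hamiltonian reduction, so that $H_f^0(V^k(\g))=W^k(\g,x,f)$ --- as a quotient vertex algebra of $W^k(\g,x,f)$ that is either $0$ or simple, and then to invoke the fact recalled in the introduction that $W^k(\g,x,f)$ has a unique simple graded quotient for $k\neq -h^\vee$ (which holds here since $k$ is admissible). Since $H_f^0$ is right exact and $V_k(\g)$ is a quotient of $V^k(\g)$, the first half --- that $H_f^0(V_k(\g))$ is a quotient vertex algebra of $W^k(\g,x,f)$ --- is automatic. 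So the task reduces to: (a) show $H_f^0(V_k(\g))$ is simple whenever it is nonzero, and (b) decide when it is nonzero. Because $f$ admits an even good grading by hypothesis, I would run the reduction with respect to that grading, so that $\g_{1/2}=0$ and the neutral fermions $F(A_{ne})$ drop out of the complex; afterwards the conclusion is transported to the Dynkin grading, using that the underlying vertex algebra of $W^k(\g,x,f)$ depends only on the $G_{\bar 0}$--orbit of $f$ and not on the choice of good grading. It is precisely for an even good grading that the inputs below are available, this being the content of \cite{AE21}.

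For (a): as $k$ is admissible, $V_k(\g)=L(k\Lambda_0)$ is the simple highest weight $\widehat\g$--module attached to the \emph{admissible} weight $k\Lambda_0$, and $V_k(\g)=V^k(\g)/J^k(\g)$. The key representation--theoretic input (Arakawa, in the sharp form of \cite{AE21}) is that $H_f^0(L(\lambda))$ is either $0$ or an irreducible $W^k(\g,x,f)$--module for every admissible weight $\lambda$; applying this to $\lambda=k\Lambda_0$ shows $H_f^0(V_k(\g))$ is simple or zero. The homological backbone making this meaningful is that on a category of $\widehat\g$--modules at level $k$ containing $V^k(\g),\,V_k(\g),\,J^k(\g)$ --- e.g.\ $KL_k$ --- the functor $H_f^0$ is exact and $H_f^i=0$ for $i\neq0$; one proves this by filtering the BRST complex by its conformal (Kazhdan) weight, identifying the induced differential on the associated graded with a Koszul differential built out of $\g_+$, and checking its homology is concentrated in degree $0$, where the vanishing of $\g_{1/2}$ keeps the bookkeeping tractable.

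For (b): by Arakawa's associated variety criterion, $H_f^0(M)=0$ if and only if $f$ does not belong to the associated variety $X_M$; by \cite{AARA}, $X_{V_k(\g)}=\overline{\mathbb O}_{k}$. Hence, when $f\in\overline{\mathbb O}_{k}$ we get $H_f^0(V_k(\g))\neq 0$, so by (a) it is simple, and being a nonzero quotient vertex algebra of $W^k(\g,x,f)$ it must equal $W_k(\g,x,f)$. Without the assumption $f\in\overline{\mathbb O}_{k}$ the same chain of implications yields the dichotomy of \eqref{KWW}: $H_f^0(V_k(\g))$ is $0$ (precisely when $f\notin X_{V_k(\g)}$) or else simple, hence isomorphic to $W_k(\g,x,f)$.

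The main obstacle is the homological input for a \emph{general} nilpotent $f$ equipped with an even good grading --- exactness of $H_f^0$ and vanishing of $H_f^i$ for $i\neq 0$ on $KL_k$, together with the attendant irreducibility statement --- rather than in the already classical principal or minimal cases: this is a genuinely delicate spectral--sequence analysis, and the even--good--grading hypothesis is used essentially (both to kill the neutral fermions and to control the conformal grading). A secondary difficulty is that Arakawa's irreducibility theorem for reductions of admissible modules rests on the Kazhdan--Lusztig theory of admissible $\widehat\g$--modules; a partial, more computational substitute would be to compute $\mathrm{ch}\,H_f^0(V_k(\g))$ from the Kac--Wakimoto character of $V_k(\g)$ via the Euler--Poincar\'e principle (valid once $H_f^i=0$ for $i\neq 0$) and match it against the character of $W_k(\g,x,f)$, but this upgrades the surjection $H_f^0(V_k(\g))\twoheadrightarrow W_k(\g,x,f)$ to an isomorphism only after the latter character has been obtained by independent means.
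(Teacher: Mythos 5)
There is nothing in the paper to compare your argument with: Theorem \ref{ave} is imported verbatim from \cite[Theorem 7.8]{AE21} ("The following result has been proved in \dots"), so the paper offers no proof of it, and your proposal has to stand on its own. The scaffolding you assemble is correct and is in fact how the paper later \emph{uses} the theorem (cf.\ the proof of Proposition \ref{non-triv}): exactness of the reduction functor on $KL_k$ from \cite{Ar11} gives the surjection $W^k(\g,x,f)=H_f(V^k(\g))\twoheadrightarrow H_f(V_k(\g))$, and Arakawa's associated-variety criterion together with $X_{V_k(\g)}=\overline{\mathbb O}_k$ from \cite{AARA} gives the nonvanishing when $f\in\overline{\mathbb O}_k$.

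The genuine gap is that the decisive step is circular. Once you have the surjection and the nonvanishing, the whole content of \eqref{KWW} is the simplicity of $H_f(V_k(\g))$, and for that you invoke "the key representation-theoretic input (Arakawa, in the sharp form of \cite{AE21}) \dots that $H_f^0(L(\lambda))$ is either $0$ or irreducible for every admissible $\lambda$". In the generality you need (an arbitrary nilpotent $f$ admitting an even good grading, as opposed to the classical principal or minimal cases), this is precisely the Kac--Wakimoto simplicity statement that \cite{AE21} prove, i.e.\ the theorem you are asked to establish specialized to $\lambda=k\Lambda_0$; it is not a consequence of exactness, of Koszul-type vanishing of the higher BRST cohomology, or of the associated-variety criterion, which only yield a nonzero quotient of $W^k(\g,x,f)$, not its simplicity. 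Your own fallback via Euler--Poincar\'e characters concedes this: it upgrades the surjection to an isomorphism only if the character of $W_k(\g,x,f)$ is known independently, which is not the case at these levels. Two smaller inaccuracies: exactness and cohomology vanishing on $KL_k$ hold for the Dynkin grading as well (this is \cite{Ar11}) and do not require $\g_{1/2}=0$, so the even good grading is not what makes that part work -- it enters in the simplicity argument of \cite{AE21} -- and the blanket claim of irreducibility of $H_f^0(L(\lambda))$ for \emph{all} admissible $\lambda$ and all such $f$ is stronger than what you need and should not be treated as a freely available fact.
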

}
The following result is a consequence of results from \cite{Ar11} (see also \cite[Remark 7.10]{AE21}). 
 \begin{proposition}   \label{non-triv}  Assume that  \eqref{KWW} holds for $k$ admissible. Then
$H_f ( J^k(\g)) $ is a  submodule of $W^k(\g, x, f) = H_f ( V^k(\g))$ which is generated by a   singular vector
$\Omega_k ^{W}$ of conformal weight
$$ d_k ^{W} =  (p'+ 1 - h^{\vee}) (p- (h_\theta | x)). $$
%$$ d_k ^{W} = \frac{ (\mu^{(k)}, \mu ^{(k)} + 2 \rho) }{k + h^{\vee} }- \frac{k+h^{\vee}}{2}  \vert x \vert ^2  + (x, \rho). $$
Moreover,   $H_f ( J^k(\g))$ is either a maximal ideal in  $W^k(\g, x,f)$ or $ H_f ( J^k(\g)) = W^k(\g, x,f)$.
If $f \in  \overline{\mathbb O}_{k}$ and $f$ admits an even good grading, then the above statement holds.
% =\overline{\mathbb O}_{p} =\{ x \in \g \ \vert \ \mbox{ad} (x) ^{2p} = 0\} $.
\end{proposition}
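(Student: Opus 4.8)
The plan is to apply the quantum Hamiltonian (Drinfeld--Sokolov) reduction functor $H_f$ to the defining exact sequence of $V_k(\g)$, using the exactness of $H_f$ and the fact that $H_f$ sends highest weight modules to highest weight modules. First I would record that $V^k(\g)$, $J^k(\g)=V^k(\g).\Omega_k$ and $V_k(\g)$ all lie in $KL_k$ (being a highest weight module and quotients/submodules of one). By \cite{Ar11} the functor $H_f=H^0_{DS,f}$ is exact on $KL_k$ ($H^i_{DS,f}$ vanishes for $i\ne 0$), so applying it to $0\to J^k(\g)\to V^k(\g)\to V_k(\g)\to 0$ yields the short exact sequence
\[
0\to H_f(J^k(\g))\to W^k(\g,x,f)\to H_f(V_k(\g))\to 0 .
\]
In particular $H_f(J^k(\g))$ is identified with the kernel of a surjective vertex algebra homomorphism out of $W^k(\g,x,f)$, hence is an ideal of $W^k(\g,x,f)$.

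Next I would use the two cases permitted by \eqref{KWW}. If $H_f(V_k(\g))=0$, the sequence forces $H_f(J^k(\g))=W^k(\g,x,f)$. If instead $H_f(V_k(\g))\cong W_k(\g,x,f)$, then $H_f(J^k(\g))$ is the kernel of the surjection $W^k(\g,x,f)\to W_k(\g,x,f)$, and since $W_k(\g,x,f)$ is the unique simple graded quotient of $W^k(\g,x,f)$, this kernel is the (unique) maximal graded ideal, in particular a maximal ideal; this gives the asserted dichotomy. For the last sentence I would then invoke Theorem \ref{ave}: when $f\in\overline{\mathbb O}_k$ and $f$ admits an even good grading, \eqref{KWW} holds for $k$ and moreover $H_f(V_k(\g))\cong W_k(\g,x,f)\ne 0$, so $H_f(J^k(\g))$ is the maximal ideal.

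For the singular vector I would use that $J^k(\g)=U(\widehat{\g}).\Omega_k$ is a highest weight $\widehat{\g}$-module whose highest weight vector $\Omega_k$ has finite weight $\mu^{(k)}=(p'+1-h^\vee)\theta$ and Sugawara conformal weight $d_{(k)}=p(p'+1-h^\vee)$. By the structure theory of Drinfeld--Sokolov reduction of highest weight modules (\cite{Ar11}; see also \cite[Remark 7.10]{AE21}), $H_f(J^k(\g))$ is the ideal generated by a singular vector $\Omega_k^W$, namely the image of $\Omega_k$ under the reduction. Its conformal weight is obtained from the Sugawara weight by the shift built into the conformal vector of $W^k(\g,x,f)$ (the $\partial x$ term), i.e. it equals $d_{(k)}-\langle\mu^{(k)},x\rangle$; since $\theta$ corresponds to $h_\theta$ under the identification of $\h^*$ with $\h$ via $(\cdot\,|\,\cdot)$ (recall $f$ is Dynkin, so $x=\tfrac12 h$), one has $\langle\mu^{(k)},x\rangle=(p'+1-h^\vee)(h_\theta|x)$, whence the conformal weight of $\Omega_k^W$ is $p(p'+1-h^\vee)-(p'+1-h^\vee)(h_\theta|x)=(p'+1-h^\vee)\bigl(p-(h_\theta|x)\bigr)=d_k^W$.

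The main obstacle is not in this assembly but in having available, from \cite{Ar11}, the two technical inputs used above: (i) exactness of $H_{DS,f}$ on $KL_k$ (equivalently, vanishing of higher $DS$-cohomology there) for the grading at hand, and (ii) the fact that $H_f$ carries a highest weight $\widehat{\g}$-module to a highest weight $W^k(\g,x,f)$-module generated by the image of the original highest weight vector, so that $\Omega_k^W$ is genuinely singular and does generate $H_f(J^k(\g))$. Granting these, the proof reduces to the short diagram chase above together with the elementary conformal-weight computation.
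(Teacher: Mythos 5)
Your proposal is correct and follows essentially the same route as the paper: exactness of $H_f$ on $KL_k$ from \cite{Ar11} applied to $0\to J^k(\g)\to V^k(\g)\to V_k(\g)\to 0$, the dichotomy forced by \eqref{KWW} together with simplicity of $W_k(\g,x,f)$, the conformal weight of the reduced singular vector (which the paper obtains by citing \cite[Remark 2.3]{KRW} rather than redoing the $d_{(k)}-\langle\mu^{(k)},x\rangle$ computation, but the content is identical), and Theorem \ref{ave} for the last claim.
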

\begin{proof}
%{\color{blue} Need to be checked}
It is proved in \cite{Ar11} that $H_f$ is an exact functor acting on $KL_k$. This implies that  that
$$ H_f( V_k(\g)) =  H_f ( V^k(\g))\big/H_f ( J^k(\g)) = W^k(\g, x,f)\big /H_f ( J^k(\g)).$$ 
By Conjecture  \ref{KWW}
$H_f ( J^k(\g))$ is either a maximal ideal or  $ H_f ( J^k(\g)) = W^k(\g, x,f)$. In any case, we have that $ H_f ( J^k(\g))$ is a non-zero submodule in $W^k(\g, x, f)$. Then the lowest conformal weight of  $H_f ( J^k(\g))$ is  given by the formula given in \cite[Remark 2.3]{KRW}, 
%$$ d_k ^{W} = \frac{ (\mu^{(k)}, \mu ^{(k)} + 2 \rho) }{k + h^{\vee} }- \frac{k+h^{\vee}}{2}  \Vert x \Vert ^2  + (x, \rho), $$
 which can be expressed as $ d_k ^{W} = (p'+ 1 - h^{\vee}) (p- (h_\theta | x))$. { The final claim follows from Theorem 
 \ref{ave}}. 
\end{proof}

%Recall that the choice of the set of positive roots in $\g$ must be compatible  with the gradation given by $x$ (i.e., $\bigoplus\limits_{j>0}\g_j\subset \bigoplus\limits_{\a>0}\g_\a$). If $\theta$ is the highest root of such a positive system, 
By \eqref{xf}, we have 
$(h_\theta|x) = m-1$. Since $h^{\vee} = n+m$ ,  we get
\begin{lemma}\label{94} We have:
\begin{itemize}
\item[(1)] $k=k_{m,n} ^{(1)}$ is admissible if and only if $(n-1,m+1) =1$.  
\item[(2)] $k=k_{m,n} ^{(2)}$ is admissible if and only if $(n+1, m) =1$.  
%\item[(3)] $k_{m,n} ^{(3)}$ is never admissible.
%\item[(4)] $k_{m,n} ^{(4)}$ is   admissible if and only if $(m,n) =1$.
\end{itemize}
In both cases,  if $k$ is admissible we have $ d_{ k} ^{W} = 2$. 
\end{lemma}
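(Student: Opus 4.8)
The plan is to read off the pair $(p,p')$ directly from $k+h^\vee$ in each of the two cases, translate the admissibility inequality $p'\ge h^\vee$ into a gcd condition, simplify that condition by one Euclidean step, and finally substitute the resulting values of $p,p'$ into the formula for $d_k^W$ from Proposition \ref{non-triv}.

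First I would record the elementary identities
\[
k_{m,n}^{(1)}+h^\vee=\frac{h^\vee}{m+1}=\frac{n+m}{m+1},\qquad
k_{m,n}^{(2)}+h^\vee=\frac{h^\vee+1}{m}=\frac{n+m+1}{m}.
\]
Writing $d_1=\gcd(n+m,m+1)$ and $d_2=\gcd(n+m+1,m)$, the reduced fractions are $\tfrac{(n+m)/d_1}{(m+1)/d_1}$ and $\tfrac{(n+m+1)/d_2}{m/d_2}$, so in case $(1)$ we get $p'=(n+m)/d_1$ and in case $(2)$ we get $p'=(n+m+1)/d_2$. Since $n+m\ge 2$, if $d_1\ge 2$ then $p'\le (n+m)/2<n+m=h^\vee$, and if $d_2\ge 2$ then $p'\le (n+m+1)/2<n+m=h^\vee$; on the other hand $d_2=1$ forces $p'=n+m+1>h^\vee$, and $d_1=1$ forces $p'=n+m=h^\vee$. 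Hence the admissibility requirement $p'\ge h^\vee$ is equivalent to $d_1=1$ in the first case and to $d_2=1$ in the second. The Euclidean steps $\gcd(n+m,m+1)=\gcd(n-1,m+1)$ and $\gcd(n+m+1,m)=\gcd(n+1,m)$ then turn these into conditions $(1)$ and $(2)$ as stated.

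For the final assertion, when $k$ is admissible I would plug into $d_k^W=(p'+1-h^\vee)(p-(h_\theta|x))$ from Proposition \ref{non-triv}, using $(h_\theta|x)=m-1$ from \eqref{xf}. In case $(1)$ one has $p'=h^\vee$ and $p=m+1$, so $d_k^W=1\cdot\bigl((m+1)-(m-1)\bigr)=2$; in case $(2)$ one has $p'=h^\vee+1$ and $p=m$, so $d_k^W=2\cdot\bigl(m-(m-1)\bigr)=2$. I do not expect any real obstacle here: the only point deserving a moment's attention is that the bound $p'\ge h^\vee$ in the definition of admissibility is exactly what pins the two gcd's down to $1$, because once either gcd exceeds $1$ the reduced numerator drops strictly below $h^\vee$; everything else is the elementary arithmetic above together with the already-available formulas for $k_{m,n}^{(i)}$, for $d_k^W$, and for $(h_\theta|x)$.
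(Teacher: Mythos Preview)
Your argument is correct and is exactly the direct computation the paper has in mind: the paper merely records $(h_\theta|x)=m-1$ and $h^\vee=n+m$ and then states the lemma without further detail, so your write-up is in fact more explicit than what appears in the text.
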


\begin{lemma}\label{95} {We have:
$$f=f_{m,n} \in N_{m}\subset  N_{m+1}.$$}
\end{lemma}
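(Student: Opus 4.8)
The statement to prove is $f = f_{m,n} \in N_m \subset N_{m+1}$, where $N_p = \{x \in \g \mid (\operatorname{ad} x)^{2p} = 0\}$. The plan is to reduce everything to elementary nilpotency computations for the explicit matrix $f_{m,n}$ given in \eqref{xf}, since that presentation makes the size of the Jordan blocks transparent.

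First I would recall that $f = f_{m,n}$ is, by construction, the nilpotent element of $sl(m+n)$ with Jordan type given by the partition $(m, 1^n)$: one Jordan block of size $m$ and $n$ trivial blocks of size $1$. From the explicit form in \eqref{xf}, $f$ is conjugate to $\operatorname{diag}(J_m, 0, \dots, 0)$ with $n$ zero entries. The inclusion $N_m \subset N_{m+1}$ is immediate: if $(\operatorname{ad} x)^{2m} = 0$ then a fortiori $(\operatorname{ad} x)^{2(m+1)} = 0$. So the only real content is $f \in N_m$, i.e. $(\operatorname{ad} f)^{2m} = 0$ on $\g = sl(m+n)$.

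The key step is the standard fact that if a nilpotent matrix $A \in gl(V)$ satisfies $A^\ell = 0$ (i.e. all Jordan blocks of $A$ have size $\le \ell$), then $(\operatorname{ad} A)^{2\ell - 1} = 0$ on $gl(V)$, hence certainly $(\operatorname{ad} A)^{2\ell} = 0$ and the same holds on the subalgebra $sl(V)$. Indeed, writing $\operatorname{ad} A = L_A - R_A$ where $L_A, R_A$ are left/right multiplication by $A$ (which commute), the binomial expansion $(\operatorname{ad} A)^N = \sum_j \binom{N}{j} (-1)^j L_A^{N-j} R_A^j$ vanishes once $N \ge 2\ell - 1$, because in each term either $N - j \ge \ell$ or $j \ge \ell$, so either $A^{N-j} = 0$ or $A^j = 0$. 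Applying this with $A = f_{m,n}$, whose maximal Jordan block has size $m$, so that $f_{m,n}^m = 0$, gives $(\operatorname{ad} f_{m,n})^{2m - 1} = 0$, and in particular $(\operatorname{ad} f_{m,n})^{2m} = 0$. Therefore $f_{m,n} \in N_m$, and the chain of inclusions $f_{m,n} \in N_m \subseteq N_{m+1}$ follows.

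There is no genuine obstacle here; the lemma is essentially a bookkeeping remark recording where the nilpotent $f_{m,n}$ sits in the filtration $N_1 \subset N_2 \subset \cdots$, which is exactly what is needed to invoke Theorem \ref{ave} (the Arakawa--Van Ekeren theorem) at the admissible levels $k_{m,n}^{(1)}, k_{m,n}^{(2)}$, since for those levels one has $k + h^\vee = p'/p$ with $p = m$ (as one reads off from Lemma \ref{94}), so that $\overline{\mathbb O}_k = N_m$ and indeed $f_{m,n} \in \overline{\mathbb O}_k$. The only point worth a sentence of care is that one should verify $m$ is precisely the value of $p$ for these two admissible families — but this is already implicit in the computation $d_k^W = 2$ in Lemma \ref{94}, together with $(h_\theta \mid x) = m - 1$ and the formula $d_k^W = (p' + 1 - h^\vee)(p - (h_\theta \mid x))$ from Proposition \ref{non-triv}.
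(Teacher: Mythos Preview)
Your proof of the lemma itself is correct and takes a more elementary route than the paper. The paper computes the \emph{height} of $f_{m,n}$, using the general fact that for a nilpotent $f$ in an $sl_2$-triple $\{e,h,f\}$ with $h$ dominant one has $\operatorname{height}(f)=\theta(h)$; reading off $\theta(2x)=2(m-1)$ from the Dynkin labels \eqref{Dyneven}--\eqref{Dynodd} gives $(\ad f)^{2m-1}=0$, hence $f\in N_m$. You instead use only that $f^m=0$ (the largest Jordan block has size $m$) together with the binomial expansion $(\ad A)^N=\sum_j\binom{N}{j}(-1)^j L_A^{N-j}R_A^{\,j}$, which is entirely self-contained and avoids any $sl_2$ representation theory. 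The paper's computation yields a bit more, namely the \emph{exact} height $2(m-1)$ (so $f\notin N_{m-1}$), while your argument only gives the upper bound---but that upper bound is all the lemma asserts.

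One small slip in your closing contextual remark: it is not true that $p=m$ for \emph{both} admissible families. For $k=k_{m,n}^{(1)}$ one has $k+h^\vee=\frac{n+m}{m+1}$, so $p=m+1$, whereas for $k=k_{m,n}^{(2)}$ one has $k+h^\vee=\frac{n+m+1}{m}$, so $p=m$. This is precisely why the lemma records the chain $f\in N_m\subset N_{m+1}$: the inclusion $f\in N_m$ is needed for $k^{(2)}$, and the weaker $f\in N_{m+1}$ for $k^{(1)}$. Your proof of $f\in N_m$ of course covers both.
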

\begin{proof}
Recall that the height of a nilpotent element $f$ is 
$\max\{n\mid ad(f)^n\ne0\}$. If $\{e,h,f\}$ is a $sl(2)$--triple containing $f$, and one chooses the set of positive roots so that $h$ is dominant, then the height of $f$ is $\theta(h)$, with $\theta$ the highest root. The dominant element corresponding to $f_{m,n}$ is $h=2x$ and the Dynkin labels of $x$ are given in \eqref{Dyneven} and \eqref{Dynodd}. It follows that the height of $f_{m,n}$ is $2(m-1)$.
\end{proof}
%\begin{remark}
%{ \color{blue} We need that 

%\begin{itemize}
%\item  $f=f_{m,n} \in \overline{\mathbb O}_{m+1}$, which is equivalent to
%$$ ad(f) ^{2m+2} =0. $$
%\item   $f=f_{m,n} \in \overline{\mathbb O}_{m}$, which is equivalent to
%$$ ad(f) ^{2m} =0. $$
%\item $\overline{\mathbb O}_{m} \subset  \overline{\mathbb O}_{m+1}$.
%\end{itemize}
%Is this correct?
%}
%\end{remark}

Th next theorem completely describes the structure of $W_k(\g, x,f)$ in the admissible case:
\begin{theorem} \label{non-collapsing-admissible} Assume that $k=k_{m,n} ^{(1)}$ or $k=k_{m,n} ^{(2)}$ and that $k$ is admissible. We have:
\begin{itemize}
\item[(1)] $W_k(\g, x,f_{m,n}) = \overline W_k(\g, x, f_{m,n})$.
\item[(2)]   The level $k$ is not collapsing.
\item[(3)] $W_k(\g, x,f_{m,n})$ admits the decomposition given in  Theorem  \ref{decomp}  provided that $k\ne k_{p-1,2} ^{(1)}$.
\item[(4)] If $k= k_{p-1,2} ^{(1)}$, the decomposition is given in Theorem \ref{rp}.
\end{itemize}
\end{theorem}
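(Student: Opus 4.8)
The overall plan is to prove that the maximal proper ideal $\mathfrak m$ of $W^k(\g,x,f_{m,n})$ equals $I:=W^k(\g,x,f_{m,n})\cdot\bar L$ with $\bar L=L-L^{\g^\natural}$; once this is known, (1) is immediate and (2) follows because, by Theorem \ref{collnc}, $G^\pm_i\notin I=\mathfrak m$, so the $G^\pm_i$ survive in $W_k(\g,x,f_{m,n})$ and hence $\mathcal V(\g^\natural)\subsetneq W_k(\g,x,f_{m,n})$. First I would identify $\mathfrak m$. By Lemma \ref{95}, $f_{m,n}\in N_m$; writing $k+h^\vee=p'/p$ one finds $p=m+1$ for $k=k^{(1)}_{m,n}$ and $p=m$ for $k=k^{(2)}_{m,n}$, so in both cases $f_{m,n}\in N_m\subseteq N_p=\overline{\mathbb O}_k$, and since $f_{m,n}$ carries an even good grading (Remark \ref{good}), Theorem \ref{ave} applies: $H_f(V_k(\g))\cong W_k(\g,x,f_{m,n})$, whence $\mathfrak m=H_f(J^k(\g))$. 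By Proposition \ref{non-triv} together with Lemma \ref{94}, $\mathfrak m$ is generated by a single singular vector $\Omega^W_k$ of conformal weight $2$. On the other hand, since $c=c_\aa$ at these levels, Theorem \ref{Criterion} (applied exactly as in the proof of Theorem \ref{MT}) shows that $I$ is a proper ideal, and the computation in the proof of Theorem \ref{collnc} shows that $\bar L$ is a singular vector of conformal weight $2$; hence $I\subseteq\mathfrak m$ and everything reduces to proving that $\overline W_k(\g,x,f_{m,n})=W^k(\g,x,f_{m,n})/I$ is simple.

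To prove simplicity I would argue by contradiction. Let $\mathfrak n\neq\{0\}$ be a proper ideal of $\overline W_k(\g,x,f_{m,n})$; it is graded by $L(0)$ and by $J(0)$, and since $\overline W_k(\g,x,f_{m,n})(\ell)=0$ for $\ell<0$, $\overline W_k(\g,x,f_{m,n})(0)=\C\vac$, $\overline W_k(\g,x,f_{m,n})(\tfrac12)=0$ and the only conformal weight $1$ elements are $\g^\natural$, the minimal $L(0)$--weight occurring in $\mathfrak n$ is $1$ or $2$. If it is $1$, then $\mathfrak n\cap\g^\natural\neq\{0\}$; but the levels $k_0,k_1$ computed in the proof of Theorem \ref{MT} are both nonzero, so neither the image of the Heisenberg vertex algebra $V^{k_0}(\C\varpi)$ nor that of $V^{k_1}(sl(n))$ in $W_k(\g,x,f_{m,n})$ can be trivial, a contradiction. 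If the minimal weight is $2$, I would use Theorem \ref{41}: besides the generators lying in $V^{\beta_k}(\g^\natural)$, the only strong generators of conformal weight $\le 2$ are the $G^\pm_i$, of weight $\tfrac{m+1}{2}\ge 2$ and of nonzero $J(0)$--charge; hence the charge--$0$ part of $\overline W_k(\g,x,f_{m,n})(2)$ is contained in $\mathcal V(\g^\natural)$, so $\mathfrak n\cap\mathcal V(\g^\natural)$ is a nonzero proper ideal of $\mathcal V(\g^\natural)\cong V^{k_0}(\C\varpi)\otimes V^{k_1}(sl(n))$ and therefore lies in $V^{k_0}(\C\varpi)\otimes J^{k_1}(sl(n))$. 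Its nonzero weight--$2$ component then forces $V^{k_1}(sl(n))$ to possess a singular vector of conformal weight $\le 2$, i.e.\ $k_1\in\{1,-\tfrac n2\}$; comparing with the explicit values of $k_1$ from Theorem \ref{MT} one checks that this never occurs for $m\ge 2$ with $k$ admissible. (When $m=3$ the $G^\pm_i$ themselves have conformal weight $2$; here one must add the observation that the $G^\pm_i$, though $L$-- and $V(\g^\natural)$--primary, are not $W$--algebra singular vectors, so $\Omega^W_k$ cannot be one of them, which forces $\Omega^W_k$ into the charge--$0$ part and leaves the previous argument intact.) In all cases we reach a contradiction, so $\mathfrak m=I$, proving (1) and (2).

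For (3), I would verify the numerical hypotheses of Theorem \ref{decomp}. When $k=k^{(1)}_{m,n}$, admissibility gives $(n-1,m+1)=1$ by Lemma \ref{94}, so $\tfrac{m+1}{n-1}\in\Z$ would force $n-1\mid m+1$, hence $n=2$, i.e.\ $k=k^{(1)}_{p-1,2}$; away from that case $\tfrac{m+1}{n-1}\notin\Z$ and Theorem \ref{decomp} applies. When $k=k^{(2)}_{m,n}$, admissibility gives $(n+1,m)=1$, and $\tfrac m{n+1}\in\Z$ would force $n+1\mid m$, hence $n=0$, which is impossible, so Theorem \ref{decomp} applies unconditionally. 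For (4): $k^{(1)}_{p-1,2}=-\tfrac{p^2-1}{p}$ satisfies $k+h^\vee=\tfrac{p+1}{p}$ with $p+1=h^\vee$ and $(p,p+1)=1$, hence is admissible, and the decomposition of $W_k(sl(p+1),f_{p-1,2})$ is precisely the one in Theorem \ref{rp}.

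I expect the main obstacle to be the simplicity argument in (1): identifying the charge--$0$, conformal weight $2$ subspace of $\overline W_k(\g,x,f_{m,n})$ precisely enough to reduce to the classical classification of low conformal weight singular vectors of $V^{k_1}(sl(n))$, together with the handling of the case $m=3$, where the primary fields $G^\pm_i$ also lie in conformal weight $2$.
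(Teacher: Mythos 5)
Your reduction is, up to the key step, the paper's own route: admissibility plus Lemmas \ref{94}, \ref{95}, Remark \ref{good} and Theorem \ref{ave}/Proposition \ref{non-triv} give that the maximal ideal $\mathfrak m$ of $W^k(\g,x,f_{m,n})$ equals $H_f(J^k(\g))$ and is generated by a single singular vector $\Omega^W_k$ of conformal weight $2$; moreover $I=W^k\cdot\bar L\subseteq\mathfrak m$, and once $\mathfrak m=I$ is known, (2) follows from Theorem \ref{collnc} and (3),(4) from the same numerical checks and citations of Theorems \ref{decomp} and \ref{rp}. Where you diverge is in proving $\mathfrak m=I$. The paper does not argue via simplicity of $W^k/I$ and arbitrary ideals: it identifies $\Omega^W_k$ directly, using that the $\g$--highest weight of $\Omega_k$ is $(p'+1-h^\vee)\theta$ and that, with the choice \eqref{h-root}, $\theta$ vanishes on the Cartan of $\g^\natural$, so $\Omega^W_k$ has $\g^\natural$--weight $0$; then, $k_1$ being non-critical, the only weight--$2$, $\g^\natural$--weight--$0$ singular candidate is $\bar L$, hence $\mathfrak m=W^k\cdot\bar L=I$.

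Your substitute has genuine gaps. First, the claim that an arbitrary nonzero proper ideal $\mathfrak n$ of $\overline W_k$ has minimal $L(0)$--weight $1$ or $2$ does not follow from the vanishing of the low graded pieces of $\overline W_k$ (the ideal generated by the images of the $G^\pm_i$ would have minimal weight $\tfrac{m+1}{2}$); to obtain a weight--$\le 2$ vector you must specialize to $\mathfrak n=\mathfrak m/I$ and use that $\mathfrak m$ is generated by the weight--$2$ vector $\Omega^W_k$ — an ingredient you state earlier but never invoke in the simplicity argument. Even after that repair, for $m=2$ the minimal weight of $\mathfrak m/I$ could a priori be $\tfrac32$, a case your dichotomy omits, and for $m=3$ your patch rests on the unproved assertion that the $G^\pm_i$ are not singular vectors of $W^k$; this is exactly the point where the paper's computation of the $\g^\natural$--weight of $\Omega^W_k$ (charge $0$) is needed, and it is the idea missing from your argument. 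Finally, the classification ``a singular vector of conformal weight $\le2$ in $V^{k_1}(sl(n))$ forces $k_1\in\{1,-\tfrac n2\}$'' is asserted without proof, and since $\mathcal V(\g^\natural)$ is only a quotient of $V^{k_0}(\C\varpi)\otimes V^{k_1}(sl(n))$ you would also have to exclude subsingular vectors (compare the corresponding step in the proof of Theorem \ref{collnc}). So the skeleton is right, but as written the central identification $\mathfrak m=I$ is not established; the paper's $\g^\natural$--weight argument for $\Omega^W_k$ is the cleanest way to close these holes uniformly in $m$.
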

 
 \begin{proof}
 % {\color{blue} Assuming that $f_{m,n} \in \overline{\mathbb O}_{m+1} \bigcap  \overline{\mathbb O}_{m} $.}
 
Let $k=k_{m,n} ^{(1)}$  { be admissible. Then, by Lemma  \ref{94},  $\overline{\mathbb O}_{k}=N_{m+1}$, hence, by Lemma  \ref{95},
$f_{m,n}\in \overline{\mathbb O}_{k}$. In Remark \ref{good} we exhibited a good grading for $f_{m.n}$. Therefore, by Proposition \ref{non-triv} we have that  $H_f(V_k(\g))$ is zero or $W_k(\g,x,f_{m,n})$. By Lemma \ref{94} the former possibility is excluded, since 
$d^W_k=2$. In particular, it follows   that } the maximal ideal in $W^k(\g, x, f_{m,n})$ is generated by a singular vector $\Omega^W_k$ of conformal weight $2$. Since the highest  weight of $\Omega_k$ with respect to $\g$ is $(p+1-h^{\vee}) \theta$, using (\ref{h-root})  one shows that the  singular vector $\Omega_k^{W}$ must have $\g^{\natural}$--weight $0$. Since $k_1$ is not critical, the only possible candidate for a singular vector of conformal weight $2$ of $\g^{\natural}$--weight $0$ is $\bar L$. Therefore $\Omega_k ^{W} = \bar L$. This implies that the maximal ideal is $W^k(\g, x, f_{m,n}) $ is
$ W^{k}(\g, x,f_{m,n}) . \bar L$, and thus 
  $W_k(\g, x,f_{m,n}) = \overline W_k(\g, x, f_{m,n})$, proving (1). Therefore, {by Theorem \ref{collnc},} $k$ is non-collapsing. Since $n >2$  and  $(n-1,m+1) =1$, we conclude that $\frac{m+1}{n-1} \notin {\Z}$. Therefore the assumptions of Theorem \ref{decomp} are satisfied and we get assertion (3).
The case $k=k_{m,n} ^{(2)}$ is dealt with along the same lines.
 
 \end{proof}

  \section{The rectangular case and the Arakawa-Van Ekeren-Moreau question}\label{rectanguar}

 Let $f=f_{[m,q]}=diag(\underbrace{J_q,\ldots,J_q}_{\text{$m$ times}})$ be the nilpotent element of $sl(qm)$ with Jordan block decomposition corresponding to the partition $(q^m)$. Let $x=diag(\underbrace{\tfrac{q-1}{2},\tfrac{q-3}{2},\ldots,\tfrac{1-1}{2}}_{\text{$m$ times}})$ be the corresponding Dynkin element, whose weighted Dynkin diagram is 
$$ (\underbrace{\underbrace{0,\ldots,0}_{m-1},1,\underbrace{0,\ldots,0}_{m-1},1,\ldots\ldots,\underbrace{0,\ldots,0}_{m-1},1}_{q-1},\underbrace{0,\ldots,0}_{m-1}).$$
Using formula \eqref{calcolodim} to obtain Table 3, we get 
 \begin{equation}\label{gf}\g^f=\left(\begin{array}{c|c|c}
A_{11}&\ldots&A_{1m}\\
\hline
\vdots&\cdots&\vdots\\
\hline
A_{m1}&\ldots&A_{mm}
 \end{array}\right)\end{equation}
 where $$A_{ij}=\a^{ij}_0 Id+\a^{ij}_1J_q+\ldots+\a^{ij}_{q-1}J_q^{q-1}.$$
 Indeed, the matrices in the r.h.s. of \eqref{gf} are contained in $\g^f$; comparing dimensions we have the equality. Moreover
\begin{align}\label{r1}\g^\natural&=\left(\begin{array}{c|c|c}
\a^{11}_0Id&\ldots&\a^{1m}_0Id\\
\hline
\vdots&\ldots&\vdots\\
\hline
\a^{m1}_0Id&\ldots&-\sum_{j=1}^{m-1}\a^{jj}_0Id
 \end{array}\right)\cong sl(m),\\\notag
  \\\label{r2}
\g^f_{-r}&=\left(\begin{array}{c|c|c}
\a^{11}_rJ_q^r&\ldots&\a^{1m}_rJ_q^r\\
\hline
\vdots&\ldots&\vdots\\
\hline
\a^{m1}_rJ_q^r&\ldots&\a^{mm}_rJ_q^r
 \end{array}\right)\cong gl(m), \ \ 1\leq r\leq q-1.\end{align}

 \begin{table} 
\vskip 5pt
\begin{tabular}{c|c | c}
 & $\dim\mathfrak g_{-j}$ &$\dim\mathfrak g_{-j}^f$\\
\hline
$1\leq j\leq q-1$&$m^2(q-j)$&$m^2$\\\hline
$j=0$&$m^2q-1$&$m^2-1$\\
\end{tabular}\vskip 5pt\vskip 5pt\vskip 5pt
\caption{}
\end{table}

Applying Theorem \ref{structure} we obtain
 \begin{theorem}\label{genrect}% Assume that $k$ is not critical, i.e. $k \ne -n-m $. 
One can choose strong generators for the vertex algebra  $W^k(\g,x, f_{[m,q]})$ as follows: 
\begin{itemize}
\item[(1)]  $J^{\{a\}},\,a\in\g^{\natural}\cong sl(m)$; these generators are primary  for $L$ of conformal weight $1$;
\item[(2)]the Virasoro field $L$;
\item[(3)]   fields $W_i$, $3\leq i\leq  q,$ of conformal weight $i$;
\item[(4)]   fields $G^{j,s}_i$, $2\leq i\leq  q,\ 1\leq s,j \leq m^2-1$, of conformal weight $i$.
\end{itemize}
The action of $\g^\natural$ on $W_i$ (resp. $G^{j,s}_i$) is trivial (resp. adjoint).
\end{theorem}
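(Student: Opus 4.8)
The plan is to run the argument used for Theorem \ref{41}: everything will follow from Theorem \ref{structure} once the structure of $\g^f$ as a $\C D\oplus\g^\natural$--module is made explicit, and that structure is exactly what \eqref{gf}, \eqref{r1}, \eqref{r2} and Table 3 record. First I would check the hypothesis of Theorem \ref{structure}, namely that $W^k(\g,x,f_{[m,q]})$ is completely reducible as a $\g^\natural$--module. This is immediate here, and in fact simpler than in the hook case: $\g^\natural\cong sl(m)$ is semisimple, each homogeneous component $W^k(\g,x,f_{[m,q]})(s)$ is finite dimensional and $\g^\natural$--stable, and finite dimensional modules over a semisimple Lie algebra are completely reducible; since $\g^\natural$ has no center, no analogue of the $J^{\{\varpi\}}_0$--analysis of Theorem \ref{41} is needed. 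Theorem \ref{structure} then supplies a $\C D\oplus\g^\natural$--module isomorphism $\Psi\colon S(\widehat{\g^f})\to W^k(\g,x,f_{[m,q]})$ with $\Psi(\g^f)$ strongly and freely generating the vertex algebra and $\Psi(f)=L$.

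Next I would decompose $\g^f=\bigoplus_{r=0}^{q-1}\g^f_{-r}$ as a $\C D\oplus\g^\natural$--module. By \eqref{r1}, $\g^f_0=\g^\natural$ is the adjoint $sl(m)$--module; by \eqref{r2}, for $1\le r\le q-1$ the centralizing action of $\g^\natural$ on $\g^f_{-r}\cong gl(m)$ is the adjoint action of $sl(m)\subset gl(m)$, so $\g^f_{-r}$ splits as an adjoint copy plus the trivial summand $\C I_m$ of scalar matrices. Under the identification $\g^f_{-r}\ni a\leftrightarrow a\otimes t^{-1-r}\in\widehat{\g^f}$, the element $a$ has $D=-t\tfrac{d}{dt}$--eigenvalue $r+1$, hence $\Psi(a)$ has conformal weight $r+1$. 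Reading this off yields the four families: $\Psi(\g^f_0)$ gives the weight--one currents $J^{\{a\}}$, $a\in\g^\natural\cong sl(m)$, transforming adjointly, and their primarity is part of \cite[Theorem 2.1]{KW}; the trivial summand of $\g^f_{-1}$ is exactly the line of scalar multiples of $f=\mathrm{diag}(J_q,\dots,J_q)$, and since $\Psi(f)=L$ it is sent to $\C L$, which is why no $\g^\natural$--invariant generator of conformal weight $2$ is listed separately; the trivial summands of $\g^f_{-r}$ for $2\le r\le q-1$ give the fields $W_i$ of conformal weight $i=r+1\in\{3,\dots,q\}$ on which $\g^\natural$ acts trivially; and the adjoint summands of $\g^f_{-r}$ for $1\le r\le q-1$ give, for each $i=r+1\in\{2,\dots,q\}$, an adjoint $sl(m)$--copy of $m^2-1$ fields $G_i$ on which $\g^\natural$ acts adjointly. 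Since $\Psi$ is $\g^\natural$--equivariant, these are precisely the asserted transformation properties.

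I do not expect a genuine obstacle: the structural heavy lifting is entirely inside Theorem \ref{structure}, and the module decomposition of $\g^f$ has already been carried out in \eqref{gf}--\eqref{r2}. The only points that require a little care are keeping track of the shift $r\mapsto r+1$ relating the $D$--grading of $\widehat{\g^f}$ to conformal weight, the bookkeeping identifying the trivial summand of $\g^f_{-1}$ with $\C f$ so that listing $L$ separately is consistent with $\Psi(f)=L$, and quoting \cite{KW} for the primarity of the weight--one currents $J^{\{a\}}$.
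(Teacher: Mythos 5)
Your proposal is correct and follows essentially the same route as the paper: the paper's own proof consists of the explicit description of $\g^f$, $\g^\natural$ and $\g^f_{-r}$ in \eqref{gf}--\eqref{r2} (with the dimension count of Table 3) followed by a direct application of Theorem \ref{structure}, exactly as you do. Your added remarks — that complete reducibility is automatic here because $\g^\natural\cong sl(m)$ is semisimple and each graded piece is finite dimensional (so no analogue of the $J^{\{\varpi\}}_0$ analysis from the hook case is needed), and that the trivial summand of $\g^f_{-1}$ is $\C f$, accounted for by $\Psi(f)=L$ — are precisely the details the paper leaves implicit.
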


%Then 
 %\begin{equation}\label{decrect}\g^\natural \cong sl(m),\quad\g^f_i\cong gl(m)\text{ for $i=1,\ldots,q-1$.}\end{equation}

According to \cite{KW}, the central charge for $L$ in $W^k(sl(mq),f_{[m,q]},x)$ is 
$$
C(k)=\frac{k \left(m \left(q-q^3\right)
   (k+m q)+m^2 q^2-1\right)}{k+m
   q}-m^2 q \left(q^3-2 q^2+1\right).
$$
The hypothesis of Theorem \ref{Criterion} are satisfied since we can choose $S\cap W^k(\g,f_{[m,q]},x)(2)$ to be, with the notation of Theorem \ref{structure}, $\Psi(sl(m))$.
Thus the embedding $\mathcal V(\g^{\natural}) \hookrightarrow W_k(\g, x,f_{[m,q]})$ is conformal if and only if $k$ is a solution of the equation $C(k)=c_{\g^\natural}$.
One readily computes that 
$$
c_{\g^\natural}=\frac{\left(m^2-1\right) q (k+m
   q-m)}{q (k+m q-m)+m},
$$
hence the conformal levels are
\begin{equation}\label{levels}
k^{[1]} _{m,q}=-\frac{m q^2}{q+1},\ k^{[2]}_{m,q}=\frac{-m q^2+m q-1}{q}
,\ k^{[3]} _{m,q}=\frac{-m q^2+m q+1}{q}.
\end{equation}

\begin{theorem} \label{collapsing-rectangular}
Levels $k^{[i]}_{m,q}$, $i=1,2,3$ are collapsing for all $q\ge 2$ and $m\ge 2$,  except possibly level $k^{[2]}_{2,q}$.
More precisely:
\bea 
W_{k^{[1]} _{m,q} }(sl(mq),x,f_{[m,q]})&=&V_{-\frac{m q}{q+1}}(sl(m)),  \label{coll-k1}\\
W_{k^{[2]} _{m,q} }(sl(mq),x,f_{[m,q]})&=&V_{-1}(sl(m)),\ m\ge 3,  \label{coll-k2}   \\
W_{k^{[3]} _{m,q} }(sl(mq),x,f_{[m,q]})&=&V_{1}(sl(m)). \label{coll-k3} 
\eea
\end{theorem}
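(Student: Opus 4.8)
The plan is to run the abstract criteria of Theorems~\ref{Criterion} and \ref{Criterion2} for $W=W^k(\g,x,f_{[m,q]})$ with $\aa=W(1)\cong\g^\natural\cong sl(m)$, in complete analogy with the hook case. First I would fix the strong generators provided by Theorem~\ref{genrect}: the affine subalgebra $V(\g^\natural)=V(sl(m))$, the Virasoro vector $L$, the fields $W_i$ ($3\le i\le q$) on which $\g^\natural$ acts trivially, and, for each $i$ with $2\le i\le q$, one copy of the adjoint $\g^\natural$-module made of the $G$-fields of conformal weight $i$. Here one uses that $\g^f_{-r}\cong gl(m)=sl(m)\oplus\C$ as a $\g^\natural$-module for $1\le r\le q-1$, the trivial summand being $\C f_{[m,q]}$, which for $r=1$ is carried by $\Psi$ to $L$ itself, so no separate generator $W_2$ occurs. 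Set $S=\mathrm{span}(\{W_i\}\cup\{G\text{'s}\})$; it is $\aa$-stable and $L(0)$-stable, and $\aa$ acts completely reducibly since $sl(m)$ is semisimple and each graded piece is finite dimensional. The hypotheses \eqref{h1}--\eqref{h3} hold with $\phi$ the conjugation with respect to $sl(mq,\R)$ (so $\phi(x)=x$, $\phi(f)=f$, $\phi(L)=L$), by the dimension data in Table~3, and by Theorem~\ref{genrect}. Finally, the only generators lying in $S\cap W(2)$ are the weight-$2$ $G$'s, which sit in the adjoint $\g^\natural$-isotypic component of $W(2)$; for such $Y$ one has $(L-L^\aa)(2)Y\in W(0)=\C\vac$, and its coefficient equals $\pm(Y,L-L^\aa)$ in the invariant Hermitian form, which vanishes because $L$ and $L^\aa$ are $\g^\natural$-invariant (hence lie in the trivial isotypic component, orthogonal to $Y$). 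Thus the hypotheses of Theorem~\ref{Criterion} are satisfied.

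Next, conformality: equating the central charge $C(k)$ of $W^k(\g,x,f_{[m,q]})$ with $c_{\g^\natural}$ yields precisely the three levels $k^{[i]}_{m,q}$ of \eqref{levels}, so by Theorem~\ref{Criterion} the embedding $\mathcal V(\g^\natural)\hookrightarrow W_{k^{[i]}_{m,q}}(\g,x,f_{[m,q]})$ is conformal. Comparing $c_{\g^\natural}$ with the Sugawara central charge $\tfrac{k_1(m^2-1)}{k_1+m}$ of $V^{k_1}(sl(m))$ identifies the level of the affine subalgebra as $k_1=q(k+mq-m)$, which evaluates to $-\tfrac{mq}{q+1}$, $-1$ and $1$ at $k^{[1]}$, $k^{[2]}$ and $k^{[3]}$ respectively; note that $k_1+m\ne0$ in all three cases, so $V^{k_1}(sl(m))$ is non-critical.

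Now the collapsing statement, via Theorem~\ref{Criterion2}. Decompose $S$ into irreducible $\aa$-modules: each $W_i$ contributes a trivial module with $C_i=0$ and $\D_i=i\ge3$, so $C_i\ne\D_i$; each adjoint module at weight $i$ has $\D_i=i\in\{2,\dots,q\}$ and, by \eqref{Cj} with the Casimir eigenvalue $2h^\vee=2m$ on the adjoint and $h^\vee=m$, one gets $C_i=\tfrac{m}{k_1+m}$. A short computation gives $C_i=q+1$ at $k^{[1]}$, $C_i=\tfrac{m}{m-1}$ at $k^{[2]}$, and $C_i=\tfrac{m}{m+1}$ at $k^{[3]}$; each of these differs from every $i\in\{2,\dots,q\}$, because $q+1>q$ in the first case, $\tfrac{m}{m+1}<1<2$ in the third, and $\tfrac{m}{m-1}<2$ in the second precisely when $m\ge3$. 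This last point is exactly why the pair $(m,\text{level})=(2,k^{[2]}_{2,q})$ must be excluded: there $\tfrac{m}{m-1}=2=\D_i$ for $i=2$, so the criterion gives no information. Away from that case, $C_i\ne\D_i$ for all $i\in\mathcal J$, hence $\mathcal K=\emptyset$ and Theorem~\ref{Criterion2} gives $\overline W_k(\g,x,f_{[m,q]})\cong\widetilde V(\aa)$, a quotient of $V^{k_1}(sl(m))$. Since at a conformal level $\bar L=L-L^\aa$ lies in the kernel of the invariant form, which equals the maximal ideal, $I=W^k(\g,x,f_{[m,q]})\cdot\bar L$ is contained in that maximal ideal, so $W_k(\g,x,f_{[m,q]})$ is a quotient of $\overline W_k(\g,x,f_{[m,q]})$, hence a simple quotient of $V^{k_1}(sl(m))$; as $k_1$ is non-critical this forces $W_k(\g,x,f_{[m,q]})=V_{k_1}(sl(m))$, i.e.\ the three identities \eqref{coll-k1}--\eqref{coll-k3}, and in particular each $k^{[i]}_{m,q}$ is collapsing.

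The genuinely delicate points are, first, verifying that $(L-L^\aa)(2)$ annihilates the weight-$2$ generators --- this rests on the $\g^\natural$-isotypic orthogonality of the invariant Hermitian form and is exactly what makes Theorem~\ref{Criterion} applicable here --- and, second, tracking the borderline equality $\tfrac{m}{m-1}=2$ at $m=2$, which shows that collapsing of $W_{k^{[2]}_{2,q}}$ cannot be reached by this argument and must be left open. The remaining ingredients --- the $\g^\natural$-module description of $\g^f$, the central-charge bookkeeping, and the arithmetic of Casimir eigenvalues --- are routine given Theorems~\ref{structure} and \ref{genrect}.
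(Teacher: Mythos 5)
Your proposal is correct and follows essentially the same route as the paper: the structural Theorem \ref{genrect}, the conformality criterion of Theorem \ref{Criterion} (with $S\cap W(2)$ the adjoint copy $\Psi(sl(m))$), and then Theorem \ref{Criterion2} via the comparison $C_i=\tfrac{m}{k_1+m}$ versus $\D_i\in\{2,\dots,q\}$, which yields $q+1$, $\tfrac{m}{m-1}$, $\tfrac{m}{m+1}$ at the three levels and forces the exclusion of $k^{[2]}_{2,q}$. Your explicit verification that $(L-L^\aa)(2)$ kills the weight-two adjoint generators and your passage from $\overline W_k\cong\widetilde V(\aa)$ to $W_k=V_{k_1}(sl(m))$ merely spell out steps the paper leaves implicit.
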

\begin{proof} We use  Theorem \ref{Criterion2}. Recall that $k_1=qk+mq^2-mq$.  Also remark that, by \eqref{r2}, $\g^f$ decomposes as a sum of trivial 
and adjoint representations. Since $\g^\natural$ is simple, we have  (cf. \eqref{Cj})
$$C_i=\begin{cases}0\quad&\text{if $S_i$ is trivial,}\\\frac{2m}{2(k_1+m)}\quad&\text{if $S_i$ is adjoint.}\end{cases}$$ 
Assume $C_i\ne 0$. If $k=k^{[1]}_{m,q} $, then $C_i=q+1$, which cannot equate  $\D_i$ (see Theorem \ref{Criterion2}), since the maximal conformal weight of a generator  is $q$.
If $k=k^{[2]}_{m,q}$, then $C_i=\tfrac{m}{m-1}$, which might be integral iff $m=2$. If $k=k^{[3]}_{m,q} $, then $C_i=\tfrac{m}{m+1}$, which is never  integral.
\end{proof}
 \begin{remark} Note that 
 \begin{itemize}
 \item 
  $k^{[1]} _{m,q}$ is admissible if and only if $(m, q+1) = 1$. 
  \item $k^{[2]} _{m,q}$ is never admissible. The result (\ref{coll-k2})  gives a positive answer to a  question by T. Arakawa, J. van Ekeren and  A. Moreau  \cite[Remark 8.9]{AEM} in  the case $m \ge 3$.
  \item $k^{[3]} _{m,q}$ is always admissible. The result (\ref{coll-k3}) is proved by different methods in \cite{AEM}. 
 \end{itemize}
  
 \end{remark}

 Theorem \ref{collapsing-rectangular} cannot be applied for $m=2$. 
In the case $m=q=2$,  $f_{[m,q]}$ coincides with  the short nilpotent element $f_{sh}$. It is proved in \cite{AMP-21},   using explicit OPE formulas, that
$W_{-\tfrac{5}{2}} (sl(4), f_{sh})$ is isomorphic to an orbifold of the rank two Weyl vertex algebra. So $k^{[2]}_{2,2} = -5/2$ is not collapsing.

\begin{conj} Level $k = k^{[2]}_{2,q}=\frac{-2 q^2+2 q-1}{q}$ is not collapsing  for $q \ge 2$  and
$$W_{k} (sl(2q),f_{[2,q]}, x) = W_{-\tfrac{5}{2}} (sl(4), f_{sh}). $$
\end{conj}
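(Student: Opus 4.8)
The plan is to pin down the structure of $\overline W_k(sl(2q),x,f_{[2,q]})$ at $k=k^{[2]}_{2,q}$ as tightly as the machinery of Section \ref{rectanguar} and Section 3 allows, and then to match it with the explicitly known vertex algebra $W_{-\tfrac{5}{2}}(sl(4),f_{sh})$ of \cite{AMP-21}. First one records the numerology: since $k_1=qk+mq^2-mq$, taking $m=2$ gives $k_1=-1$, so $\mathcal V(\g^\natural)$ is a quotient of $V^{-1}(sl(2))$, and a direct computation with \eqref{cc} (equivalently with the formula for $C(k)$ in Section \ref{rectanguar}) shows that $C(k^{[2]}_{2,q})=c_{\g^\natural}=-3$ \emph{for every} $q\ge 2$; in particular, by Theorem \ref{Criterion}, the embedding $\mathcal V(\g^\natural)\hookrightarrow\overline W_k$ is conformal. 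Applying Theorem \ref{Criterion2} together with Theorem \ref{genrect}, the trivial generators $W_i$ ($3\le i\le q$) have $C_i=0\neq i=\D_i$, while the adjoint generators have $C_i=\tfrac{m}{m-1}=2$, so $C_i=\D_i$ forces $i=2$; hence $\overline W_k$ is strongly generated by $\mathcal V(\g^\natural)$ together with \emph{at most} one weight--two multiplet $\{G^a\}$ transforming in the adjoint of $sl(2)$. Two things then remain: that this multiplet survives (so that $k$ is not collapsing), and the identification of the resulting algebra.

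For the non--collapsing statement I would argue as in the proof of Theorem \ref{collnc}. Choose $\{G^a\}$ to be primary for $L$ (a finite linear--algebra adjustment in the weight--two subspace of $W^k(sl(2q),x,f_{[2,q]})$, using that the relevant operators are $\g^\natural$--equivariant there). Then $\bar L(0)G^a=(\D_i-C_i)G^a=0$, while $\bar L(1)G^a$ lies in $I\cap\mathcal V(\g^\natural)$ in conformal weight $1$, which is zero since otherwise $I$ would not be proper; for $m\ge 2$ one has $\bar L(m)G^a=0$ by degree reasons, so $(G^a)_j\bar L=0$ for all $j\ge 1$ exactly as in \eqref{t}. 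Combined with the standard computations for $L$, for $\mathcal V(\g^\natural)$, and for the $W_p$ (degree bounds and properness of $I$), this shows that $\bar L$ is a singular vector of $W^k(sl(2q),x,f_{[2,q]})$, so $I$ is a highest--weight quotient; a weight count then shows that the weight--two part of $I$ equals $\C\bar L$, whence $G^a\notin I$. Therefore $k^{[2]}_{2,q}$ is not collapsing and $\overline W_k$ is strongly generated by $\mathcal V(\g^\natural)\oplus\{\pi_I(G^a)\}$, of central charge $-3$ with $k_1=-1$ --- data independent of $q$.

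The last and hardest step is the identification $\overline W_k\cong W_{-\tfrac{5}{2}}(sl(4),f_{sh})$. Both are $\tfrac12\Z_{\ge 0}$--graded vertex algebras of central charge $-3$, strongly generated by $V_{-1}(sl(2))$ (or a quotient) and a primary weight--two adjoint triplet $G^a$, with the remaining elements spanned by normally ordered products $:G^aG^b:$ and derivatives of the $J^a$. The plan is to prove that the $\lambda$--bracket algebra of such data is rigid: $sl(2)$--covariance and the grading restrict $[G^a{}_\lambda G^b]$ and $[G^a{}_\lambda J^b]$ to finitely many structure constants (with the normalization of $G^a$ the only free parameter), and imposing the Jacobi identity for $\lambda$--brackets forces these constants; crucially the resulting equations involve only $k_1=-1$ and $c=-3$, hence are the same for all $q$ and are already solved by the explicit OPEs of $W_{-\tfrac{5}{2}}(sl(4),f_{sh})$ determined in \cite{AMP-21} (the $q=2$ instance). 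Uniqueness of the solution then gives an isomorphism $\overline W_k\cong W_{-\tfrac{5}{2}}(sl(4),f_{sh})$, and simplicity of the latter yields $W_k(sl(2q),x,f_{[2,q]})=\overline W_k$, proving the conjecture.

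The main obstacle is precisely this rigidity step: it is the kind of explicit $\lambda$--bracket bookkeeping carried out in \cite{CL} (Theorem 9.5) for hook type and in \cite{AMP-21} for $q=2$, but here one must control an entire strongly generated vertex algebra and show that the moduli of consistent structures with the given top--level data reduces to a single point; moreover, establishing that $\overline W_k$ is simple may itself require this explicit structure, so the two tasks are intertwined. An alternative route would be to realize $W_{k^{[2]}_{2,q}}(sl(2q),x,f_{[2,q]})$ by a coset or free--field construction matching \cite{AMP-21}. In any case, the asymptotic--data techniques of \cite{AEM} are unavailable here, since $k^{[2]}_{2,q}$ is non--admissible --- which is, in fact, exactly what makes the statement subtle.
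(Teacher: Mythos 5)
You are attempting to prove a statement that the paper itself only states as a conjecture: the paper offers no proof, its only evidence being the case $q=2$, where $f_{[2,2]}=f_{sh}$ and the explicit OPE computation of \cite{AMP-21} identifies $W_{-5/2}(sl(4),f_{sh})$ with an orbifold of the rank two Weyl vertex algebra. Your preliminary steps are correct and consistent with the paper's machinery: $k_1=-1$, $C(k^{[2]}_{2,q})=c_{\g^\natural}=-3$ for all $q$, and Theorems \ref{Criterion}, \ref{Criterion2}, \ref{genrect} do show that $\overline W_k$ is strongly generated by $\mathcal V(\g^\natural)$ together with at most the weight-two adjoint triplet (this is exactly why $m=2$ is excluded in Theorem \ref{collapsing-rectangular}). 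But your non-collapsing step contains a genuine logical gap: an argument in the style of Theorem \ref{collnc} showing $G^a\notin I=W^k\cdot\bar L$ only proves that the level is not \emph{strongly} collapsing, i.e. $\overline W_k\neq\mathcal V(\g^\natural)$. It does not prove the conjecture, because the simple quotient $W_k$ may kill $\pi_I(G^a)$; the paper's Proposition \ref{collapsing-3p2} exhibits precisely this phenomenon ($k^{(2)}_{3p,2}$ is not strongly collapsing by Theorem \ref{collnc}, yet it is collapsing). The paper's only technique for excluding this — Theorem \ref{non-collapsing-admissible}, which uses that at admissible levels the maximal ideal of $V^k(\g)$ is generated by a single singular vector and that $H_f$ is exact — is unavailable here since $k^{[2]}_{m,q}$ is never admissible. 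So the sentence ``Therefore $k^{[2]}_{2,q}$ is not collapsing'' does not follow from what precedes it.

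Your proposed repair — a rigidity theorem identifying $\overline W_k$ with the simple vertex algebra $W_{-5/2}(sl(4),f_{sh})$, which would indeed close the gap since simplicity of $\overline W_k$ would force $\overline W_k=W_k$ — is only a plan, and it is exactly the hard, unproven content of the conjecture. You give no argument that $sl(2)$-covariance, the grading, $k_1=-1$ and $c=-3$ determine the $\lambda$-brackets $[G^a{}_\lambda G^b]$ uniquely (the weight-$\le 3$ closure involves many composite fields, and a priori there could be inequivalent consistent structures, or a one-parameter family whose members have different simple quotients); moreover, even granting uniqueness of a ``consistent structure'', one still has to know that $\overline W_k$ for general $q$ is isomorphic to that structure rather than to a degeneration of it, which again touches simplicity. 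There are also smaller unjustified points (e.g. primarity of the weight-two adjoint generators cannot be obtained by the charge argument used in the hook case, since $\g^\natural=sl(2)$ has no center, though a correction by $TJ^a$-type composites is plausible). In short: the framework and numerology are right and match the paper's Section \ref{rectanguar}, but the two essential steps — non-collapsing beyond ``not strongly collapsing'', and the identification with $W_{-5/2}(sl(4),f_{sh})$ — are missing, which is precisely why the statement remains a conjecture in the paper.
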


  \vskip5pt
   \footnotesize{
  \noindent{\bf D.A.}:  Department of Mathematics, Faculty of Science, University of Zagreb, Bijeni\v{c}ka 30, 10 000 Zagreb, Croatia;
{\tt adamovic@math.hr}

\noindent{\bf P.MF.}: Politecnico di Milano, Polo regionale di Como,
Via Anzani 42, 22100 Como,\newline
Italy; {\tt pierluigi.moseneder@polimi.it}

\noindent{\bf P.P.}: Dipartimento di Matematica, Sapienza Universit\`a di Roma, P.le A. Moro 2,
00185, Roma, Italy;\newline {\tt papi@mat.uniroma1.it}
}
 \end{document}